\newtheorem{theorem}{Theorem}[section]
\newtheorem{corollary}{Corollary}
\newtheorem{lemma}[theorem]{Lemma}
\newtheorem{proposition}{Proposition}
\theoremstyle{definition}
\newtheorem{definition}[theorem]{Definition}
\newtheorem{remark}{Remark}
\title[Two-species coagulation]
      {Kinetic theory and numerical simulations of two-species coagulation}
\author[C. Escudero, F. Maci\`{a}, R. Toral, and J. J. L. Vel\'azquez]{}
\subjclass{Primary: 35C20, 35Q20; Secondary: 82C22, 82C80.}
 \keywords{Smoluchowsky equations, self-similar asymptotics, coagulation, generating functions, numerical experiments.}
 \email{cel@icmat.es}
 \email{fabricio.macia@upm.es}
 \email{raul@ifisc.uib-csic.es}
 \email{velazquez@iam.uni-bonn.de}
\thanks{This work has been
partially supported by the MICINN (Spain) and FEDER (EU) through
Projects No. RYC-2011-09025, No. MTM2010-18128,  MTM2007-61755,
MTM2008-03754 and FIS2007-60327.}
\begin{document}
\maketitle

\centerline{\scshape Carlos Escudero }
\medskip
{\footnotesize
 \centerline{Departamento de Matem\'{a}ticas \& ICMAT (CSIC-UAM-UC3M-UCM)}
   \centerline{Universidad Aut\'{o}noma de Madrid, Ciudad Universitaria de Cantoblanco}
   \centerline{28049 Madrid, Spain}
} 

\medskip

\centerline{\scshape Fabricio Maci\`{a} }
\medskip
{\footnotesize
 \centerline{Universidad Polit\'{e}cnica de Madrid}
   \centerline{ETSI Navales, Avda. Arco de la Victoria s/n}
   \centerline{28040 Madrid, Spain}
} 

\medskip

\centerline{\scshape Ra\'ul Toral }
\medskip
{\footnotesize
 \centerline{IFISC (Instituto de F\'{\i}sica Interdisciplinar y Sistemas Complejos)}
   \centerline{CSIC-UIB, Campus UIB}
   \centerline{07122 Palma de Mallorca, Spain}
} 

\medskip

\centerline{\scshape Juan J. L. Vel\'{a}zquez }
\medskip
{\footnotesize
 \centerline{Hausdorff Center for Mathematics}
   \centerline{Rheinischen Friedrich-Wilhelms-Universit\"{a}t Bonn}
   \centerline{53115 Bonn, Germany}
} 

\bigskip

 \centerline{(Communicated by the associate editor name)}

\begin{abstract}
In this work we study the stochastic process of two-species
coagulation. This process consists in the aggregation dynamics
taking place in a ring. Particles and clusters of particles are set
in this ring and they can move either clockwise or counterclockwise.
They have a probability to aggregate forming larger clusters when
they collide with another particle or cluster. We study the
stochastic process both analytically and numerically. Analytically,
we derive a kinetic theory which approximately describes the process
dynamics. One of our strongest assumptions in this respect
is the so called well--stirred limit, that allows neglecting the
appearance of spatial coordinates in the theory, so this becomes
effectively reduced to a zeroth dimensional model.
We determine the long time behavior of such a model, making emphasis
in one special case in which it displays self-similar solutions.
In particular these calculations
answer the question of how the system gets ordered, with all
particles and clusters moving in the same direction, in the long
time. We compare our analytical results with direct numerical
simulations of the stochastic process and both corroborate its
predictions and check its limitations. In particular, we numerically
confirm the ordering dynamics predicted by the kinetic theory and
explore properties of the realizations of the stochastic process
which are not accessible to our theoretical approach.
\end{abstract}

\section{Introduction}\label{introduction}

The theoretical study of coagulation and their kinetic description
is of broad interest because of its vast applicability in diverse
topics such as aerosols \cite{seinfeld}, polymerization
\cite{ziff,sintes}, Ostwald ripening \cite{ls,meerson}, galaxies and
stars clustering \cite{silk}, and population biology \cite{niwa}
among many others. In this work we propose a generalization of the
stochastic process of coagulation. Herein we will consider the
coagulation process among two different species: the aggregation
will take place only when one element of one of the species
interacts with an element of the other species. In particular, we
will place particles and clusters of particles in a ring, where they
will move with constant speed, either clockwise or counterclockwise.
When two clusters (or two particles or one particle and one cluster)
meet they have the chance to aggregate and form a cluster containing
all particles involved in the collision. The direction of motion of
the newborn cluster is chosen following certain probabilistic rules.
We are interested in the properties of the realizations of such a
stochastic process, and in particular in their long time behavior.
Our main theoretical technique will be the use of kinetic equations,
an approach we have outlined in~\cite{emv}. This is of course just a
possible extension of the theory of coagulation. We have designed it
getting inspiration from self-organizing systems and in particular
from collective organism behavior. Let us note that this is a field
that has been studied using a broad range of different theoretical
techniques \cite{toralmarro,bertozzi,bodnar,orsogna,vicsek}. Another
field which has inspired ourselves is the study of the dynamics of
opinion formation and spreading~\cite{deffuant,toral,fortunato},
which is represented for instance by the classical voter
model~\cite{voter1,voter2,liggett}. As a final influence, we mention
that clustering has been previously studied in population dynamics
models~\cite{emilio} including swarming systems~\cite{clusterswarm},
and coagulation equations have been used in both
swarming~\cite{coaguswarm} and opinion formation
models~\cite{toral2}. Despite of its simplicity, the two-species
coagulation model could be related to some of these systems.

A particular system that has influenced the current developments
is the collective motion of locusts. The
experiment performed in \cite{buhl} revealed that locusts marching
on a (quasi one dimensional) ring presented a coherent collective
motion for high densities; low densities were characterized by a
random behavior of the individuals and intermediate densities
showed coherent displacements alternating with sudden changes of
direction. The models that have been introduced to describe this
experiment assume that the organisms behave like interacting
particles~\cite{buhl,yates,escudero}. Related interacting particle models
have been used to describe the collective behavior of many
different organisms and analyzing the mathematical properties of
such models has been a very active research
area~\cite{bertozzi,orsogna,vicsek}. The two-species coagulation model could
be thought of as a particular limit of some of these models or as a simplified
version of them which still retains some desirable features.

The goal of the current work is not to describe the detailed behavior of
any specific system. Instead, we will explore the mathematical properties
of a stochastic process which has been designed by borrowing inspiration from different
self-organizing systems. Therefore the focus will be on mathematical tractability.
We will determine under which conditions consensus is reached and what form it adopts.
In order to do this we introduce a kinetic theory that approximately describes the stochastic process.
One of our main assumptions is to consider that the system is well-stirred, so the explicit dependence on the
spatial coordinates can be dropped. This approximation is very common in other fields like in the
study of reaction-diffusion systems or Boltzmann equations. The stochastic process, which will be just qualitatively
described in this introduction, will be precisely introduced in section~\ref{numericalresults}.
The kinetic theory we will consider in order to theoretically describe this process, equation~\eqref{smoluchowsky} below,
will be heuristically introduced rather than derived as a proper limit of the stochastic process. Our present approach is
an {\it aposterioristic} one: we will compare the theoretical predictions of our kinetic equation with direct numerical simulations
of the stochastic process in order to check the validity of our theory.

In the following section we describe the kinetic theory that approximately describes the stochastic process under study in the
well-stirred limit. We build our progress here based on our previous developments in~\cite{emv}.
The basic analysis of the resulting kinetic equations is carried out in this section too, and
the results obtained are interpreted in the context of the underlying stochastic process. Part of the phenomenology of
the ordering dynamics is already unveiled at this point.
We devote the subsequent sections to a more detailed mathematical analysis of this kinetic theory. In particular,
we study the advent of self-similarity of the solutions to the kinetic equations. We interpret this self-similar
behavior as the formation of a giant cluster composed by all the particles in the system.

We postpone to the last section of the paper the direct numerical simulations
of the stochastic process. In this last section we both check the predictions of
our kinetic theory and explore the stochastic process beyond the kinetic level. Kinetic
approximations neglect many sources of fluctuations and thus numerical simulations are
required in order to describe many properties of the individual realizations of the stochastic
process which are not reflected at the kinetic level. It is also important to remark here that
the direct numerical simulations of the stochastic process do not assume the well-stirred limit.
So in these simulations the spatial distribution of particles and clusters is explicitly taken
into account. Let us mention again that the only reason why the well-stirred limit is considered
in the theoretical part of this work is in order to allow for mathematical tractability, so there is
no reason in imposing such a constraint in the numerical part. Furthermore, the numerical results
justify the theoretical assumption under appropriate hypothesis on the model parameters.

Our analysis will be limited to the one--dimensional spatial
situation with periodic boundary conditions (the dynamics is
taking place in a circumference). Our present approach will be
based on coagulation equations. This kinetic description is in general invalid
for one-dimensional systems because it is known that spatial correlations do propagate
in this dimensionality. So we assume a collision takes place when two clusters meet with a very small probability.
The probability should be so small that all the particles travel the whole system several times before one
collision happens on average. This way the system becomes well--stirred, and so we can neglect spatial correlations
and treat the system as if it were zero dimensional, what allows mathematical tractability. The rate at which the
collisions take place can be trivially absorbed by means of a rescaling of time, so we do not include it explicitly in
the formulation of the kinetic equations. It will however be explicitly taken into account when we perform direct
numerical simulations of the stochastic process in section \ref{numericalresults}.

We assume that the particles move in clusters of $\ell$ individuals
and $f^+(\ell,t)$ ($f^-(\ell,t)$) represents the number of clusters
of size $\ell$ moving clockwise (counterclockwise) at time $t$. We
note these functions should in principle only take integer values,
however the kinetic approximation implicitly averages over many
realizations, so they will be allowed to take any non-negative real
value. Clusters are modified when they collide with other clusters,
in such a way that the probability distributions obey the following
equations of motion:
\begin{eqnarray} \label{smoluchowsky}
\partial_t f^\mp(\ell,t) = \sum_{m,k,j=1}^\infty \left[ \Psi(k,j;m,\ell)
\delta_{k+j,m+\ell} f^\pm(k,t)f^\mp(j,t)- \right. \\ \left.
\Psi(m,\ell;k,j) \delta_{m+\ell,k+j}f^\pm(m,t) f^\mp (\ell,t)
\right], \nonumber
\end{eqnarray}
where $\Psi(k,j;m,\ell)$ is the collision kernel: It states the
probability with which a collision among clusters with $k$ and $j$
particles occurs and yields clusters with $m$ and $\ell$
particles.
One of our basic assumptions is its symmetry under
reflections $\Psi(k,j;m,\ell)=\Psi(j,k;\ell,m)$.
It is worth remarking that equation~\eqref{smoluchowsky} is not the exact description
of the coagulation process. It is in fact an approximation, but we will not derive it
from the stochatic process. Our approach will be phenomenological: we postulate the
validity of such a kinetic description and check it {\it a posteriori} by means of
numerical simulations. Let us however briefly comment on that it is reasonable postulating
this kinetic theory. The right hand side in the upper line of~\eqref{smoluchowsky} is the {\it gain}
term: it represents the possibility of formation of clusters of size $\ell$ from the collision of
a cluster of size $k$ and another one of size $j$; the sum takes into account all possible sizes $j$ and $k$.
The second line of this equation is the {\it loss} term: it takes into account the chance of disappearance of a cluster
of size $\ell$ by its coagulation with a cluster of any other size.

Our analytical
progress on this equation will be built by means of the
introduction of the generating functions
\begin{equation}
F^\pm (z,t)=\sum_{\ell=1}^\infty f^\pm(\ell,t) z^\ell.
\end{equation}
If we take the derivative these functions with respect to time we get
\begin{eqnarray}\label{substitution}
\partial_t F^\pm (z,t)=\sum_{\ell=1}^\infty \partial_t f^\pm(\ell,t) z^\ell = \\ \nonumber
\sum_{\ell,m,k,j=1}^\infty \left[ \Psi(k,j;m,\ell)
\delta_{k+j,m+\ell} f^\pm(k,t)f^\mp(j,t)- \right. \\ \nonumber \left.
\Psi(m,\ell;k,j) \delta_{m+\ell,k+j}f^\pm(m,t) f^\mp (\ell,t)
\right] z^\ell. \nonumber
\end{eqnarray}
As already mentioned, the focus of this paper is on mathematical tractability, and so we will only consider
kernels such that this expression becomes a closed system of differential equations for the generating functions.
Our two choices are described in the following.

\subsection{Random kernel}

In the present paper we will concentrate on two integrable
kernels. Our first choice is
\begin{eqnarray} \label{collisionkernel1}
\Psi(\ell,m;0,m+\ell) &=& \Psi(\ell,m;m+\ell,0)=1/2, \\
\Psi(k,j,\ell,m) &=& 0, \qquad \mathrm{otherwise}.
\label{collisionkernel2}
\end{eqnarray}
Note that instead of writing down the Kronecker deltas explicitly in
Eq. (\ref{smoluchowsky}) one could include them into the definition
of the collision kernel. In this case, instead of Eqs.
(\ref{collisionkernel1}) and (\ref{collisionkernel2}), we would have
$$\Psi(\ell,m,k,j)=\delta_{\ell+m,k+j}(\delta_{k,0}+\delta_{j,0})/2.$$
This kernel expresses the following process: when two clusters collide a single cluster
merges and it contains all particles of both. The newborn cluster selects its direction of motion,
that could be either clockwise or counterclockwise, with equal
probability. We will refer to (\ref{collisionkernel1}) as the ``random kernel''. Substituting this kernel
in~\eqref{substitution} we find for the generating functions the following differential
system
\begin{eqnarray} \label{ztransform1}
\partial_t F^+(z,t) &=& \frac{1}{2} F^+(z,t)F^-(z,t)-F^+(z,t)F^-(1,t), \\
\partial_t F^-(z,t) &=& \frac{1}{2} F^+(z,t)F^-(z,t) -
F^-(z,t)F^+(1,t).
\label{ztransform2}
\end{eqnarray}
The number of clusters traveling in either direction is given by
\begin{equation*}
N^\pm(t) = \sum_{\ell=1}^\infty f^\pm(\ell,t) = F^\pm(1,t).
\end{equation*}

\begin{lemma}\label{lemmans}
System~(\ref{ztransform1}) and~(\ref{ztransform2}) admits the conserved quantity
$$
N^+(t)-N^-(t),
$$
and if
$$
N^\pm(0) > N^\mp(0) \quad \text{then} \quad N^\mp(t) \to 0 \quad \text{as} \quad t \to \infty.
$$
\end{lemma}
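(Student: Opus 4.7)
The plan is to reduce the problem to a scalar ODE by evaluating the generating-function system at $z=1$. Since $N^\pm(t) = F^\pm(1,t)$, setting $z=1$ in (\ref{ztransform1}) and (\ref{ztransform2}) gives the autonomous system
\begin{equation*}
\dot{N}^+ = \tfrac{1}{2}N^+ N^- - N^+ N^- = -\tfrac{1}{2} N^+ N^-, \qquad \dot{N}^- = -\tfrac{1}{2} N^+ N^-.
\end{equation*}
Because the two right-hand sides coincide, subtraction immediately yields $\partial_t(N^+ - N^-) \equiv 0$, which is the claimed conservation law.

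For the asymptotic statement I would assume without loss of generality that $N^+(0) > N^-(0)$ and set $D := N^+(0) - N^-(0) > 0$. Conservation gives $N^+(t) = N^-(t) + D$ for all $t \geq 0$, so the equation for $N^-$ closes to the scalar logistic-type ODE
\begin{equation*}
\dot{N}^- = -\tfrac{1}{2}\,N^- (N^- + D)
\end{equation*}
on the invariant half-line $\{N^- \geq 0\}$. Integrating by separation of variables, using the partial-fraction decomposition $\frac{1}{u(u+D)} = \frac{1}{D}\bigl(\frac{1}{u}-\frac{1}{u+D}\bigr)$, produces the explicit solution
\begin{equation*}
N^-(t) = \frac{D\, N^-(0)}{(N^-(0)+D)\,e^{Dt/2} - N^-(0)},
\end{equation*}
which decays at exponential rate $D/2$ and hence tends to $0$ as $t \to \infty$.

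I do not anticipate any substantive obstacle. The argument hinges on the algebraic fact that at $z=1$ the gain term $\tfrac12 F^+F^-$ and the loss term $F^\pm F^\mp(1,t)$ combine into the \emph{same} right-hand side $-\tfrac12 N^+ N^-$ for both $N^+$ and $N^-$; this is a feature of the random kernel and not a property to be expected of general coagulation kernels. The only technical point worth noting is that global existence of the scalar solution on $[0,\infty)$ is automatic because $\dot{N}^- \leq 0$ keeps the trajectory bounded in $[0,N^-(0)]$, so no blow-up of the generating functions evaluated at $z=1$ can occur and the manipulations above are legitimate.
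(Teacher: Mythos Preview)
Your proof is correct and follows essentially the same route as the paper: evaluate the generating-function system at $z=1$, observe both right-hand sides equal $-\tfrac12 N^+N^-$ so their difference is conserved, substitute $N^+=N^-+D$ into the $N^-$ equation, and integrate the resulting scalar ODE explicitly. Your formula $N^-(t)=\dfrac{D\,N^-(0)}{(N^-(0)+D)e^{Dt/2}-N^-(0)}$ is identical to the paper's (since $N^-(0)+D=N^+(0)$ and $D=C_0$); the only cosmetic difference is that the paper labels the scalar ODE ``Bernoulli'' whereas you integrate by partial fractions.
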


\begin{proof}
The conservation law follows from substituting $z=1$ in~\eqref{ztransform1} and~\eqref{ztransform2} and substracting both equations.
Denoting $N^+(t)-N^-(t)=C_0$ and substituting this quantity back into the differential system we get
\begin{equation}
\partial_t N^-=-\frac{1}{2}N^-(C_0+N^-). \label{logistic}
\end{equation}
This is a Bernoulli equation that can be integrated to yield
\begin{equation}
N^-(t)=\frac{C_0 N^-(0)}{e^{C_0 t/2}N^+(0)-N^-(0)}.
\end{equation}
The conclusion follows immediately.
\end{proof}

Now we propose the following notation for the number of
particles traveling in either direction
\begin{equation*}
M^\pm(t)= \sum_{\ell=1}^\infty \ell f^\pm(\ell,t) =\partial_z F^\pm(1,t),
\end{equation*}
and derive the system
\begin{eqnarray}\label{mplus}
\partial_t M^+ &=& \frac{1}{2} M^-N^+ - \frac{1}{2} M^+N^-, \\
\partial_t M^- &=& \frac{1}{2} M^+N^- - \frac{1}{2} M^-N^+. \label{mminus}
\end{eqnarray}

\begin{lemma}
System~(\ref{ztransform1}) and~(\ref{ztransform2}) admits the conserved quantity
$$
M^+(t) + M^-(t),
$$
and if
$$
N^\pm(0) > N^\mp(0) \quad \text{then} \quad M^\mp(t) \to 0 \quad \text{as} \quad t \to \infty.
$$
\end{lemma}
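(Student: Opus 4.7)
The conservation law is essentially free: adding equations~(\ref{mplus}) and~(\ref{mminus}) gives $\partial_t(M^+ + M^-) = 0$, so $M^+(t) + M^-(t) \equiv M_0 := M^+(0) + M^-(0)$. This is the easy half of the statement.

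For the convergence claim, assume without loss of generality that $N^+(0) > N^-(0)$, and set $C_0 := N^+(0) - N^-(0) > 0$. By Lemma~\ref{lemmans} we already know $N^+(t) - N^-(t) \equiv C_0$ (hence $N^+(t) \geq C_0$) and $N^-(t) \to 0$ with an explicit exponential rate: from the formula derived there, $N^-(t) \sim K e^{-C_0 t/2}$ for some constant $K$ depending on the initial data. I would then substitute the conservation law $M^+ = M_0 - M^-$ into~(\ref{mminus}) to obtain the linear, non-autonomous, first-order ODE
\begin{equation*}
\partial_t M^- + \bigl(\tfrac{1}{2}N^+(t)\bigr) M^- = \tfrac{1}{2} M_0\, N^-(t),
\end{equation*}
(with the symmetric equation for $M^+$). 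Note that along the way one should observe that $M^\pm(t) \geq 0$ is preserved: if $M^- = 0$ at some instant, then $\partial_t M^- = \tfrac12 M^+ N^- \geq 0$, so non-negativity propagates, and together with the conservation law this yields $0 \leq M^\pm(t) \leq M_0$.

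The remainder is the integrating-factor computation. Writing $\mu(t) := \exp\!\left(\tfrac12\int_0^t N^+(s)\,ds\right)$, which satisfies $\mu(t) \geq e^{C_0 t/2}$, we get
\begin{equation*}
M^-(t) = \mu(t)^{-1}\left[M^-(0) + \tfrac{M_0}{2}\int_0^t \mu(s)\, N^-(s)\,ds \right].
\end{equation*}
The first term is $O(e^{-C_0 t/2})$. For the integral, the exponential decay of $N^-(s)$ and the exponential growth of $\mu(s)$ cancel at the same rate, so $\mu(s) N^-(s)$ stays bounded, the integral grows at worst linearly in $t$, and dividing by $\mu(t) \geq e^{C_0 t/2}$ yields $M^-(t) = O(t\, e^{-C_0 t/2}) \to 0$.

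The main subtlety I anticipate is exactly this cancellation of rates between damping and forcing: one has to resist the temptation to bound things crudely and produce an apparent divergence, and instead observe that the polynomial correction $t$ is harmless against the genuine exponential decay of $\mu(t)^{-1}$. Once that is handled, the argument is symmetric in the roles of $M^+$ and $M^-$, so the analogous conclusion in the opposite case $N^-(0) > N^+(0)$ follows by relabeling.
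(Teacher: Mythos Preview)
Your proof is correct and follows essentially the same route as the paper: both reduce to the linear first-order ODE for $M^-$ obtained by substituting the conservation law into~(\ref{mminus}), and both solve it via the integrating factor. The only difference is cosmetic: the paper plugs in the explicit expressions for $N^\pm(t)$ from Lemma~\ref{lemmans}, evaluates the integrals, and displays a closed-form formula for $M^-(t)$ from which the limit is read off directly, whereas you estimate the integrating-factor representation to get the $O(t\,e^{-C_0 t/2})$ bound (in fact $\mu(s)N^-(s)$ turns out to be exactly constant, so your linear-growth estimate is sharp).
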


\begin{proof}
The first conclusion is immediate and follows from adding equations~\eqref{mplus} and~\eqref{mminus}.
Denoting $M^+ + M^- = C_1$, and substituting this quantity back into these equations we get
\begin{equation}
\partial_t M^- = \frac{1}{2} N^- (C_1-M^-)-\frac{1}{2}M^-N^+.
\end{equation}
This is a linear ordinary differential equation that can be integrated to
yield
\begin{eqnarray}
M^-(t) = \hspace{9cm} \\ \nonumber \frac{2C_1N^-(0)^2 + e^{C_0t/2}
[2C_0^2 M^-(0)-2C_1N^-(0)^2+ C_0 C_1 N^-(0) N^+(0)t]}
{2[N^-(0)-e^{C_0t/2}N^+(0)]^2}.
\end{eqnarray}
The second conclusion follows immediately after taking the long time limit in this formula.
\end{proof}

These results indicate the system always becomes ordered
in the long time. In fact, the number of remaining clusters equals
the (absolute value) of the difference of the number of clusters
traveling in either direction initially. So the direction of motion
of the final ordered state is prescribed by the initial condition.
This result should be interpreted probabilistically. The quantities
$N^\pm(t)$ and $M^\pm(t)$ do not represent the dynamics of single
realizations. They are instead the result of averaging these
quantities over many realizations of the stochastic process. So they
represent what happens on average. If the quantity $C_0$ is
macroscopic, i.~e. it is of the same order of magnitude of the
initial total number of clusters, then the result will be observed
in most of the realizations, up to some errors.

\bigskip

The case of the symmetric initial condition $N^+(0)=N^-(0)$ is specially interesting. If this condition holds initially,
then it holds for all times: $N^+(t)=N^-(t) \, \forall t>0$. Furthermore, if we refer to these quantities just as $N(t)$
(precisely because they are equal), we have
\begin{equation}\label{nsymminitcon}
N(t)=\frac{N(0)}{1+tN(0)/2}.
\end{equation}
These results are immediate consequences of the proof of Lemma~\ref{lemmans}.
Equation~\eqref{nsymminitcon} means that clusters in both directions disappear progressively
in the long time. Note that equilibrium is approached exponentially
fast when $C_0 \neq 0$ but the approach becomes algebraic when
$C_0=0$. This case is special because it is the only one, for the
random kernel, for which the kinetic theory does not predict order.
However, if we go beyond the kinetic level and think about the
underlying stochastic process, order should be achieved. This is so
because states in which there exist clusters moving in both
directions are active: collitions are possible and therefore the
system could evolve through collisions towards any other possible
state. On the other hand, ordered states are absorbing: once the
system gets into one of this, collisions are no longer possible, and
therefore there is no possible escape from them. Note the situation
is reminiscent of that of systems undergoing absorbing state phase
transitions, and in particular of the case of two symmetric
absorbing barriers \cite{munoz,lopez}. We devote much of the
remainder of this paper to the study of this case. In section
\ref{randomkernel} we study the kinetic theory corresponding to this
case. We show that the long time asymptotics of the distribution
function adopts a self-similar form in this case. In particular we have
that
$$
f^{\pm}(\ell,t) \to
\frac{1}{t^2}\Phi \left( {\ell \over t} \right) \quad \text{as} \quad t \to \infty,
$$
where $\Phi$ is the self-similar profile,
in a sense that is made totally precise in theorems~\ref{thefun1} and~\ref{thefun2},
which are the main results of that section.
This explains the
fact that the number of clusters approaches zero as time evolves:
clusters of smaller sizes progressively disappear as bigger and
bigger clusters are formed. In section \ref{numericalresults} we
study direct numerical simulations of the aggregation process
precisely in this case. We confirm the theoretical prediction
finding that it is quite probable that order is achieved by means of
the formation of one giant cluster containing all particles in the
system. This fact reflects at the microscopic level the formation of
the self-similar form for long times in the solution to the kinetic
equation. So order is finally achieved in this case too, but it has
a different nature than in the previous situation. Order has the
form of a single giant cluster, not of several clusters; the
direction of motion of the giant cluster is not prescribed at the
initial time, but it is chosen at random with the same probability
among the two different possibilities; and finally order is achieved
after a longer transient in this case.

\subsection{Majority kernel}

We now consider a different kernel which is still integrable. Let us
propose the following Smoluchowski equations
\begin{equation} \label{collisionkernel3}
\partial_t f^\mp(\ell,t) = \sum_{k,j=1}^\infty \delta_{k+j,\ell} \, \frac{k}{k+j}
f^\pm(k,t)f^\mp(j,t)- \sum_{m=1}^\infty f^\pm(m,t) f^\mp (\ell,t),
\end{equation}
which describe collisions among clusters with $k$ and $j$ particles.
After the collision a single cluster composed of all involved
particles merges. The resulting cluster has a probability $k/(k+j)$
of traveling in the same direction the cluster with $k$ particles
was traveling, and a probability $j/(k+j)$ of traveling in the
direction the cluster with $j$ particles was traveling. We will
refer to this as the ``majority kernel''.
Our approach will be the same as in the other case and starts
with equation~\eqref{substitution}.
For the generating
functions we find
\begin{eqnarray}\label{majority1}
\partial_t (\partial_z F^+) &=& F^- (\partial_z F^+) - (\partial_z F^+) N^-,
\\
\partial_t (\partial_z F^-) &=& F^+ (\partial_z F^-) - (\partial_z F^-) N^+,
\label{majority2}
\end{eqnarray}
where the number of clusters $N^\pm (t)= F^\pm(1,t)$. Denoting
$M^\pm (t)=
\partial_z F^\pm(1,t)$ the number of particles, we find
\begin{equation}\label{noorder}
\partial_t M^+=\partial_t M^-=0,
\end{equation}
so the initial number of particles traveling in either direction
does not change over time. This result apparently means the system
could never become ordered. However, the actual meaning is that the
number of particles moving in either direction, averaged over many
realizations of the stochastic process, is constant. As we have
already said the kinetic description implicitly assumes this
average. In other words, in the dynamics dictated by this kernel the
individual collisions do not conserve momentum, but momentum is
conserved on average. In any case, individual realizations should
become ordered, because as in the former case, order is an absorbing
state. In particular, the situation is analogous to the previous
one: we have a system with two symmetric absorbing barriers again.
Result (\ref{noorder}) could indicate the advent of dynamic
self-similarity, just like in the former case, in the long time
limit. Although for the majority kernel we did not develop the
theory to the same extent as in the former case, we have built some
progress which is detailed in section~\ref{majoritykernel}. In this
section we show (computational) evidence of the existence of
self-similar asymptotic behavior, although we do not clarify the
conditions under which it develops.

\section{Self-similar asymptotics for the random kernel}
\label{randomkernel}

The goal of the present section is to carry out the rigorous analysis of the asymptotic behavior of our
kinetic theory in the case that the random kernel is considered. We will divide the analysis in several sections.
In section~\ref{pre} we set the preliminary results for the analysis of a simplified
situation in which we assume that all moments of both initial conditions are
identical. In~\ref{formalsymm} we formally analyze the time asympotics of our model in this simplified case.
In~\ref{asymm} we calculate the exact solution of the model in the full case. The formal asymptotic analysis of this case
is presented in~\ref{formalasymm}. Finally, rigorous convergence results are set forth in sections~\ref{convergence} and~\ref{sharp}.
Let us remark that all the sections except~\ref{formalsymm} and~\ref{formalasymm} present rigorous results and that, furthermore,
the formal results in~\ref{formalsymm} and~\ref{formalasymm} are rigorously justified in~\ref{convergence} and~\ref{sharp}.

\subsection{Preliminaries}\label{pre}

As we have already mentioned, we will now
concentrate on the case of a symmetric initial condition
($N^+(0)=N^-(0)$). Our goal is to derive its self-similar asymptotic behavior.
Before moving to the general case, we start with a simplified
situation: we assume that all moments of both initial condition are
identical. This translates in considering equations
(\ref{ztransform1}) and (\ref{ztransform2}) with initial conditions
$F^+(z,0)=F^-(z,0)$ and in particular $N^+(0)=N^-(0)$.

The probabilistic approach employed in the previous section
implies the following structure of the initial condition:
\begin{equation}\label{initcon}
F\left( z,0\right) =F_{0}\left( z\right) =\sum_{n=0}^{\infty
}a_{n}z^{n}\;\;,\;\;a_{n}\geq 0.
\end{equation}
We may assume $N\left( 0\right) =\sum_{n=0}^{\infty }a_{n}=1$
without loss of generality; in particular this implies $F_0(z)$ is
holomorphic on the unit disc in $\mathbb{C}$ as a function of $z$. From now
on we will consider $z$ as complex variable taking its values on the unit disc of $\mathbb{C}$
and a real $0 \le t < \infty$.

We start
proving the following
\begin{lemma}\label{lemmasymm}
Consider the system of differential
equations~(\ref{ztransform1})-(\ref{ztransform2}) subject to the
initial condition $F^+(z,0)=F^-(z,0)$. Then we have
$F^+(z,t)=F^-(z,t)$ during the lapse of existence of the solutions.
\end{lemma}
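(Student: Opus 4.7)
The plan is to exploit the symmetric structure of the system by considering the difference $G(z,t):=F^{+}(z,t)-F^{-}(z,t)$, which vanishes at $t=0$ by hypothesis, and showing it satisfies a linear scalar ODE (in $t$, for each fixed $z$) that forces it to remain identically zero throughout the interval of existence.

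First I would invoke Lemma~\ref{lemmans}. Since $F^{+}(z,0)=F^{-}(z,0)$ implies in particular $N^{+}(0)=F^{+}(1,0)=F^{-}(1,0)=N^{-}(0)$, the conserved quantity $C_0=N^{+}(t)-N^{-}(t)$ equals zero, so $N^{+}(t)=N^{-}(t)=:N(t)$ for all times in the lapse of existence. Note that by~\eqref{logistic} this common value is explicit and stays bounded, so there is no integrability issue for what follows.

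Next, subtracting~\eqref{ztransform2} from~\eqref{ztransform1} the gain term $\tfrac12 F^{+}F^{-}$ cancels and, using $F^{\pm}(1,t)=N(t)$,
\begin{equation*}
\partial_t G(z,t)=-F^{+}(z,t)\,F^{-}(1,t)+F^{-}(z,t)\,F^{+}(1,t)=-N(t)\,G(z,t).
\end{equation*}
For every fixed $z$ in the unit disc this is a linear scalar ODE in $t$ with zero initial datum, so direct integration gives
\begin{equation*}
G(z,t)=G(z,0)\exp\!\left(-\int_0^{t}N(s)\,ds\right)=0,
\end{equation*}
that is, $F^{+}(z,t)=F^{-}(z,t)$ for all admissible $t$, as claimed.

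There is no real obstacle in this argument: the only subtlety worth checking is that the zeroth-moment equality $N^{+}(t)=N^{-}(t)$ persists, so that the coefficient in the linearised equation for $G$ is a scalar function of $t$ alone (independent of $z$); this is exactly what Lemma~\ref{lemmans} supplies. The mechanism is precisely that the quadratic coupling in~\eqref{ztransform1}--\eqref{ztransform2} is symmetric in $(F^{+},F^{-})$ while the loss term is antisymmetric in a way that produces, modulo the previously established conservation law, a purely multiplicative dissipation on the difference.
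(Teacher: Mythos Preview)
Your proof is correct and follows essentially the same route as the paper: subtract the two equations so the quadratic term cancels, use the conservation law $N^{+}(t)=N^{-}(t)$ (equivalently $C_0=0$) to reduce the difference to a linear scalar ODE $\partial_t G=-N(t)G$, and integrate with zero initial datum. The only cosmetic difference is that the paper writes the coefficient as $N^{-}(t)$ before invoking $C_0=0$, whereas you first apply Lemma~\ref{lemmans} and write it as the common value $N(t)$.
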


\begin{proof}
Subtracting both equations one finds
\begin{equation}
\partial_t(F^+-F^-)=-(F^+-F^-)N^- + C_0 F^-.
\end{equation}
Using $N^+(0)=F^+(1,0)=F^-(1,0)=N^-(0)$ we find $C_0=0$ and
consequently
\begin{equation}
\partial_t(F^+-F^-)=-(F^+-F^-)N^-(t).
\end{equation}
This equation can be integrated to yield
\begin{equation}
F^+(z,t)-F^-(z,t)=[F^+(z,0)-F^-(z,0)]\exp \left[ -\int_0^t
N^-(t')dt' \right]=0,
\end{equation}
so we conclude.
\end{proof}

\begin{definition}
We denote $F=F\left( z,t\right) \equiv F^+(z,t) = F^-(z,t)$ and
$N=N\left( t\right) =F\left( 1,t\right) \equiv N^+(t)=N^-(t)$
whenever these equalities hold.
\end{definition}

\begin{corollary}
The function $F$ obeys the following differential equation:
\begin{equation*}
\partial_t F=\frac{1}{2}F^{2}-NF.
\end{equation*}
\end{corollary}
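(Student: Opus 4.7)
The plan is to deduce the corollary as an immediate consequence of Lemma~\ref{lemmasymm}, applied to the system (\ref{ztransform1})--(\ref{ztransform2}). Under the symmetric initial condition $F^+(z,0)=F^-(z,0)$, the lemma guarantees that $F^+(z,t)=F^-(z,t)$ for all $t$ in the interval of existence, so the common value can be denoted simply by $F(z,t)$, and in particular $F^\pm(1,t)=N(t)$.

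First, I would substitute this identification into, say, equation~(\ref{ztransform1}). The product $F^+(z,t)F^-(z,t)$ becomes $F(z,t)^2$, and the term $F^+(z,t)F^-(1,t)$ becomes $F(z,t)N(t)$. This yields directly
\begin{equation*}
\partial_t F = \tfrac{1}{2}F^2 - NF,
\end{equation*}
as asserted. For consistency, I would check that doing the same substitution into (\ref{ztransform2}) produces the same equation, which it does by the evident symmetry of the right-hand sides under the interchange $(+,-)\leftrightarrow(-,+)$; this is a sanity check that the system indeed collapses to a single scalar equation without overdetermination.

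No step should present any obstacle: the corollary is a direct reading of the system once the symmetry established by Lemma~\ref{lemmasymm} is invoked, and the only thing to state is the resulting scalar nonlinear ODE (with $z$ acting as a parameter). The work of proof has all been done in the lemma; what remains here is purely notational.
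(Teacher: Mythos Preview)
Your proposal is correct and matches the paper's approach: the corollary is stated without proof in the paper, precisely because it follows immediately from Lemma~\ref{lemmasymm} and the definition of $F$ by substituting $F^+=F^-=F$ and $N^+=N^-=N$ into either of (\ref{ztransform1}) or (\ref{ztransform2}). Your consistency check against the second equation is a reasonable addition but not required.
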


\begin{lemma}\label{expsol}
Lets assume $F(z,0)=F_0(z)$ and $N\left( 0\right) =1$. Then
\begin{equation*}
F\left( z,t\right) =\frac{1}{\left( 1+\frac{t}{2}\right) ^{2}}\frac{%
F_{0}\left( z\right) }{\left[ 1-\frac{t}{\left( 2+t\right)
}F_{0}\left( z\right) \right] }.
\end{equation*}
\end{lemma}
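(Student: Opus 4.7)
\medskip

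The plan is to first determine $N(t)$ explicitly and then reduce the equation for $F$ to a linear first-order ODE in $t$ (with $z$ playing the role of a parameter).

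First I would note that since we are in the symmetric case $F^+(z,0)=F^-(z,0)$, Lemma~\ref{lemmasymm} gives $F^+\equiv F^-$ and so $C_0=N^+(t)-N^-(t)=0$. Then the scalar Bernoulli equation~\eqref{logistic} from the proof of Lemma~\ref{lemmans} reduces to $\partial_t N = -\tfrac{1}{2}N^2$ with $N(0)=1$, whose solution is
\begin{equation*}
N(t)=\frac{1}{1+t/2}.
\end{equation*}

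Second, with $N(t)$ known, the equation $\partial_t F=\tfrac{1}{2}F^{2}-N F$ from the preceding corollary is a Riccati equation in $t$ for each fixed $z$. I would linearize it by the standard substitution $G(z,t):=1/F(z,t)$, obtaining
\begin{equation*}
\partial_t G=N(t)\,G-\tfrac{1}{2},\qquad G(z,0)=\frac{1}{F_0(z)}.
\end{equation*}
The integrating factor is $\exp\bigl(-\int_0^t N(s)\,ds\bigr)=(1+t/2)^{-2}$, and an elementary computation gives $\int_0^t (1+s/2)^{-2}ds=2t/(2+t)$. Integrating and solving for $F=1/G$ yields precisely the claimed formula.

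The only real issue is that the substitution $G=1/F$ is only legitimate where $F\neq 0$, and for complex $z$ in the unit disc $F_0(z)$ can vanish. I would handle this by first carrying out the computation on the real segment $z\in[0,1]$, where $F_0(z)\ge 0$ and $F_0(1)=1$, so that $F(z,t)>0$ for small $t$ and $G$ is well-defined. Since both sides of the claimed identity are analytic in $z$ on the unit disc (the right-hand side has poles only where $1-\tfrac{t}{2+t}F_0(z)$ vanishes, which for $|z|\le 1$ requires $|F_0(z)|\ge (2+t)/t>1$, impossible), the identity extends to the full unit disc by analytic continuation. This also confirms that the solution exists globally in $t$ on $|z|\le 1$.
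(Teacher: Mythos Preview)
Your proof is correct and follows essentially the same route as the paper: solve $\partial_t N=-\tfrac12 N^2$ to get $N(t)=(1+t/2)^{-1}$, then linearize the Bernoulli equation via the substitution $1/F$ and integrate. The paper writes the solution for $H=1/F$ directly and checks that $H$ never vanishes on the closed disc (using $|F_0(z)|\le 1$), whereas you compute with an integrating factor and handle the possible zeros of $F_0$ by working first on a real interval and then invoking analytic continuation; this extra care is not in the paper but is a reasonable precaution, and your observation that the right-hand side has no poles on $|z|\le1$ is exactly the paper's $H\neq0$ argument in different clothing.
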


\begin{proof}
Choosing $z=1$ we obtain
\begin{equation*}
\partial_t N = -\frac{N^{2}}{2}\;\;,
\end{equation*}
and so
\begin{equation*}
N\left( t\right) =\frac{1}{1+\frac{t}{2}}.
\end{equation*}
Substituting this value of $N(t)$ in the equation for $F$ we
reduce this second equation to an ODE of Bernoulli type. The
solution of the initial value problem for $F$ can be therefore
found by means of the change of variables
\begin{equation*}
H = \frac{1}{F}.
\end{equation*}
We find
\begin{equation*}
H\left( z,t\right) = \left( 1+\frac{t}{2}\right) ^{2}\left[
\frac{1}{F_{0}\left( z\right) }-\frac{t}{\left( 2+t\right)
}\right]. \\
\end{equation*}
And the desired solution for $F(z,t)$ follows from
$F(z,t)=1/H(z,t)$ whenever $H(z,t) \neq 0$.
Since $\left|
F_{0}\left( z\right) \right| \leq 1$ for $|z| \leq 1$, as can be
deduced from the Taylor series in~\eqref{initcon} by means of the triangle inequality,
$H(z,t) \neq 0$ for every $z$ in the unit disc of $\mathbb{C}$ and $0 \le t < \infty$, so we conclude.
\end{proof}

\begin{corollary}
The result in Lemma~\ref{lemmasymm} holds for all time $t>0$.
\end{corollary}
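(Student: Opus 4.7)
The plan is the following. Lemma~\ref{lemmasymm} establishes $F^+(z,t)=F^-(z,t)$ on the maximal interval of existence $[0,T_{\max})$ of the solutions, so the corollary reduces to showing that $T_{\max}=\infty$. I would do this by exhibiting the explicit expression from Lemma~\ref{expsol} as a bona fide global-in-time solution.

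Normalize so that $N(0)=1$; this is legitimate because a linear rescaling of $F_0$ rescales the whole problem linearly in $F$. Lemma~\ref{expsol} then delivers
\begin{equation*}
F(z,t)=\frac{1}{(1+t/2)^{2}}\,\frac{F_{0}(z)}{1-\frac{t}{2+t}F_{0}(z)}.
\end{equation*}
The key estimate is a lower bound on the denominator. Because the coefficients $a_{n}$ in~\eqref{initcon} are nonnegative with $\sum_{n\ge 0} a_{n}=F_{0}(1)=1$, the triangle inequality yields $|F_{0}(z)|\le 1$ for all $z$ in the closed unit disc, while $\tfrac{t}{2+t}\in[0,1)$ for every $t\in[0,\infty)$. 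Therefore
\begin{equation*}
\left|1-\tfrac{t}{2+t}F_{0}(z)\right|\ge 1-\tfrac{t}{2+t}=\tfrac{2}{2+t}>0
\end{equation*}
uniformly in $|z|\le 1$, so the formula defines a function holomorphic in $z$ on the open unit disc, smooth in $t$ on $[0,\infty)$, and free of any finite-time blow-up.

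It then remains to identify this formula with the common value $F^{+}=F^{-}$ supplied by Lemma~\ref{lemmasymm}. Direct differentiation confirms that the formula satisfies $\partial_{t} F=\tfrac{1}{2}F^{2}-N F$ with $N(t)=1/(1+t/2)$, equivalently solves the system~(\ref{ztransform1})-(\ref{ztransform2}) with $F^{+}=F^{-}$, and matches the prescribed datum $F_{0}$. Standard uniqueness for the corresponding initial value problem --- the right-hand side of~(\ref{ztransform1})-(\ref{ztransform2}) is bilinear, hence locally Lipschitz, in the Banach algebra of holomorphic functions on the closed unit disc with supremum norm --- forces the formula to coincide with $F^{\pm}$ on $[0,T_{\max})$. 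Combined with its global-in-time validity, this pushes $T_{\max}$ to $+\infty$ and yields $F^{+}(z,t)=F^{-}(z,t)$ for every $t>0$. The only mild obstacle is picking a function space in which to phrase uniqueness cleanly; once that is set up, the bilinear structure makes the argument routine.
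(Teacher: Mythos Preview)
Your argument is correct and follows essentially the same line as the paper: the Corollary is placed immediately after Lemma~\ref{expsol} precisely because the explicit formula derived there is shown to be singularity-free on the closed unit disc for all $t\in[0,\infty)$, which forces the maximal existence interval of the symmetric solution to be infinite. You have simply made explicit the uniqueness step that the paper leaves implicit; the only minor inaccuracy is your description of the normalization $N(0)=1$ as a ``linear rescaling'' (it is really a simultaneous rescaling of $F$ and $t$), but this does not affect the argument since the paper assumes $N(0)=1$ from the outset anyway.
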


\bigskip

We have already seen in the proof of Lemma~\ref{expsol} that
$\left|F_{0}\left( z\right) \right| \leq 1$ for $|z| \leq 1$, as can be
inferred from the triangle inequality applied to the Taylor series~\eqref{initcon}. We also have the
strict inequality
\begin{equation*}
\left| F_{0}\left( z\right) \right| < 1\text{ if \ \ }\left|
z\right| < 1\;,
\end{equation*}
if we consider the strict triangle inequality instead.

By assumption we know $F_0(1)=1$; on the other hand we can have
$F_{0}\left( z\right) =1$ for $\left| z\right| =1$ at some points
$z \neq 1.$ In particular we have the following result:

\begin{proposition}\label{bezout}
Let $F_0(z)$ be as explained above. Let $d=\mathrm{gcd}(n|\{a_n \neq
0\}), \, n \in \mathbb{N}^+$. Then $F_0(z)=1$ if and only if
$z^d=1$.
\end{proposition}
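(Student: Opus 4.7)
The plan is to combine the equality case of the triangle inequality applied to the Taylor series in~\eqref{initcon} with B\'ezout's identity. The intuition is that a convex combination of points on the unit circle can equal $1$ only when every contributing point already equals $1$, after which the gcd condition pins down precisely which $z$ are admissible.

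First, I would localize $z$ to the unit circle. The strict triangle inequality already noted in the text gives $|F_0(z)| < 1$ for $|z| < 1$ in the nondegenerate case, so $F_0(z) = 1$ in the closed unit disc forces $|z| = 1$; conversely, $z^d = 1$ automatically yields $|z| = 1$. Given $|z| = 1$, I would rewrite $1 - F_0(z) = 0$ as $\sum_{n} a_n(1 - z^n) = 0$ and take real parts, obtaining $\sum_n a_n(1 - \mathrm{Re}\,z^n) = 0$. Each summand is nonnegative since $a_n \geq 0$ and $\mathrm{Re}\,z^n \leq |z^n| = 1$, so every summand must vanish; combined with $|z^n|=1$ this yields $z^n = 1$ for every $n$ in the set $S = \{n \in \mathbb{N}^+ : a_n \neq 0\}$.

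The main (though still routine) step is then B\'ezout's identity: some finite subset $\{n_1,\ldots,n_k\} \subset S$ and integers $c_1,\ldots,c_k$ satisfy $d = \sum_i c_i n_i$. Because $|z| = 1$ negative exponents are well defined, so $z^d = \prod_i (z^{n_i})^{c_i} = 1$. The converse is then immediate: if $z^d = 1$, then $d$ divides $n$ for every $n \in S$, hence $z^n = (z^d)^{n/d} = 1$, so $F_0(z) = \sum_{n \geq 0} a_n z^n = \sum_{n \geq 0} a_n = 1$. The only subtlety worth flagging is that $S$ may be infinite, but this causes no trouble for B\'ezout, which invokes only finitely many generators; the degenerate case $S = \emptyset$ (i.e. $F_0 \equiv 1$) is consistent under the convention $\gcd(\emptyset)=0$ and $z^0 = 1$.
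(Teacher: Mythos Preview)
Your proof is correct and follows essentially the same route as the paper: reduce to $|z|=1$, use the equality case of the triangle inequality to force $z^n=1$ for every $n$ with $a_n\neq 0$, and then invoke B\'ezout to conclude $z^d=1$. Your handling of the B\'ezout step is in fact slightly more careful than the paper's, which asserts the existence of a coprime \emph{pair} among the $m$'s (not guaranteed in general; a finite subset with gcd one is what is actually needed, exactly as you write).
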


\begin{proof}
Suppose that $a_{n}\neq 0$ only for $%
n=md ,\;m=1,2,\dots,\;d >1$. Then
\begin{equation*}
F_{0}\left( z\right) =\sum_{m=1}^{\infty }a_{md }\left( z^{d
}\right) ^{m}.
\end{equation*}
It then follows that $F_{0}\left( z\right) =1$ for $z^{d }=1.$

Now we prove the reciprocal. If $a_n=0$ except for a subsequence
then
$$
F_0(z)=\sum_{n=1}^\infty a_n z^n =\sum_{m=1}^\infty
a_{md}z^{md},
$$
where $a_{md}$ contains the non-vanishing subsequence and
$d$ is the greatest common divisor of all the positive
coefficients $a_n$: $d=\mathrm{gcd}(n|\{a_n \neq 0\})$,
$n=1,2,\cdots$.

In order to have $F_0(z)=1$ we need
$$z^{md}=1 \,\, \forall \, m \in \mathbb{N}^+, $$
otherwise the strict triangular inequality applied to the Taylor
series implies $|F_0(z)|<1$. So we have, by the definition of
greatest common divisor, the existence of a pair of relative
primes $m_1,m_2>0$ such that $z^{m_1d}=1$ and $z^{m_2d}=1$.
This automatically implies
$$
(z^d)^{xm_1+ym_2}=1,
$$
for any two integers $x$ and $y$. A direct consequence of B\'ezout
lemma is the existence of pairs of integers $(x,y)$ satisfying the
diophantine equation $xm_1+ym_2=1$; this leads to the desired
conclusion $z^d=1$.
\end{proof}

\begin{remark}\label{bezoutrem}
The previous result immediately implies that the expression
$F_0(z)-1$ has a family of zeros at the $d -$roots of unity.
\end{remark}

\bigskip

This property in turn assures the existence of a {\it
``characteristic wavelength''} in the following sense: If the
initial condition is composed only by clusters having a number of
particles which is multiple of some integer then only clusters
with a number of particles multiple of the same integer will be
generated.

\bigskip

The computations in the following section will assume that $z=1$ is the only root of
$F_{0}\left( z\right) =1$. If there were more roots the
computations would be analogous.

\bigskip

We have $\left| 1-\frac{t}{\left( 2+t\right) }F_{0}\left( z\right)
\right| >0 $ if \thinspace $z\neq 1,\;\left| z\right| \leq 1.$
This implies $\left| F\left( z,t\right) \right| \leq
\frac{C}{t^{2}}$ if $t\rightarrow \infty $ for every $z\neq 1,$
$\left| z\right| \leq 1$, and for a suitable constant $C$ (the
symbol $C$ will be used to denote a generic constant whose value
may change from line to line). On the contrary $\left| F\left(
z,t\right) \right| \sim \frac{1}{t}$ when $t\rightarrow \infty
$ if $z=1$.

\bigskip

In the following we will obtain the asymptotic behavior of the
solutions in this case. Note that an immediate corollary of
Lemma~\ref{lemmasymm} is that $f^+(\ell,t)=f^-(\ell,t) \, \forall \, t \ge 0$;
therefore we will use the following

\begin{definition}
We denote $f(\ell,t) \equiv f^+(\ell,t)=f^-(\ell,t)$
whenever this equality holds.
\end{definition}

\subsection{Formal asymptotics for the symmetric case}\label{formalsymm}

In this section we will carry out a formal analysis of the
asymptotic behavior of $f(\ell,t)$; these results will be rigourously
justified in the following sections.
Using the Cauchy formulas for the Taylor coefficients we find
\begin{equation*}
f\left( \ell ,t\right) =\frac{1}{2\pi i}\int_{\left| z\right| =1}\frac{%
F\left( z,t\right) }{z^{\ell +1}}dz
\end{equation*}
where the line integration is carried out in the counterclockwise
direction. Then we have
\begin{equation*}
f\left( \ell ,t\right) =\frac{1}{2\pi i}\frac{1}{\left(
1+\frac{t}{2}\right)
^{2}}\int_{\left| z\right| =1}\frac{F_{0}\left( z\right) }{\left[ 1-\frac{t}{%
\left( 2+t\right) }F_{0}\left( z\right) \right] }\frac{dz}{z^{\ell
+1}}.
\end{equation*}
It is easy to see that the integral is well defined for all finite
$t$.

\bigskip

We start our analysis with the following assumption on the initial
data
$$f\left( \ell ,0\right) \sim e^{-C \ell},$$
for an arbitrary positive constant $C$. This assumption is purely
technical: we will use it in the first instance to simplify the
analysis, but we will substitute it for a more general one further
below. In this case we have $F_{0}\left( z\right)$ is holomorphic
in $\left| z\right| <1+\delta $, $\delta >0$ small enough, and
$0<F_{0}^{\prime }\left( 1\right) =\sum_{\ell =1}^{\infty }\ell
f\left( \ell ,0\right) <\infty .$

It is interesting to examine the integrand pole situated next to
$z=1.$ Such a pole is situated at the root of
\begin{equation*}
1-\frac{t}{\left( 2+t\right) }F_{0}\left( z_{t}\right) \approx 0.
\end{equation*}

If $t=\infty $ we know that the root is simple since
$0<F_{0}^{\prime }\left( 1\right) $ and $z_{\infty }=1.$ The
implicit function theorem assures the existence of one root for
$t$ large enough and one has the approximation
\begin{equation*}
\frac{2}{\left( 2+t\right) }-\frac{t}{\left( 2+t\right)
}F_{0}^{\prime }\left( 1\right) \left( z_{t}-1\right) =0
\end{equation*}
from where
\begin{equation*} z_{t} \approx 1+\frac{2}{t}
\frac{1}{F_{0}^{\prime }\left( 1\right) }\;\;\text{when
}t\rightarrow \infty.
\end{equation*}

Deforming contours and applying the residue theorem we obtain
\begin{eqnarray*}
f\left( \ell ,t\right) &=&-\frac{1}{\left( 1+\frac{t}{2}\right) ^{2}}\frac{%
F_{0}\left( z_{t}\right) }{\left( z_{t}\right) ^{\ell +1}}\cdot
\lim_{z\rightarrow z_{t}}\frac{\left( z-z_{t}\right) }{\left( 1-\frac{t}{%
\left( 2+t\right) }F_{0}\left( z\right) \right) }+ \\
&&+\frac{1}{2\pi i}\frac{1}{\left( 1+\frac{t}{2}\right)
^{2}}\int_{\left|
z\right| =1+\frac{\delta }{2}}\frac{F_{0}\left( z\right) }{\left[ 1-\frac{t}{%
\left( 2+t\right) }F_{0}\left( z\right) \right] }\frac{dz}{z^{\ell
+1}}
\end{eqnarray*}
where $\delta $ is chosen small enough in order to avoid
additional singularities in the integrand. By employing
L'H\^{o}pital method
\begin{equation*}
\lim_{z\rightarrow z_{t}}\frac{\left( z-z_{t}\right) }{\left( 1-\frac{t}{%
\left( 2+t\right) }F_{0}\left( z\right) \right) }=-\frac{2+t}{t}\frac{1}{%
F_{0}^{\prime }\left( z_{t}\right) }
\end{equation*}
from where
\begin{eqnarray*}
f\left( \ell ,t\right) &=&\frac{1}{\left( 1+\frac{t}{2}\right) ^{2}}\frac{1}{%
\left( z_{t}\right) ^{\ell +1}}\left( \frac{2+t}{t}\right)
\frac{F_{0}\left(
z_{t}\right) }{F_{0}^{\prime }\left( z_{t}\right) }+ \\
&&+\frac{1}{2\pi i}\frac{1}{\left( 1+\frac{t}{2}\right)
^{2}}\int_{\left|
z\right| =1+\frac{\delta }{2}}\frac{F_{0}\left( z\right) }{\left[ 1-\frac{t}{%
\left( 2+t\right) }F_{0}\left( z\right) \right] }\frac{dz}{z^{\ell
+1}}.
\end{eqnarray*}

This gives the self-similar behavior of the solution when
$t\rightarrow \infty ,$ since
\begin{equation*}
\frac{1}{\left( 1+\frac{t}{2}\right) ^{2}}\frac{1}{\left(
z_{t}\right)
^{\ell +1}}\left( \frac{2+t}{t}\right) \frac{F_{0}\left( z_{t}\right) }{%
F_{0}^{\prime }\left( z_{t}\right) }\approx \frac{4}{t^{2}}\frac{1}{%
F_{0}^{\prime }\left( 1\right) }\frac{1}{\left( z_{t}\right)
^{\ell +1}}.
\end{equation*}

In particular, in the region $\ell $ of order $t$ one obtains
\begin{equation*}
\frac{1}{\left( z_{t}\right) ^{\ell +1}} \approx \frac{1}{\left( 1+\frac{2}{%
t}\frac{1}{F_{0}^{\prime }\left( 1\right) }\right) ^{\ell +1}}
\approx \exp \left( -\frac{2 \ell }{ t F_{0}^{\prime }\left(
1\right) }\right).
\end{equation*}
So we obtain the self-similar structure $\frac{1}{t^{2}}\Phi \left( \frac{\ell }{t}%
\right)$ of the solution
\begin{equation}
f\left( \ell ,t\right) \approx \frac{4}{t^{2}}\frac{1}{%
F_{0}^{\prime }\left( 1\right) } \exp \left( -\frac{2 \ell}{t
F_{0}^{\prime }\left( 1\right) }\right)
\end{equation}
for long $t$, large $\ell$, and both sharing the same magnitude.
We note that $2F_{0}^{\prime} \left( 1 \right) = M^+(0) + M^-(0) =: \ell_0$, the
total number of particles in the system (note this is a conserved quantity as the
collisions we consider are mass conserving).

On the other hand, it is interesting to observe that in the region where
$\ell $ is of order one the resulting integral term in the
$f\left( \ell ,t\right) $ formula yields an order $\frac{1}{t^{2}}$ term
depending on the values of $F_{0}$ in regions where $z$ is not in
the neighborhood of one. The contribution to the mass from such a
term approaches zero when $\ell \rightarrow \infty$, since this
contribution is relevant only if $\ell$ is of order one. With the
hypothesis of analyticity made we have that such a term is bounded
by $\frac{e^{-C\ell }}{t^{2}}$, except for a multiplicative
constant, when $\ell \rightarrow \infty$. Anyway this term
induces a sort of {\it ``boundary layer''} in the sense that the
region where $\ell$ is of order one cannot be described using the
self-similar function. Rigorous convergence results will be shown
further below, where we will precisely state how the solution of
the coagulation equation converges to the self-similar function.
Also, both contributions to the dynamics, self-similar function
and boundary layer, will be clearly identified in our numerical
simulations in section \ref{numericalresults}.

\subsection{The asymmetric case}\label{asymm}

Now we move to calculate the solution of the random kernel model
when $N^+(0)=N^-(0)$ (and consequently $N^+(t)=N^-(t)$) but
$F^+(z,0) \neq F^-(z,0)$ for $z \neq 1$.

Suppose $N^{+}=N^{-}=N$; denote
\begin{equation*}
G\left( z,t\right) :=F^{+}\left( z,t\right) -F^{-}\left(
z,t\right) \text{;}
\end{equation*}
we have, from equations (\ref{ztransform1}), (\ref{ztransform2})
and (\ref {logistic})
\begin{equation*}
\partial_{t}G=-NG,\qquad\partial_{t}N=-\frac{1}{2}N^{2}\text{.}
\end{equation*}
The second equation may be integrated to give
\begin{equation*}
N\left( t\right) =\frac{N_{0}}{1+\frac{N_{0}}{2}t},\qquad
N_{0}:=N\left( 0\right)
\end{equation*}
which results in
\begin{equation*}
G\left( z,t\right) =\frac{1}{\left( 1+\frac{N_{0}}{2}t\right) ^{2}}%
G_{0}\left( z\right) ,\qquad G_{0}\left( z\right) :=G\left(
z,0\right) .
\end{equation*}
Replacing $F^{+}=G+F^{-}$ in equation (\ref{ztransform2}) we get
\begin{equation}
\partial_{t}F^{-}=\frac{1}{2}\left( F^{-}\right) ^{2}+\frac{1}{2}%
GF^{-}-NF^{-}. \label{efemenos}
\end{equation}
This equation of Bernoulli type can be explicitly solved making the
substitution $H^{-}=1/F^{-}$. It turns out that
\begin{equation}
\partial_{t}H^{-}=-\frac{1}{2}-\frac{G}{2}H^{-}+NH^{-}. \label{riccati}
\end{equation}
Equation (\ref{riccati}) can be integrated to give
\begin{eqnarray}
\nonumber
H^{-}\left( z,t\right) =\left( 1+\frac{N_{0}}{2}t\right) ^{2}\left\{ \frac{1%
}{G_{0}\left( z\right) }\left[ \exp\left( -\frac{G_{0}\left( z\right) }{2}%
\frac{t}{1+\frac{N_{0}}{2}t}\right) -1\right] \right. \\ \left.
+\frac{1}{F^{-}\left(
z,0\right) }\exp\left( -\frac{G_{0}\left( z\right) }{2}\frac{t}{1+\frac{N_{0}%
}{2}t}\right) \right\}. \nonumber
\end{eqnarray}
In particular, the solution $F^{-}$ to (\ref{efemenos}) has the
simple expression
\begin{eqnarray}
\nonumber
& & F^{-}\left( z,t\right) \\ \nonumber
&=& \frac{1}{\left( 1+\frac{N_{0}}{2}t\right) ^{2}} \times \hspace{8cm} \\ & & \frac{%
1}{\dfrac{1}{G_{0}\left( z\right) }\left[ \exp\left( -\dfrac
{G_{0}\left(
z\right) }{2}\dfrac{t}{1+\frac{N_{0}}{2}t}\right) -1\right] +\dfrac{1}{%
F^{-}\left( z,0\right) }\exp\left( -\dfrac{G_{0}\left( z\right) }{2}\dfrac{t%
}{1+\frac{N_{0}}{2}t}\right) } \nonumber
\end{eqnarray}
where, we recall, we have denoted $G_{0}\left( z\right)
=F^{+}\left( z,0\right) -F^{-}\left( z,0\right) $.

\bigskip

Our goal is to prove rigorous estimates for the asymptotic temporal
behavior of the distribution functions $f^\pm(\ell,t)$. The analysis will be split
in the following sections. We start by proving the
necessary estimates in the complex plane for the corresponding
generating functions. We proceed analogously to what we did in the
previous case of a totally symmetric initial condition. This is, we
start looking for poles in the integrand of the integral expression
for $f^-(\ell,t)$.

The main objective of this section is to prove that the exact solution for
$F^{-}\left( z,t\right) $ is given by
\begin{equation}
F^{-}\left( z,t\right) =\frac{1}{\left( 1+\frac{N_{0}t}{2}\right) ^{2}}\frac{%
1}{\frac{1}{G_{0}\left( z\right) }\left[ \exp \left(
-\frac{G_{0}\left( z\right)
}{2}\frac{t}{1+\frac{N_{0}t}{2}}\right) -1\right] +\frac{\exp
\left( -\frac{G_{0}\left( z\right) }{2}\frac{t}{1+\frac{N_{0}t}{2}}\right) }{%
F^{-}\left( z,0\right) }} \label{fminus}
\end{equation}
where $G=F^{+}-F^{-}$. First we reformulate the problem in purely
analytic terms. We
can rescale the factor $N_{0}$%
\begin{equation*}
g_{0}\left( z\right) =\frac{G_{0}\left( z\right) }{N_{0}}\;\;,\;\;f_{0}%
\left( z\right) =\frac{F^{-}\left( z,0\right) }{N_{0}}.
\end{equation*}
It is convenient to rewrite the solution of $F^{-}$ to check the
condition that avoids singularities in the present case. We have
\begin{equation*}
F^{-}\left( z,t\right) =\frac{1}{\left( 1+\frac{N_{0}t}{2}\right) ^{2}}%
\frac{\exp \left( g_{0}\left( z\right) \frac{t}{\frac{2}{N_{0}}+t}\right) }{%
\left[ \frac{\left[ 1-\exp \left( g_{0}\left( z\right) \frac{t}{\frac{2}{%
N_{0}}+t}\right) \right] }{g_{0}\left( z\right)
}+\frac{1}{f_{0}\left( z\right) }\right] },
\end{equation*}
where we have also rescaled $F^{-}\left( z,t\right) \to F^{-}\left(
z,t\right) / N_0$. Therefore the singularities arise at the zeroes
of
\begin{equation*}
\left[ \frac{\left[ 1-\exp \left( g_{0}\left( z\right) \frac{t}{\frac{2}{%
N_{0}}+t}\right) \right] }{g_{0}\left( z\right)
}+\frac{1}{f_{0}\left( z\right) }\right].
\end{equation*}
This function can have poles at the zeroes of $f_{0}\left(
z\right) .$ At such points $F^{-}\left( z,t\right) =0.$ Notice
that the zeroes of $g_{0}$ are not problematic due to the cancellations of the zeroes of $\frac{%
\left[ 1-\exp \left( g_{0}\left( z\right) \frac{t}{\frac{2}{N_{0}}+t}\right) %
\right] }{g_{0}\left( z\right) }$ at the numerator and
denominator.

\begin{proposition}
Consider the above expression for $F^-(z,t)$. This expression has
no poles neither for $t \in [0,\infty)$ and $|z| \le 1$ nor for
$t=\infty$ and $|z| <1$.
\end{proposition}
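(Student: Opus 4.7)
My plan is to show that the denominator bracket in the displayed expression for $F^-(z,t)$ cannot vanish on the stated region, by combining an elementary modulus estimate with a uniqueness argument based on the coagulation system itself.

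Set $D(z,\tau):=\frac{1-e^{g_0(z)\tau}}{g_0(z)}+\frac{1}{f_0(z)}$ with $\tau=N_0 t/(2+N_0 t)\in[0,1)$ for $t\in[0,\infty)$ and $\tau=1$ for $t=\infty$. As already noted just above the statement of the proposition, zeros of $f_0$ force $F^-=0$ and zeros of $g_0$ are apparent (they cancel between numerator and denominator), so any genuine pole of $F^-$ must come from a zero of $D$ at a point where both $f_0(z)\ne 0$ and $g_0(z)\ne 0$. I would first dispatch the subcase $g_0(z)=0$: the removable-singularity limit gives $D(z,\tau)=1/f_0(z)-\tau$, and because $|f_0(z)|\le 1$ on the closed unit disc (triangle inequality applied to the Taylor series~\eqref{initcon}), we have $|D(z,\tau)|\ge |1/f_0(z)|-\tau\ge 1-\tau>0$ for all $\tau\in[0,1)$. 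For $\tau=1$ the condition $f_0(z)=1$ would be required, which on $|z|<1$ is ruled out by the strict triangle inequality, exactly as earlier in the paper.

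For the main subcase $g_0(z)\ne 0$, clearing denominators reduces $D(z,\tau)=0$ to the equivalent equation $f_0(z)\,e^{g_0(z)\tau}=f_0^+(z)$, where $f_0^+(z):=f_0(z)+g_0(z)=F^+(z,0)/N_0$. I would rule this out by matching the explicit formula against the generating function of the coagulation solution itself. An induction on $\ell$ shows that~\eqref{smoluchowsky} with the random kernel admits a global, classical, non-negative solution $\{f^\pm(\ell,t)\}_{\ell\ge 1}$, because $\partial_t f^-(\ell,t)=\frac{1}{2}\sum_{k+j=\ell}f^+(k,t)f^-(j,t)-N^+(t)f^-(\ell,t)$ is a scalar linear ODE in $t$ whose source involves only already-constructed $f^\pm(k,t)$ with $k<\ell$; the integrating factor $(1+N_0 t/2)^2$ preserves non-negativity, and summation gives $\sum_\ell f^-(\ell,t)=N^-(t)=N_0/(1+N_0 t/2)<\infty$. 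The Taylor series $\tilde F^-(z,t):=\sum_\ell f^-(\ell,t)z^\ell$ is therefore analytic on $|z|<1$, continuous on $|z|\le 1$, dominated by $N^-(t)$ in modulus, and by direct computation satisfies the Bernoulli ODE~\eqref{efemenos} with the same initial datum as the explicit formula.

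Since the Bernoulli ODE linearizes to a first-order linear ODE for $H^-=1/F^-$ with a unique global solution, the two representations must coincide on every time sub-interval on which the explicit formula is finite. If the formula had a pole at some $(z_*,t_*)$ in the stated region, then on the maximal interval $[0,t_*)$ it would agree with $\tilde F^-(z_*,\cdot)$; but taking $t\to t_*^-$ would give $|\tilde F^-(z_*,t)|\to\infty$, contradicting $|\tilde F^-(z,t)|\le N^-(t)<\infty$. Hence no such pole exists, proving the proposition. I expect the main obstacle to be purely bookkeeping, namely making the matching rigorous up to the putative blow-up time. A purely complex-analytic alternative, avoiding the coagulation system entirely, would require showing that the analytic function $\rho(z):=\int_0^1\frac{ds}{(1-s)f_0(z)+s f_0^+(z)}$ arising from the integral identity $\log(f_0^+/f_0)=g_0\cdot\rho$ never takes real values in $[0,1)$ at points $z\in\{|z|\le 1\}\setminus\{1\}$ where $g_0\ne 0$; this appears delicate because simple examples such as $f_0(z)=z,\,f_0^+(z)=z^2,\,z=i$ already give $\mathrm{Re}(\rho(i))=-\pi/4$, so the argument would have to exploit the vanishing locus of $\mathrm{Im}(\rho)$ together with the local expansion $\rho(z)=1-\frac{m^++m^-}{2}(z-1)+O((z-1)^2)$ near $z=1$, where $m^\pm:=M^\pm(0)/N_0$.
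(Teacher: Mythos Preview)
Your finite-$t$ argument is correct and is, at its core, the paper's own: both rest on the bound $|\tilde F^-(z,t)|\le N^-(t)$ coming from non-negativity of the coagulation coefficients. You spell out the construction of the solution by induction on $\ell$ and the ODE-uniqueness matching more carefully than the paper does, which is a genuine gain in rigor.

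The gap is the case $t_*=\infty$ (equivalently $\tau=1$). Your blow-up contradiction needs $|\tilde F^-(z_*,t)|\to\infty$ as $t\to t_*^-$, but if $D(z_*,1)=0$ while $D(z_*,\tau)\ne 0$ for all $\tau<1$, no such blow-up occurs: since $\partial_\tau D(z,\tau)=-e^{g_0(z)\tau}$, one has $D(z_*,\tau)\sim e^{g_0(z_*)}(1-\tau)\sim 2e^{g_0(z_*)}/(N_0t)$, and the prefactor $(1+N_0t/2)^{-2}$ wins, so the formula tends to $0$ rather than $\infty$. Thus from the finite-$t$ result alone you cannot conclude $D(\cdot,1)\ne 0$ on $|z|<1$. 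The paper handles this case separately by a Rouch\'e argument: expanding $D(z,\tau)=D(z,1)+O(1-\tau)$ on a circle $|z|=1-\epsilon$ and taking $t$ large, a zero of $D(\cdot,1)$ in the open disc would force a zero of $D(\cdot,\tau)$ in the same disc for some finite $t$, contradicting what you have already proved. Equivalently, Hurwitz's theorem applied to the locally uniform convergence $D(\cdot,\tau)\to D(\cdot,1)$ on $\{|z|<1\}\setminus f_0^{-1}(0)$ closes the gap in one line.
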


\begin{proof}
Expression~(\ref{fminus}) can be bounded using the fact
$|F^-(z,t)| \le F^-(1,t)$ for $|z| \le 1$; this inequality is
obtained using the triangular inequality on the power series which
defines this quantity, exactly as we did it in the previous
section. We have also shown the coagulation equation implies that the number of particles $%
N^{-}\left( t\right) $ is bounded and decreases as $\frac{1}{t}$ for large $%
t.$ Then
\begin{equation}
\left| F^{-}\left( z,t\right) \right| \leq
\frac{C}{1+t}\;\;,\;\;\left| z\right| \leq 1. \label{E1}
\end{equation}
This shows no poles are present in this expression for finite $t$
and $|z| \le 1$.

Now we move to the case $t=\infty$. In this case our problem
reduces to prove that the following function
\begin{equation}
\frac{1}{g_{0}\left( z\right) }\left[ \exp \left( -g_{0}\left(
z\right) \right) -1\right] +\frac{\exp \left( -g_{0}\left(
z\right) \right) }{f_{0}\left( z,0\right) } \label{nozeros}
\end{equation}
does not have any zero in the disk $\left| z\right| <1.$ We
proceed by contradiction. We assume there is a zero for this
expression at $z_0$ such that $|z_0| <1$. For long $t$ we have the
approximation
\begin{eqnarray} \nonumber
& & \frac{1}{g_{0}\left( z\right) }\left[ \exp \left(
-\frac{g_{0}\left( z\right)
}{2}\frac{t}{\frac{1}{N_0}+\frac{t}{2}}\right) -1\right] +
\frac{\exp
\left( -\frac{g_{0}\left( z\right) }{2}\frac{t}{\frac{1}{N_0}+\frac{t}{2}}\right) }{%
f_{0}\left( z,0\right) } \\ &=&
\nonumber \frac{1}{g_{0}\left( z\right) }\left[ \exp \left(
-g_{0}\left( z\right) \right) -1\right] + \frac{\exp \left(
-g_{0}\left( z\right) \right) }{f_{0}\left(
z \right) } + \\ & &
2 \frac{\exp \left( -\frac{g_{0}\left( z\right)
}{N_{0}}\right)\left[ f_{0}\left( z \right) + g_{0}\left( z\right)
\right]}{f_{0}\left( z \right) N_0^2} \frac{1}{t} + O \left(
\frac{1}{t^2} \right). \label{nozeros2}
\end{eqnarray}
Due to Rouche's Theorem, expressions~(\ref{nozeros})
and~(\ref{nozeros2}) should have the same number of zeros inside the
disk $|z|=1$ for a sufficiently large $t$, provided $F^{-}\left(
z,0\right)$ has no zeros on $|z|=1$. If this number is nonzero this
implies that $F^{-}\left( z,t\right) $ blows up for a finite value
of $t$ at the interior of the disk $\left| z\right| =1$ but this
contradicts the estimate (\ref{E1}), so we conclude in this case. If
$F^{-}\left( z_1,0\right)=0$ for some $z_1$ on $|z|=1$ then we know
this zero is isolated because the function is holomorphic. We may
know apply Rouche's Theorem on the contour $|z|=1-\epsilon$; we can
make $\epsilon$ arbitrarily small so we have $|z'|< 1-\epsilon$ for
any possible pole $z'$ in the open disc $|z|<1$, and by enlarging
the time $t$ we may keep under control the $O(t^{-1})$ term
in~(\ref{nozeros2}). This leads to the desired contradiction in the
general case.
\end{proof}

\bigskip

\begin{corollary}
Formula~\eqref{fminus} is the actual solution to equation~\eqref{efemenos} for any $|z| \le 1$ and $0 \le t <\infty$.
\end{corollary}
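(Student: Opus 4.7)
The plan is to combine the preceding proposition with a direct verification. Formula~\eqref{fminus} was obtained by introducing $H^-=1/F^-$, which transforms the Bernoulli equation~\eqref{efemenos} into the linear equation~\eqref{riccati}; integrating~\eqref{riccati} and inverting then yields the closed-form expression in~\eqref{fminus}. The only way this procedure could fail to produce a genuine solution on the whole domain $\{|z|\le 1\}\times[0,\infty)$ is if the denominator of~\eqref{fminus} vanished somewhere, since that would correspond to a blow-up of $F^-$ that is incompatible with the \textit{a priori} bound~\eqref{E1}.

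The preceding proposition eliminates exactly this obstruction: it proves that the expression
\[
\frac{1}{g_{0}\left( z\right) }\left[ \exp \left( -\frac{g_{0}\left( z\right)}{2}\frac{t}{\frac{2}{N_{0}}+t}\right) -1\right] +\frac{\exp \left( -\frac{g_{0}\left( z\right) }{2}\frac{t}{\frac{2}{N_{0}}+t}\right) }{F^{-}\left( z,0\right) }
\]
has no zeros on the closed cylinder $\{|z|\le 1\}\times[0,\infty)$. Consequently the right-hand side of~\eqref{fminus} defines a function that is holomorphic in $z$ on $|z|\le 1$, of class $C^{1}$ in $t$ on $[0,\infty)$, and uniformly bounded. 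Setting $t=0$ collapses the exponential to $1$ and the bracket to $0$, so the formula reduces to $F^{-}(z,0)$ and the prescribed initial datum is matched.

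It remains to check that~\eqref{fminus} indeed satisfies~\eqref{efemenos}. Differentiating the identity $F^{-}H^{-}\equiv 1$ in $t$ and using the linear equation~\eqref{riccati} that $H^{-}$ obeys by construction, one recovers~\eqref{efemenos} by elementary algebra at every point where $F^{-}\ne 0$. The zeros of $z\mapsto F^{-}(z,t)$ are isolated for each fixed $t$ (the function is holomorphic and, since $F^{-}(1,t)=N^{-}(t)>0$, not identically zero), while both sides of~\eqref{efemenos} are continuous in $(z,t)$; the identity therefore extends to the whole domain by continuity. Uniqueness of the solution with the prescribed initial datum, standard for the locally Lipschitz right-hand side of~\eqref{efemenos}, closes the argument.

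The substantive step is the non-vanishing of the denominator, which is already established in the preceding proposition. Everything else here is bookkeeping: the explicit derivation via $H^{-}=1/F^{-}$ plus a continuity argument to handle the isolated zeros of $F^{-}$ where that substitution formally breaks down. There is no real analytic obstacle, only the need to assemble these ingredients coherently.
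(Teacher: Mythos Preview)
Your proposal is correct and follows the same logic the paper relies on: the corollary is stated in the paper without proof, as an immediate consequence of the preceding Proposition guaranteeing the denominator never vanishes on $\{|z|\le 1\}\times[0,\infty)$. You have simply made explicit the routine verification (initial condition, satisfaction of~\eqref{efemenos}, continuity across zeros of $F^{-}$, uniqueness) that the paper leaves implicit.
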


\begin{remark}
Notice that there is at least a zero of (\ref{nozeros}) at the point
$z=1$.
\end{remark}

\begin{remark}
In order to obtain convenient representation formulas for the
self-similar asymptotics of the coagulation equation under study it
is necessary to locate the remaining zeros of (\ref{nozeros}) in
$\left\{ \left| z\right| =1\right\} \setminus \left\{ 1\right\}$. As
in the case of the totally symmetric initial conditions it is
possible to have more than one zero at the unit disk $\left|
z\right| =1$. For this to happen, by Proposition~\ref{bezout}, we
have the following necessary and sufficient condition
\begin{equation*}
F_{0}^{\pm}(z)=\sum_{m=1}^{\infty }a_{m d}^{\pm }\left( z^{d}\right)
^{m}, \qquad d >1.
\end{equation*}
In this case there are several zeros $z^{d}=1$. Physically the
resulting solution will have gaps of length $d$. In summary, there
is a complete analogy with what happened in the case of the totally
symmetric initial condition, see the discussion in section~\ref{pre}.
\end{remark}

\bigskip

In order to find the roots of the equation
\begin{equation}
\label{roots} \frac{1-e^{g_0(z)}}{g_0(z)}+\frac{1}{f_0(z)}=0,
\end{equation}
we reformulate the problem in terms of the function $h_0=f_0+g_0$,
what yields
$$
\frac{e^{f_0(z)}}{f_0(z)}=\frac{e^{h_0(z)}}{h_0(z)},
$$
after assuming that $g_0(z) \neq 0$ for every $z$ that solves the
equation. We have also used the fact that both $f_0$ and $g_0$ are
bounded and the equality holds for $f_0=0$ only if $g_0=0$ and vice
versa. We note the points $z$ such that $g_0(z)=0$ are not important
for our present purposes since at these points the equality holds
only if $f_0(z)=1$, what in turn implies $h_0(z)=1$. From now on we
drop the subindex ``$0$'' in order to simplify the notation of these
functions.

\bigskip

\begin{lemma}\label{lemtreefunc}
Let $f(z)=\sum_{n=1}^\infty a_n z^n$ and $h(z)=\sum_{n=1}^\infty b_n
z^n$ be two holomorphic functions defined for $z \in \mathbb{C}$ on
the closed disc $|z| \le 1$. Assume $a_n,b_n \in \mathbb{R}^+ \cup
\{0\}$ and $f(1)=h(1)=1$. Then a complex number $z_0$ such that
$|z_0| \le 1$ is a solution to the equation
$$
\frac{e^{f(z)}}{f(z)}=\frac{e^{h(z)}}{h(z)}
$$
if and only if $f(z_0)=h(z_0)$.
\end{lemma}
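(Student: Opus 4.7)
The ``if'' direction is immediate: if $f(z_0)=h(z_0)$, both sides of the displayed equation clearly coincide (the degenerate case where this common value vanishes is already handled by the discussion immediately preceding the lemma). For the converse, observe first that since $a_n,b_n\ge 0$ and $f(1)=h(1)=1$, the triangle inequality applied termwise to the Taylor series yields $|f(z_0)|,|h(z_0)|\le 1$ whenever $|z_0|\le 1$. Both values thus lie in the closed unit disc $\overline{D}$, and the problem reduces to showing that the meromorphic function
$$
\phi(w):=\frac{e^w}{w}
$$
is injective on $\overline{D}\setminus\{0\}$.

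To establish this injectivity I would use the argument principle. The first step is to analyze the image of the unit circle,
$$
\gamma(\theta):=\phi(e^{i\theta})=e^{\cos\theta}\exp\bigl(i(\sin\theta-\theta)\bigr),\qquad\theta\in[-\pi,\pi].
$$
Its modulus $e^{\cos\theta}$ varies smoothly in $[e^{-1},e]$, while its argument $\psi(\theta):=\sin\theta-\theta$ has derivative $\cos\theta-1\le 0$, so $\psi$ is strictly decreasing and maps $[-\pi,\pi]$ onto $[-\pi,\pi]$. Strict monotonicity of $\psi$, viewed modulo $2\pi$, shows that $\gamma$ is a simple closed curve traversed clockwise, with winding number $-1$ around the origin.

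The second step is the argument principle applied to $\phi-w_0$ on the open disc $D$. Since $\phi$ has a unique pole, simple at $w=0$, and no zeros, the number of preimages of $w_0\notin\gamma$ lying in $D$ equals $\mathrm{wind}(\gamma,w_0)+1$. For $w_0$ in the unbounded component of $\mathbb{C}\setminus\gamma$ this gives exactly one preimage, and for $w_0$ in the bounded component (which contains the origin) it gives zero. Combined with the injectivity of $\phi|_{\partial D}$, inherited from the strict monotonicity of $\psi$, and the disjointness of the images ($D\setminus\{0\}$ is sent to the exterior of $\gamma$ while $\partial D$ is sent onto $\gamma$), we conclude that $\phi$ is injective on $\overline{D}\setminus\{0\}$. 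Applying this to $w_0=f(z_0)$ and $w_0=h(z_0)$ yields $f(z_0)=h(z_0)$.

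The main subtlety lies in the first step, namely verifying that $\gamma$ is a simple closed curve with the stated winding number; this rests entirely on the strict monotonicity of $\sin\theta-\theta$ on $(-\pi,\pi)$, which is the crucial elementary input. Everything else is a routine application of the argument principle.
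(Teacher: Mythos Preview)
Your proof is correct and follows essentially the same route as the paper's. The paper works with the reciprocal function $w\mapsto we^{-w}$ (entire rather than meromorphic), computes that the image of the unit circle has winding number $1$ via the strictly increasing phase $\theta-\sin\theta$, invokes the argument principle on the open disc, and then treats the boundary $|w|=1$ separately by the same monotonicity argument. Your version with $\phi(w)=e^w/w$ is the same argument in reciprocal form; you are slightly more explicit than the paper about the pole contribution in the argument principle and about the disjointness of $\phi(D\setminus\{0\})$ and $\phi(\partial D)$, but the key analytic input---strict monotonicity of $\sin\theta-\theta$---is identical.
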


\begin{proof} The direct implication is obvious. So we will
concentrate on proving the inverse implication in the following.

We can cast the equation into the following form
\begin{equation}
\label{zeroes} e^{-f(z)}f(z)=e^{-h(z)}h(z),
\end{equation}
using the fact that both $f$ and $h$ are bounded and the original
equality holds for $f=0$ only if $h=0$ and vice versa.

Note that both $f$ and $h$ fulfill the inequality
$$
|f| \le 1, \qquad |h| \le 1, \qquad \mathrm{for} \qquad |z| \le 1,
$$
so in particular these functions map the closed disc $|z| \le 1$
onto itself.

The complex function $w \to we^{-w}$, where $\{w \in \mathbb{C} \,
\left| \,\,\,\, |w| = 1 \} \right.$, has winding number $1$. This
can be seen by noting that $w=e^{i\theta}$, $\theta \in [0,2\pi)$,
and so we have
$$
\theta \to e^{-\cos(\theta)} e^{i[\theta-\sin(\theta)]},
$$
and so the phase $\theta-\sin(\theta)$ clearly reveals that this
function winding number is $1$. By invoking the Argument Principle
we may conclude that the equation
$$
w \, e^{-w}=w_0
$$
has at most one solution for $w$ in the disc $|w|<1$ for some
fixed $w_0$. Now let us focus on the $|w|=1$ case. In this case we
see that the phase $\theta-\sin(\theta) \in [0,2\pi)$ and it is
strictly increasing when $\theta \in [0,2\pi)$. This tells us that
the mapping $w \to we^{-w}$ is univalued in $|w|=1$ as well,
yielding us the desired conclusion.
\end{proof}

\begin{remark}
The same conclusion could be derived using the properties of a
well known special function. Consider now the equation
$$
\mathcal{T}(z) \, e^{-\mathcal{T}(z)}=z,
$$
that defines the \emph{tree function} $\mathcal{T}=\mathcal{T}(z)$
\cite{tree}. Alternatively we have
$$
\mathcal{T}(z)=-\mathcal{W}(-z),
$$
where $\mathcal{W}=\mathcal{W}(z)$ is the Lambert Omega function
\cite{omega}. The tree function is multivalued (as the Lambert
function is), but we know, by using the properties of the Lambert
function proved in \cite{omega}, that for $|\mathcal{T}| \le 1$,
$\mathcal{T}(z)$ maps bijectively onto the $z-$plane, what
guarantees the desired result.
\end{remark}

\bigskip

\begin{corollary}
Let $f(z)$ and $g(z)$ be as above. Then $z_0$ is a solution to Eq.
(\ref{roots}) if and only if $z_0^d=1$, where
$d=\mathrm{gcd}(n|\{a_n \neq 0\} \wedge \{b_n \neq 0\}) \, \forall
\, n \in \mathbb{N}^+$.
\end{corollary}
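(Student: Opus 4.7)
The plan is to combine Lemma~\ref{lemtreefunc} with Proposition~\ref{bezout}, after first reducing equation~\eqref{roots} to the cleaner condition $f(z_0)=h(z_0)=1$, where I set $h:=f+g$. The point is that $h=h_0=F_0^+/N_0$, like $f=f_0=F_0^-/N_0$, is a rescaled probability generating function with nonnegative coefficients (which I denote $b_n$) summing to $h(1)=1$; hence $h$ meets the hypotheses of Proposition~\ref{bezout} just as $f$ does.

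For the reduction I would argue case by case. Zeros of $f$ cannot produce solutions of~\eqref{roots} because $1/f$ blows up there; zeros of $h$ with $f(z_0)\neq 0$ cannot either, because then $g(z_0)=-f(z_0)$ and~\eqref{roots} collapses to $e^{-f(z_0)}/f(z_0)=0$, which is impossible. In the remaining regime, assuming momentarily $g(z_0)\neq 0$, I multiply~\eqref{roots} through by $fg$ and rearrange to obtain $h(z_0)e^{-h(z_0)}=f(z_0)e^{-f(z_0)}$; Lemma~\ref{lemtreefunc} then forces $f(z_0)=h(z_0)$, i.e.\ $g(z_0)=0$, contradicting the hypothesis of this case. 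Therefore every solution satisfies $g(z_0)=0$, and substituting this into~\eqref{roots} together with the limit $\lim_{g\to 0}(1-e^g)/g=-1$ yields $-1+1/f(z_0)=0$, hence $f(z_0)=1$ and $h(z_0)=f(z_0)+g(z_0)=1$. The converse is immediate from the same L'H\^{o}pital computation.

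Having reduced the problem to identifying the $z_0$'s on which $f(z_0)=h(z_0)=1$, I would apply Proposition~\ref{bezout} separately to $f$ and $h$ to obtain $f(z_0)=1\iff z_0^{d_f}=1$ with $d_f=\gcd\{n:a_n\neq 0\}$, and $h(z_0)=1\iff z_0^{d_h}=1$ with $d_h=\gcd\{n:b_n\neq 0\}$. Both conditions hold simultaneously exactly when the multiplicative order of $z_0$ divides $\gcd(d_f,d_h)$, equivalently when $z_0^{\gcd(d_f,d_h)}=1$. The elementary arithmetic identity $\gcd(\gcd A,\gcd B)=\gcd(A\cup B)$ identifies $\gcd(d_f,d_h)$ with the gcd $d$ appearing in the statement (interpreting its defining set as the union of the supports of $(a_n)$ and $(b_n)$, in agreement with the preceding Remark). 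The main obstacle is the case analysis in the reduction step: the passage to the equation of Lemma~\ref{lemtreefunc} is legitimate only when $g(z_0)\neq 0$, so the degenerate case $g(z_0)=0$ must be handled directly via L'H\^{o}pital, and one must also rule out spurious solutions at the zeros of $f$ and $h$.
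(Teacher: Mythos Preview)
Your proposal is correct and follows essentially the same route as the paper: reduce equation~\eqref{roots} to the equivalent form treated by Lemma~\ref{lemtreefunc}, conclude that any solution must satisfy $g(z_0)=0$ and hence $f(z_0)=h(z_0)=1$, and then invoke Proposition~\ref{bezout} on both $f$ and $h$. Your version is in fact more carefully written than the paper's---you make the contradiction argument (assume $g(z_0)\neq 0$, apply the lemma, obtain $g(z_0)=0$) explicit, you dispose of the degenerate cases $f(z_0)=0$ and $h(z_0)=0$ directly, and you spell out the arithmetic identity $\gcd(d_f,d_h)=\gcd(\operatorname{supp}(f)\cup\operatorname{supp}(h))$ that the paper leaves implicit.
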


\begin{proof}
From Lemma \ref{lemtreefunc} we find that the only possible
solutions to Eq. (\ref{roots}) are those fulfilling $g(z_0)=0$. This
in turn forces $f(z_0)=1$. This equality only holds when
$z^{\ell_1}=1$, where $\ell_1=\mathrm{gcd}(n|\{a_n \neq 0\})$, as
can be directly deduced from Proposition \ref{bezout} and Remark
\ref{bezoutrem}. Another consequence of Lemma \ref{lemtreefunc} is
that $f(z_0)=h(z_0)$ must hold simultaneously to $f(z_0)=1$, so the
desired conclusion follows.
\end{proof}

\subsection{Formal asymptotics for the asymmetric case}\label{formalasymm}

We now proceed to calculate the self-similar asymptotic form of the
solution to the problem with identical initial conditions
$F^+(z,0)=F^-(z,0)$. We will assume $\ell/d \in \mathbb{N}$ because
otherwise $f(\ell,t) \equiv 0$. We already found the following
explicit formula for the solution
$$ f(\ell,t)=\frac{1}{2\pi i}
\frac{1}{\left( 1+{t \over 2} \right)^2}
\int_{|z|=1}\frac{F_0(z)}{1-\frac{t}{(2+t)}F_0(z)}
\frac{dz}{z^{\ell+1}},
$$
where $F_0(z)$ is the initial condition of the generating
function. We know that this function can be written as
$F_0(z)=Q(z^d)$, where $d$ is some positive integer and $Q(z^d)$
is a function such that $Q(1)=1$ and $Q(w) \neq 1$ for $w \in
\mathbb{C}$, $|w| \le 1$, and $w \neq 1$. In this case we have
$$
f(\ell,t)=\frac{1}{2\pi i} \frac{1}{\left( 1+{t \over 2}
\right)^2} \int_{|z|=1}\frac{Q(z^d)}{1-\frac{t}{(2+t)}Q(z^d)}
\frac{dz}{z^{\ell+1}},
$$
and changing variables $z^d=\zeta$ we obtain
$$
f(\ell,t)=\frac{1}{2\pi i d} \frac{1}{\left( 1+{t \over 2}
\right)^2}
\int_{|\zeta|=1}\frac{Q(\zeta)}{1-\frac{t}{(2+t)}Q(\zeta)}
\frac{d\zeta}{\zeta^{1+\ell/d}}.
$$
There is a pole in the integrand next to $\zeta=1$:
$$
\zeta_t = 1 + \frac{2}{t} \frac{1}{Q'(1)} \quad + \quad
\mathrm{higher} \quad \mathrm{order} \quad \mathrm{terms},
$$
so we may write
\begin{eqnarray}\nonumber
f(\ell,t)&=&\frac{1}{2\pi i d} \frac{1}{\left(
1+{t \over 2} \right)^2} \int_{|\zeta|=1+{\delta \over
2}}\frac{Q(\zeta)}{1-\frac{t}{(2+t)}Q(\zeta)}
\frac{d\zeta}{\zeta^{1+\ell/d}} \\ \nonumber & &
-\frac{1}{d \left( 1+{t \over
2} \right)^2} \frac{Q(\zeta_t)}{\zeta_t^{1+\ell/d}} \lim_{\zeta
\to \zeta_t}\frac{\zeta-\zeta_t}{1-{t \over (t+2)}Q(\zeta)}.
\end{eqnarray}
The limit can be calculated with L'H\^{o}pital rule
$$
\lim_{\zeta \to \zeta_t} \frac{\zeta-\zeta_t}{1-{t \over
2+t}Q(\zeta)}=-\frac{2+t}{t Q'(\zeta_t)},
$$
and so we have
$$
-\frac{1}{d \left( 1+{t \over 2} \right)^2}
\frac{Q(\zeta_t)}{\zeta_t^{1+\ell/d}} \lim_{\zeta \to
\zeta_t}\frac{\zeta-\zeta_t}{1-{t \over (t+2)}Q(\zeta)}=
\frac{4/d}{t(2+t)}\frac{Q(\zeta_t)}{Q'(\zeta_t)}\frac{1}{\zeta_t^{1+\ell/d}}.
$$
In the limit $t \gg \max\{ 2/Q'(1), 2\}$, $\ell \gg d$, and
$\ell/t$ finite we find
\begin{eqnarray}\nonumber
f(\ell,t) \approx \frac{4}{dt^2}\frac{1}{Q'(1)}\exp \left[
-\frac{2 \ell}{d t Q'(1)} \right]&=&\frac{4}{t^2}\frac{1}{F_0'(1)}
\exp \left[ -\frac{2 \ell}{t F_0'(1)} \right] \\ \nonumber
&=&\frac{8}{\ell_0 t^2}
\exp \left( -\frac{4 \ell}{\ell_0 t} \right),
\end{eqnarray}
where we have used $F_0'(1)=Q'(1)d$, and the constant $\ell_0 = 2
F_0'(1)$ is the total number of particles $\ell_0 := M^+(0) + M^-(0)
= M^+(t) + M^-(t)$. As mentioned at the beginning of this section,
we have implicitly assumed $\ell/d \in \mathbb{N}$ because otherwise
the solution is identically zero; so the asymptotic solution
explicitly reads
$$
f(\ell,t) \approx \frac{8}{\ell_0 t^2} \exp \left( -\frac{4
\ell}{\ell_0 t} \right) \sum_{m=1}^\infty \delta_{md,\ell}.
$$
We thus see that the self-similar long-time solution ${1 \over
t^2}\Phi({\ell \over t})$ is uniform in $d$; the number $d$ is just
a measure of the size of the clusters when the self-similar state is
reached, what happens when $\ell \gg d$. Apparently, the time it
takes to reach the self-similar state should depend on the gap index
$d$: we have shown this time fulfills $t \gg \max \{ 4d/\ell_0,2\}$,
and so, the bigger the gaps are, the longer the transient to the
self-similar state would be. Note however that this condition
trivially becomes $t \gg 2$ because the gap index could be at most
twice the number of particles. In consequence the length of the
transient towards self-similarity is independent of $d$.

Now we move to calculate the self-similar asymptotics for the case
of not identically distributed initial conditions. We assume
$N(0)=1$, and recall the solution for the generating function:
\begin{eqnarray}
& & \nonumber F^{-}\left( z,t\right) \\ \nonumber &=&\frac{1}{\left(
1+\frac{t}{2}\right) ^{2}} \times \hspace{8cm} \\ & &
\frac{%
1}{\dfrac{1}{G_{0}\left( z\right) }\left[ \exp\left( -\dfrac
{G_{0}\left(
z\right) }{2}\dfrac{t}{1+\frac{t}{2}}\right) -1\right] +\dfrac{1}{%
F^{-}\left( z,0\right) }\exp\left( -\dfrac{G_{0}\left( z\right) }{2}\dfrac{t%
}{1+\frac{t}{2}}\right) }; \nonumber
\end{eqnarray}
we know
$$
f^-(\ell,t)=\frac{1}{2\pi i}
\int_{|z|=1}\frac{F^-(z,t)}{z^{\ell+1}}dz.
$$
Performing the change of variables $F^-(z,0)=P(z^d)$ and
$G_0(z)=Q(z^d)$ just like in the previous case we find
\begin{eqnarray}\nonumber
& & f^-(\ell,t) \\
&=& \frac{1}{2\pi i d}\frac{1}{\left( 1+{t \over 2}
\right)^2} \times \\ \nonumber & &
\int_{|\zeta|=1} \frac{1}{\frac{1}{Q(\zeta)}\left\{
\exp \left[ -\frac{Q(\zeta)t}{(2+t)} \right] -1
\right\}+\frac{1}{P(\zeta)}\exp \left[ -\frac{Q(\zeta)t}{(2+t)}
\right]} \frac{d\zeta}{\zeta^{1+\ell/d}}
\\ \nonumber
&=&\frac{1}{2\pi i d}\frac{1}{\left( 1+{t \over 2} \right)^2} \times
\\ \nonumber & &
\int_{|\zeta|=1+{\delta \over 2}}
\frac{1}{\frac{1}{Q(\zeta)}\left\{ \exp \left[
-\frac{Q(\zeta)t}{(2+t)} \right] -1
\right\}+\frac{1}{P(\zeta)}\exp \left[ -\frac{Q(\zeta)t}{(2+t)}
\right]} \frac{d\zeta}{\zeta^{1+\ell/d}} \\ \nonumber & &
-\frac{1/d}{\left( 1+{t \over 2} \right)^2}
\frac{1}{\zeta_t^{1+\ell/d}} \lim_{\zeta \to \zeta_t}
\frac{\zeta-\zeta_t}{\frac{1}{Q(\zeta)}\left\{ \exp \left[
-\frac{Q(\zeta)t}{(2+t)} \right] -1
\right\}+\frac{1}{P(\zeta)}\exp \left[ -\frac{Q(\zeta)t}{(2+t)}
\right]},
\end{eqnarray}
where $\delta$ is small enough and
$$
\zeta_t=1+\frac{2/t}{P'(1)+{1 \over 2}Q'(1)} \quad + \quad
\mathrm{higher} \quad \mathrm{order} \quad \mathrm{terms},
$$
is the location of the pole. Herein we have implicitly assumed
that $\ell/d \in \mathbb{N}$; otherwise $f^-(\ell,t) \equiv 0$ due
to the orthogonality of the basis of plane waves. The limit can be
calculated employing L'H\^{o}pital rule
$$
\lim_{t \to \infty} \lim_{\zeta \to \zeta_t}
\frac{\zeta-\zeta_t}{\frac{1}{Q(\zeta)}\left\{ \exp \left[
-\frac{Q(\zeta)t}{(2+t)} \right] -1
\right\}+\frac{1}{P(\zeta)}\exp \left[ -\frac{Q(\zeta)t}{(2+t)}
\right]}=-\frac{1}{P'(1)+{1 \over 2}Q'(1)}.
$$
For long times we find
$$
\frac{1}{\zeta_t^{1+\ell/d}}= \frac{1}{\left(
1+\frac{2/t}{P'(1)+{1 \over 2}Q'(1)} \right)^{1+\ell/d}} \approx
\exp \left[ -\frac{2(1+\ell/d)}{[P'(1)+{1 \over 2}Q'(1)]t}
\right].
$$
Putting all together and in the limit $\ell \gg d$, $t \gg \max
\{\frac{2}{P'(1)+Q'(1)/2},2\}=2$ (while keeping $\ell/t$ constant)
we obtain the self-similar structure
$$
\frac{4}{dt^2}\frac{1}{P'(1)+{1 \over 2}Q'(1)} \exp \left\{
-\frac{2\ell}{d[P'(1)+ {1 \over 2} Q'(1)]t} \right\}.
$$
This result is completely analogous to that of the previous case.
Taking into account that
$d[P'(1)+Q'(1)/2]=(F^-)'(1,0)+G_0'(1)/2=[(F^+)'(1,0)+(F^-)'(1,0)]/2=\ell_0/2$
is half of the total number of particles, we may write down the
general result
$$
f^\pm(\ell,t) \approx \frac{8}{t^2 \ell_0} \exp \left(
-\frac{4\ell}{\ell_0 t} \right)\sum_{m=1}^\infty \delta_{md,\ell},
$$
where the discrete Dirac comb (built as a sum of Kronecker deltas)
explicitly signals the values of $\ell$ which correspond to a
nonzero $f^\pm(\ell,t)$. We emphasize this result is identical to
the one obtained in the previous case, so this shows the asymptotic
self-similar state is independent of the initial distributions of
clusters $f^\pm(\ell,0)$ except for their zeroth mode (i.~e. the
number of clusters). Also as in the previous case, the cluster sizes
in the self-similar regime fulfill $\ell \gg d$, and the transient
time fulfills $t \gg 2$. Note that the total number of particles is
as always conserved.

\subsection{Convergence}\label{convergence}

In this section we prove rigorous convergence results to the self-similar profile formally calculated in the previous section
$$
\Phi \left( {\ell \over t} \right)=\frac{8}{\ell_0} \exp \left(
-\frac{4\ell}{\ell_0 t} \right)\sum_{m=1}^\infty \delta_{md,\ell}.
$$
An analogous convergence result for the case examined in section~\ref{formalsymm} is an immediate corollary of the present result,
so we will not explicitely consider it here.

\begin{theorem}\label{thefun1}
There exists a suitable constant $C''$ such that
$$
\sum_{\ell=1}^{\infty} \ell^p \left| f^{\pm}(\ell,t)-
\frac{1}{t^2}\Phi \left( {\ell \over t} \right) \right| \le
\frac{C''}{t^2}.
$$
for all $1 \le p < \infty$ in the long time limit $t \to
\infty$, provided all moments
of the initial condition are bounded.
\end{theorem}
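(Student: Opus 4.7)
The plan is to build on the formal asymptotic analysis of Section~\ref{formalasymm} and promote it to a rigorous estimate by exploiting the exact Cauchy representation of $f^{\pm}(\ell,t)$ as an $\ell$-th Taylor coefficient of the generating function $F^{\pm}(z,t)$ given explicitly in formula~\eqref{fminus}. First I would deform the contour of integration from $|z|=1$ outward to $|z|=1+\delta/2$, where $\delta>0$ is chosen small enough that, inside the annulus $1<|z|<1+\delta$, the integrand has a unique simple pole, located at the point $z_t$ whose rescaled version $\zeta_t=1+\alpha/t+O(1/t^2)$ was identified in Section~\ref{formalasymm}. Under the assumption that all moments of the initial condition are bounded (equivalently, that $F_0$ extends holomorphically to a strictly larger disc), this deformation is legitimate. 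The residue theorem then yields a clean decomposition
\begin{equation*}
f^{\pm}(\ell,t)=R(\ell,t)+I_{\delta}(\ell,t),
\end{equation*}
in which $R$ is the residue contribution (the candidate for the self-similar piece) and $I_{\delta}$ is the remainder integral on $|z|=1+\delta/2$.

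The remainder $I_{\delta}$ is the rigorous counterpart of the boundary layer described at the end of Section~\ref{formalsymm}. Away from the pole, $F^{\pm}(z,t)$ is bounded by $C/t^{2}$ uniformly on the enlarged contour, thanks to Proposition~2.5 and the fact that $|1-(t/(2+t))Q(\zeta)|$ is bounded away from zero for $|\zeta|=(1+\delta/2)^{d}$; combined with the factor $z^{-\ell-1}$ this gives the pointwise estimate $|I_{\delta}(\ell,t)|\le C(1+\delta/2)^{-\ell}/t^{2}$. Multiplying by $\ell^{p}$ and summing, the geometric factor $(1+\delta/2)^{-\ell}$ tames any polynomial weight, so that $\sum_{\ell}\ell^{p}|I_{\delta}(\ell,t)|\le C(p)/t^{2}$.

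For the residue contribution $R$ I would Taylor-expand the ingredients of the explicit formula $R(\ell,t)=\frac{4/d}{t(2+t)}\,Q(\zeta_t)/\left[Q'(\zeta_t)\zeta_t^{1+\ell/d}\right]$ (and its analogue in the asymmetric case with $P$) around $\zeta=1$ and $t=\infty$. Writing $\zeta_t^{-(1+\ell/d)}=\exp(-(1+\ell/d)\log\zeta_t)$ with $\log\zeta_t=\alpha/t+O(1/t^{2})$, the leading term reproduces exactly the self-similar profile $\frac{1}{t^{2}}\Phi(\ell/t)$, while the sub-leading contributions give a pointwise bound of the form
\begin{equation*}
\bigl|R(\ell,t)-t^{-2}\Phi(\ell/t)\bigr|\le \frac{C}{t^{2}}\,e^{-c\ell/t}\Bigl(\frac{1}{t}+\frac{\ell}{t^{2}}\Bigr)+ \frac{C}{t^{2}}e^{-c\ell}
\end{equation*}
on the self-similar scale $\ell=O(t)$, with an additional exponentially small piece handling $\ell\gg t$ via the higher-order term $O(\ell^{2}/t^{2})$ in the expansion of $\log\zeta_t$. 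Summing against $\ell^{p}$ and using that each factor of $\ell/t$ is absorbed by the Gaussian-like weight $e^{-c\ell/t}$ produces the stated $C''/t^{2}$ bound, with $C''$ depending on $p$ through finite Gamma-function constants.

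Combining the boundary-layer estimate with the residue estimate, and splitting the range of $\ell$ at, say, $\ell\asymp t\log t$ to separate the regimes where each bound is sharper, yields the claimed inequality. The main technical obstacle is the uniform-in-$\ell$ control of the residue expansion: the relative error $\ell/t^{2}$ in the Taylor expansion of $\log\zeta_t$ grows with $\ell$, so one must verify that it is always dominated by the exponential decay in $\Phi(\ell/t)$, and that the constants in the Taylor remainders for $Q,Q',P$ can be chosen uniformly on the slightly enlarged disc. The moment-boundedness hypothesis enters precisely here, because it guarantees the holomorphic extension of $F_0^{\pm}$ past $|z|=1$ needed to place the outer contour strictly beyond the unit circle and to keep all higher derivatives of $Q$ and $P$ controlled.
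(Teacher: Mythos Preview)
Your overall strategy---Cauchy representation, contour deformation to $|z|=1+\delta/2$, residue plus remainder decomposition---is exactly the paper's. The paper is in fact more cavalier than you: it simply asserts the pointwise bound $|f^{\pm}(\ell,t)-t^{-2}\Phi(\ell/t)|\le C't^{-2}(1+\delta/2)^{-\ell/d}$ and sums, without ever estimating the discrepancy between the exact residue $R(\ell,t)$ and the profile $t^{-2}\Phi(\ell/t)$.

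You correctly flag this discrepancy as the crux, but your final summation does not deliver the stated bound. Your pointwise estimate $|R(\ell,t)-t^{-2}\Phi(\ell/t)|\le Ct^{-2}e^{-c\ell/t}(t^{-1}+\ell t^{-2})$ is of the right form on the self-similar scale, yet weighting by $\ell^{p}$ and summing gives
\begin{equation*}
\sum_{\ell\ge 1}\ell^{p}\,\frac{1}{t^{2}}e^{-c\ell/t}\Bigl(\frac{1}{t}+\frac{\ell}{t^{2}}\Bigr)\sim \frac{1}{t^{3}}\sum_{\ell\ge 1}\ell^{p}e^{-c\ell/t}\sim \frac{1}{t^{3}}\cdot t^{p+1}=t^{p-2},
\end{equation*}
which is $O(t^{-1})$ already for $p=1$ and diverges for $p\ge 3$. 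The weight $e^{-c\ell/t}$ controls only powers of $\ell/t$, not of $\ell$; the ``Gamma-function constants'' you invoke come multiplied by $t^{p}$, not by $1$. The paper's short argument sidesteps this only by silently identifying the residue with the profile, so that the entire error is carried by the contour integral with its genuine geometric decay $(1+\delta/2)^{-\ell/d}$ in $\ell$---but that identification is not exact, and your more honest accounting exposes the difficulty. To actually reach $C''/t^{2}$ you would need a pointwise bound on $R(\ell,t)-t^{-2}\Phi(\ell/t)$ that decays geometrically in $\ell$ uniformly in $t$, and this does not follow from Taylor expanding $\zeta_t^{-\ell/d}$ around $\zeta=1$.
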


\begin{proof}
We found in the previous section
\begin{eqnarray}\nonumber
& & f^-(\ell,t) \\ \nonumber &=& \frac{1}{2\pi i d}\frac{1}{\left( 1+{t \over 2}
\right)^2} \times \\ \nonumber &&
\int_{|\zeta|=1}
\frac{1}{\frac{1}{Q(\zeta)}\left\{ \exp \left[
-\frac{Q(\zeta)t}{(2+t)} \right] -1
\right\}+\frac{1}{P(\zeta)}\exp \left[ -\frac{Q(\zeta)t}{(2+t)}
\right]} \frac{d\zeta}{\zeta^{1+\ell/d}} \\ \nonumber & &
+\frac{1/d}{\left( 1+{t \over 2} \right)^2}
\frac{1}{\zeta_t^{1+\ell/d}} \lim_{\zeta \to \zeta_t}
\frac{\zeta-\zeta_t}{\frac{1}{Q(\zeta)}\left\{ \exp \left[
-\frac{Q(\zeta)t}{(2+t)} \right] -1
\right\}+\frac{1}{P(\zeta)}\exp \left[ -\frac{Q(\zeta)t}{(2+t)}
\right]}
\\ \nonumber
&=&\frac{1}{2\pi i d}\frac{1}{\left( 1+{t \over 2} \right)^2} \times
\\ \nonumber
& & \int_{|\zeta|=1+{\delta \over 2}}
\frac{1}{\frac{1}{Q(\zeta)}\left\{ \exp \left[
-\frac{Q(\zeta)t}{(2+t)} \right] -1
\right\}+\frac{1}{P(\zeta)}\exp \left[ -\frac{Q(\zeta)t}{(2+t)}
\right]} \frac{d\zeta}{\zeta^{1+\ell/d}}.
\end{eqnarray}
This equality holds as a consequence of the finiteness we imposed
on the different moments of the initial condition, i.~e.
$$
\sum_{\ell=1}^\infty \ell^p f^\pm(\ell,0) < \infty \qquad 1 \le p <
\infty.
$$
Note this is equivalent to requiring the following bound for the
initial condition
$$
f^{\pm}(\ell,0) \le \frac{C}{(1+\delta)^{\ell}}
$$
for any $C,\delta>0$, see \cite{penrose}. This assures that the
transformed functions $F^{\pm}(z,t)$ are analytic in the open disc
$|z|<1+\delta$, and so the contour deformation of the above
complex integrals makes sense (note that this requirement is
sharp). If there are no more poles in the integrand for
$|z|<1+\delta$ apart from the aforementioned roots of unity,
$z^{d}=1$, we have
$$
\left| f^{\pm}(\ell,t)- \frac{1}{t^2}\Phi \left( {\ell \over t}
\right) \right| \le \frac{C'}{t^2}\left(1 + {\delta \over 2}
\right)^{-\ell/d},
$$
for some constant $C'$ and where
$$
\frac{1}{t^2}\Phi \left( {\ell \over t} \right)=\frac{8}{t^2 \ell_0}
\exp \left( -\frac{4\ell}{\ell_0 t} \right)\sum_{m=1}^\infty \delta_{md,\ell}
$$
is the self-similar form. This result implies pointwise
convergence of the solution $f^{\pm}(\ell,t)$ to the self-similar
form $t^{-2}\Phi(\ell/t)$ uniformly in $\ell$. Furthermore, the
decay in $\ell$ is strong enough to have the estimate stated in
the theorem.
\end{proof}

\begin{remark}
Note that the integral
$$
\int_{|\zeta|=1+{\delta \over 2}}
\frac{1}{\frac{1}{Q(\zeta)}\left\{ \exp \left[
-\frac{Q(\zeta)t}{(2+t)} \right] -1
\right\}+\frac{1}{P(\zeta)}\exp \left[ -\frac{Q(\zeta)t}{(2+t)}
\right]} \frac{d\zeta}{\zeta^{1+\ell/d}}
$$
can be bounded by a constant $C$ if the integrand contains no
poles. This is not necessarily the case for $|\zeta|=1+\delta/2$.
But taking into account that $Q(\zeta)$ and $P(\zeta)$ are
holomorphic in $|z|<1+\delta$ implies that the denominator (which
is not zero on $|\zeta| \le 1, \zeta \neq 1$) is meromorphic, and
thus it has a finite number of zeros in the open disk
$|\zeta|<1+\delta$. A direct consequence of this fact is that we
can choose some $0<\delta_1<\delta$ small enough such that the
integral along the line $|\zeta|=1+\delta_1$ has no poles and it
encloses an area in which $\zeta=1$ is the only pole.
\end{remark}

\subsection{The Sharp Case}\label{sharp}

In this section we are interested in obtaining asymptotic
convergence results with the weakest possible hypothesis on $f^\pm
\left( \ell ,0\right)$. In particular, we will assume the
finiteness of the first moment of the initial condition only:
\begin{equation*}
\sum_{\ell =1}^{\infty }\ell f^\pm \left( \ell ,0\right) <\infty.
\end{equation*}
Note this is equivalent to have initial conditions of the form
\begin{equation*}
f^\pm \left( \ell ,0\right) \approx \frac{1}{\ell ^{\alpha }},
\end{equation*}
except for slow variation functions, for $1<\alpha \leq 2$. No
further assumptions will be imposed in this section.
As in the previous section we will only rigorously prove the formal asymptotics in
section~\ref{formalasymm}, as the corresponding result for section~\ref{formalsymm} is
a direct corolary of this proof.

We start proving the following technical result:

\begin{lemma}
$Q(z^d) \in C^1 \left( \overline{B_1(0)} \right)$.
\end{lemma}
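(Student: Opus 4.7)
The plan is to identify $Q(z^d)$ with the concrete power series
$$
G_0(z) = F^+(z,0) - F^-(z,0) = \sum_{\ell=1}^\infty c_\ell\, z^\ell, \qquad c_\ell := f^+(\ell,0) - f^-(\ell,0),
$$
and then deduce the claimed $C^1$ regularity up to the unit circle from the absolute and uniform convergence, on $\overline{B_1(0)}$, of this series together with its termwise derivative.

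First I would use the nonnegativity of $f^\pm(\ell,0)$ to bound $|c_\ell| \le f^+(\ell,0) + f^-(\ell,0)$, so that
$$
\sum_{\ell=1}^\infty \ell\,|c_\ell| \;\le\; M^+(0) + M^-(0) \;=\; \ell_0 \;<\; \infty
$$
by the standing hypothesis of finite first moment. In particular $\sum_\ell |c_\ell| < \infty$ follows as well.

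Then I would apply the Weierstrass $M$-test twice: once with $M_\ell = |c_\ell|$ to the series for $G_0$ itself, yielding continuity of $Q(z^d)$ on $\overline{B_1(0)}$; and once with $M_\ell = \ell\,|c_\ell|$ to the formally differentiated series $\sum_\ell \ell\, c_\ell\, z^{\ell-1}$, which shows that the holomorphic derivative $G_0'$, defined on the open disc, extends continuously up to the unit circle and agrees with the boundary values of the differentiated series. Since $w=z^d$ is a polynomial in $z$, the chain rule then transfers this $C^1$ regularity from the variable $z$ to the variable $w$, giving the stated conclusion for $Q$.

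I do not foresee a genuine obstacle. The whole lemma is a standard consequence of termwise differentiation of an absolutely convergent power series, and the required absolute summability $\sum_\ell \ell\,|c_\ell| < \infty$ is supplied directly by the finiteness of the first moment of $f^\pm(\cdot,0)$. The only point worth being explicit about is the intended meaning of $C^1(\overline{B_1(0)})$: I read it as the statement that both $Q(z^d)$ and its (complex) derivative admit continuous extensions from the open disc to its closure, which is precisely what the two uniform-convergence statements provide.
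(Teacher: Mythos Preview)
Your proposal is correct and follows essentially the same line as the paper: identify $Q(z^d)$ with $G_0(z)=F^+(z,0)-F^-(z,0)$, bound the coefficients by $f^+(\ell,0)+f^-(\ell,0)$, and use the finite first moment $\sum_\ell \ell f^\pm(\ell,0)<\infty$ to control the derivative on the closed disc. The paper merely records the bound $|Q'(z^d)|\le \sum_\ell \ell f^+(\ell,0)+\sum_\ell \ell f^-(\ell,0)<\infty$ and invokes analyticity in the open disc; your version is a bit more explicit in citing the Weierstrass $M$-test for uniform convergence, but the substance is identical.
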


\begin{proof}
Let us remind that
$$
|Q'(z^d)|=|(F^+)'(z,0)-(F^-)'(z,0)| \le \sum_{\ell=1}^\infty
f^{+}(\ell,0) \ell + \sum_{\ell=1}^\infty f^{-}(\ell,0) \ell <
\infty,
$$
for all $z \in \overline{B_1(0)}$. Using this fact together with
the analyticity of $Q(z^d)$ in $B_1(0)$ the result follows.
\end{proof}

\begin{corollary}
As a consequence we may write
$$
Q(\xi)=Q'(1)(\xi-1)+r(\xi),
$$
where the rest function $r(\xi)=o \left( |\xi-1| \right)$ as $\xi
\to 1$, uniformly in $\xi$ for $|\xi| \le 1$.
\end{corollary}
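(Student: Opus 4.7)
The plan is a direct first-order Taylor expansion of $Q$ at $\xi = 1$, leveraging the $C^1$ regularity established in the preceding lemma. First I would observe that, in the symmetric setting $N^+(0) = N^-(0)$ considered here, we have $Q(1) = G_{0}(1) = F^{+}(1,0) - F^{-}(1,0) = N^{+}(0) - N^{-}(0) = 0$. Consequently no constant term appears in the expansion and the claim reduces to showing
$$
Q(\xi) - Q'(1)(\xi - 1) = o(|\xi - 1|) \quad \text{as } \xi \to 1, \text{ uniformly for } |\xi| \le 1.
$$

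Next, I would promote the lemma to a statement about $Q$ as a function of $\xi$ near $\xi = 1$. The most transparent way is to inspect the power series $Q(\xi) = \sum_{n \ge 1} c_n \xi^n$: the finite first-moment assumption on $f^\pm(\ell,0)$ gives $\sum_n n |c_n| < \infty$, so the termwise derivative $Q'(\xi) = \sum_{n \ge 1} n c_n \xi^{n-1}$ converges absolutely and uniformly on $\overline{B_1(0)}$. Hence $Q' \in C(\overline{B_1(0)})$ and, by compactness, is uniformly continuous there: for every $\varepsilon > 0$ there is $\delta > 0$ such that $|w - 1| < \delta$ implies $|Q'(w) - Q'(1)| < \varepsilon$.

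Finally, I would use convexity of $\overline{B_1(0)}$: for every $\xi$ with $|\xi| \le 1$, the segment $[1,\xi]$ lies entirely in $\overline{B_1(0)}$, so the integral form of Taylor's formula applies and yields
$$
Q(\xi) = Q(1) + (\xi - 1) \int_{0}^{1} Q'\bigl(1 + s(\xi - 1)\bigr)\, ds = Q'(1)(\xi - 1) + r(\xi),
$$
where
$$
r(\xi) = (\xi - 1) \int_{0}^{1} \bigl[ Q'\bigl(1 + s(\xi - 1)\bigr) - Q'(1) \bigr] ds.
$$
Bounding the integrand via the uniform continuity of $Q'$ gives $|r(\xi)| \le \varepsilon\, |\xi - 1|$ whenever $|\xi - 1| < \delta$, and the bound on $\delta$ does not depend on the direction of approach of $\xi$ to $1$ within the closed disc. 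This is exactly the uniform $o(|\xi - 1|)$ estimate claimed. The only mild subtlety worth flagging is keeping the regularity up to the boundary of the disc; this is precisely what the preceding lemma (equivalently, the finite first-moment assumption) provides, so no further work is needed.
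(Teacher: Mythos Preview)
Your proof is correct and follows precisely the route the paper intends: the corollary is stated without proof as an immediate consequence of the preceding $C^1$ lemma, and your argument via the integral Taylor remainder and uniform continuity of $Q'$ on the closed disc is the standard way to make that consequence explicit. Your care in passing from the lemma's statement about $z\mapsto Q(z^d)$ to regularity of $Q$ in the variable $\xi$ (via $\sum n|c_n|<\infty$) is a useful clarification that the paper glosses over.
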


\begin{remark}
Note we equivalently have
$$
P(\xi)=1+P'(1)(\xi-1)+R(\xi),
$$
where the rest function $R(\xi)=o \left( |\xi-1| \right)$ as $\xi
\to 1$, uniformly in $\xi$ for $|\xi| \le 1$.
\end{remark}

Now we list the main result of this section

\begin{theorem}\label{thefun2}
The following convergence property
$$
\sup_{\ell \ge 1} \, \ell \left| f^{\pm}(\ell,t)-
\frac{1}{t^2}\Phi \left( {\ell \over t} \right) \right| =
o\left(t^{-1}\right)
$$
holds true in the long time limit $t \to \infty$, provided the first moment of the
initial condition is bounded.
\end{theorem}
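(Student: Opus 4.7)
The plan is to mirror the residue-versus-remainder decomposition used in the proof of Theorem~\ref{thefun1}, but with the crucial difference that the contour $|\zeta|=1$ cannot be deformed outwards: with only the first moment finite, the generating functions are merely $C^{1}$ up to the boundary (as the preceding lemma ensures for $Q$, and an identical argument gives for $P$), not analytic in a larger disc. The whole analysis must therefore be performed on $|\zeta|=1$ itself. Starting from the integral representation
$$
f^{-}(\ell,t)=\frac{1}{2\pi i\, d\,(1+t/2)^{2}}\int_{|\zeta|=1}\frac{1}{D(\zeta,t)}\frac{d\zeta}{\zeta^{1+\ell/d}}
$$
derived in Section~\ref{asymm}, I would use the expansions $Q(\zeta)=Q'(1)(\zeta-1)+r(\zeta)$ and $P(\zeta)=1+P'(1)(\zeta-1)+R(\zeta)$ provided by the corollary, together with a first-order expansion of the exponential in $1/t$, to write
$$
D(\zeta,t)=\alpha\,(\zeta-\zeta_{t})+\varrho(\zeta,t),\qquad\alpha:=P'(1)+\tfrac{1}{2}Q'(1),
$$
with the simple zero at $\zeta_{t}=1+(2/t)/\alpha+O(t^{-2})$ just outside the unit circle and remainder $\varrho(\zeta,t)=o(|\zeta-1|)+O(|\zeta-1|/t)+O(t^{-2})$. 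This yields the splitting
$$
\frac{1}{D(\zeta,t)}=\frac{A_{t}}{\zeta-\zeta_{t}}+E(\zeta,t),\qquad A_{t}:=\frac{1}{\partial_{\zeta}D(\zeta_{t},t)},
$$
where $E(\cdot,t)$ is $C^{1}$ on $|\zeta|=1$.

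The model part $A_{t}/(\zeta-\zeta_{t})$, expanded as a convergent geometric series inside $|\zeta|<|\zeta_{t}|$, contributes exactly $-A_{t}/\zeta_{t}^{1+\ell/d}$ to the integral; using $A_{t}\to -1/\alpha$, $\zeta_{t}^{\ell/d}\to\exp[2\ell/(d\alpha t)]$ and the identity $d\alpha=\ell_{0}/2$ already exploited in Section~\ref{formalasymm}, this piece reproduces precisely $t^{-2}\Phi(\ell/t)$. The remaining contribution
$$
\mathcal{E}(\ell,t):=\frac{1}{2\pi i\, d\,(1+t/2)^{2}}\int_{|\zeta|=1}E(\zeta,t)\,\frac{d\zeta}{\zeta^{1+\ell/d}}
$$
I would estimate by integration by parts: since $\zeta^{-1-\ell/d}=-(d/\ell)\partial_{\zeta}\zeta^{-\ell/d}$ and the contour has no boundary,
$$
|\mathcal{E}(\ell,t)|\le \frac{C}{\ell\,t^{2}}\int_{|\zeta|=1}|\partial_{\zeta}E(\zeta,t)|\,|d\zeta|.
$$
The factor $1/\ell$ is the decisive gain: it suffices to prove $\int_{|\zeta|=1}|\partial_{\zeta}E|\,|d\zeta|=o(t)$ as $t\to\infty$, which then gives $\ell|\mathcal{E}|=o(1/t)$ uniformly in $\ell$ and, combined with the already-identified self-similar piece, closes the proof.

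The hard part is precisely this derivative integral. Away from $\zeta=1$, the denominator $D$ is bounded below uniformly in $t$ (by the proposition in Section~\ref{asymm}), so $E$ and $\partial_{\zeta}E$ are bounded by constants independent of $t$ and contribute $O(1)$ to the integral. In the arc $|\zeta-1|\le C/t$ around $\zeta=1$, the explicit form
$$
E(\zeta,t)=-\frac{\varrho(\zeta,t)/\alpha}{(\zeta-\zeta_{t})\bigl(\zeta-\zeta_{t}+\varrho(\zeta,t)/\alpha\bigr)},
$$
together with the bound $|\zeta-\zeta_{t}|\ge c/t$ valid on the unit circle (since $\zeta_{t}$ lies strictly outside it) and with $\varrho=o(|\zeta-1|)+O(|\zeta-1|/t)+O(t^{-2})$, yields a pointwise estimate $|E|\le o(1)\cdot t$; a Cauchy-type differentiation estimate on the arc then upgrades this to $|\partial_{\zeta}E|\le o(1)\cdot t^{2}$, and integration over the arc of length $O(1/t)$ produces the required $o(t)$ bound. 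The delicate point—and the main obstacle—is converting the qualitative little-$o$ smallness of $r$ and $R$ (which is all the first-moment hypothesis provides) into a quantitative little-$o$ smallness of $\partial_{\zeta}E$ uniform in $t$: this has to be done via the Cauchy integral formula for derivatives on small arcs rather than through Taylor's theorem, since the latter would require control of second moments.
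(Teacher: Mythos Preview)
Your overall plan --- subtract a rational model capturing the pole near $\zeta_t$, identify the model's contribution as $t^{-2}\Phi(\ell/t)$, then integrate the remainder by parts to gain a factor $1/\ell$ and show the resulting derivative integral is $o(t)$ --- is exactly the architecture of the paper's proof. But two technical steps in your execution do not go through as written.

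First, the ``Cauchy-type differentiation estimate'' you invoke to pass from $|E|\le o(1)\cdot t$ to $|\partial_\zeta E|\le o(1)\cdot t^2$ on the arc $|\zeta-1|\le C/t$ is not available: under the first-moment hypothesis $P,Q$ are merely $C^1$ on the closed disc, so $D$ (and hence $E$) is analytic only in the \emph{open} unit disc and $C^1$ up to the boundary. A Cauchy estimate at a point $\zeta_0$ with $|\zeta_0|=1$ would require analyticity on a full disc around $\zeta_0$, which is precisely what you do not have. The paper avoids this entirely by differentiating the error term \emph{explicitly}. Writing the error numerator as $S_1(\zeta,t)$ with $S_1=o(|\zeta-1|)$, the decisive observation is that $\partial_\zeta S_1=o(1)$ as $\zeta\to 1$: this follows directly from $P,Q\in C^1(\overline{B_1(0)})$, since $r'(\zeta)=Q'(\zeta)-Q'(1)\to 0$ by continuity of $Q'$, and likewise for $R$. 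No second-moment control is needed. Your closing remark that a direct Taylor approach ``would require control of second moments'' is therefore mistaken --- the $C^1$ regularity established in the preceding lemma is already enough for a direct derivative bound.

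Second, your region splitting leaves the intermediate zone $C/t\le|\zeta-1|\le\delta_1$ unaccounted for: there $D$ is \emph{not} bounded below uniformly in $t$ (only $\gtrsim|\zeta-1|$), so your ``away from $\zeta=1$'' argument does not cover it. The paper splits instead at a \emph{fixed} small $\delta_1$ and on the whole near region $|\zeta-1|\le\delta_1$ uses the uniform lower bounds $|H(\zeta,t)|\ge\epsilon_0(|\zeta-1|+t^{-1})$ and $\bigl|\tfrac{2}{2+t}-\lambda(t)(\zeta-1)\bigr|\ge\epsilon_0(|\zeta-1|+t^{-1})$. Combined with $|S_1|=o(|\zeta-1|)$ and $|\partial_\zeta S_1|=o(1)$, the derivative integral over this region is bounded by
\[
\frac{o(1)}{t^2}\int_{|\zeta-1|\le\delta_1}\frac{|d\zeta|}{(|\zeta-1|+t^{-1})^2}
+\frac{o(1)}{t^2}\int_{|\zeta-1|\le\delta_1}\frac{|\zeta-1|\,|d\zeta|}{(|\zeta-1|+t^{-1})^3}
=\frac{o(1)}{t},
\]
which closes the argument. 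A related minor point: defining $\zeta_t$ as an actual zero of $D$ and $A_t=1/\partial_\zeta D(\zeta_t,t)$ is ill-posed here, since $D$ need not extend analytically past $|\zeta|=1$; the paper sidesteps this by subtracting the explicit rational function $\bigl[\tfrac{2}{2+t}-\lambda(t)(\zeta-1)\bigr]^{-1}$, whose pole is well-defined regardless of the regularity of the data.
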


\begin{proof}
Consider our solution
$$
f^-(\ell,t)=\frac{1}{2 \pi i d}
\frac{1}{(1+t/2)^2}\int_{|\zeta|=1}\frac{1}{H(\zeta,t)}\frac{d\zeta}{\zeta^{1+\ell/d}},
$$
where
$$
H(\zeta,t)=\frac{1}{Q(\zeta)}\left\{ \exp \left[
-\frac{Q(\zeta)t}{(2+t)} \right] -1
\right\}+\frac{1}{P(\zeta)}\exp \left[ -\frac{Q(\zeta)t}{(2+t)}
\right].
$$
Expanding around $\zeta = 1$ we find
$$
H(\zeta,t)=\frac{2}{2+t}-(\zeta-1)\lambda(t)-R(\zeta)-a(t)r(\zeta)+O(|\zeta-1|^2),
$$
and
$$
\frac{1}{H(\zeta,t)}-\frac{1}{\frac{2}{2+t}-\lambda(t)(\zeta-1)}=\frac{a(t)r(t)-S(\zeta,t)}{H(\zeta,t)\left[\frac{2}{2+t}-\lambda(t)(\zeta-1)
\right]},
$$
where
\begin{eqnarray} \nonumber
S(\zeta,t) &=& H(\zeta,t)+\lambda(t)(\zeta-1)+a(t)r(\zeta)-\frac{2}{2+t}=-R(\zeta)+O(|\zeta-1|^2),
\\ \nonumber
\lambda(t) &=& P'(1)+Q'(1) \frac{t(4+t)}{2(2+t)^2},
\end{eqnarray}
and
$$
a(t) = \frac{t(4+t)}{2(2+t)^2}.
$$
Substituting we find
\begin{eqnarray}\nonumber
f^-(\ell,t)&=&\frac{1}{2 \pi i d}
\frac{1}{(1+t/2)^2}\int_{|\zeta|=1}
\frac{1}{\frac{2}{2+t}-\lambda(t)(\zeta-1)}\frac{d
\zeta}{\zeta^{1+\ell/d}} \\ \nonumber & &
-\frac{1}{2 \pi i
d}\frac{1}{(1+t/2)^2}\int_{|\zeta|=1}
\frac{a(t)r(\zeta)-S(\zeta,t)}{H(\zeta,t)\left[
\frac{2}{2+t}-\lambda(t)(\zeta-1)
\right]}\frac{d\zeta}{\zeta^{1+\ell/d}}.
\end{eqnarray}
The first summand yields the self-similar profile
$$
\frac{1}{2 \pi i d} \frac{1}{(1+t/2)^2}\int_{|\zeta|=1}
\frac{1}{\frac{2}{2+t}-\lambda(t)(\zeta-1)}\frac{d
\zeta}{\zeta^{1+\ell/d}}=\frac{1}{t^2}\Phi \left( \frac{\ell}{t}
\right),
$$
and the second summand the rest
$$
g_\ell(t)=\frac{1}{2 \pi i d}\frac{1}{(1+t/2)^2}\int_{|\zeta|=1}
\frac{-a(t)r(\zeta)+S(\zeta,t)}{H(\zeta,t)\left[
\frac{2}{2+t}-\lambda(t)(\zeta-1)
\right]}\frac{d\zeta}{\zeta^{1+\ell/d}}.
$$
Let us first derive the self-similar profile
\begin{eqnarray} \nonumber
& & \frac{1}{2 \pi i d} \frac{1}{(1+t/2)^2}\int_{|\zeta|=1}
\frac{1}{\frac{2}{2+t}-\lambda(t)(\zeta-1)}\frac{d
\zeta}{\zeta^{1+\ell/d}} \\ \nonumber
&=&\frac{1}{2 \pi i d}
\frac{1}{(1+t/2)^2}\int_{|\zeta|=1+\delta}
\frac{1}{\frac{2}{2+t}-\lambda(t)(\zeta-1)}\frac{d
\zeta}{\zeta^{1+\ell/d}} \\ \nonumber
& & -\frac{1/d}{(1+t/2)^2}\mathrm{Res}
\left[
\frac{1}{\frac{2}{2+t}-\lambda(t)(\zeta-1)}\frac{1}{\zeta^{1+\ell/d}}
,\zeta=\zeta_t \right],
\end{eqnarray}
where we have implicitly assumed that $\ell/d$ is an integer
(otherwise the integral vanishes) and where
$$
\zeta_t = 1+\frac{4/t}{2P'(1)+Q'(1)}, \qquad \mathrm{when} \qquad
t \to \infty,
$$
and $\delta>0$ is large enough so that the pole at $\zeta_t$ lies in
the area enclosed by $|z|=1+\delta$. Computing the limit at the
residue by means of l'H\^{o}pital rule we find
$$
-\frac{1/d}{(1+t/2)^2}\mathrm{Res} \left[
\frac{1}{\frac{2}{2+t}-\lambda(t)(\zeta-1)}\frac{1}{\zeta^{1+\ell/d}}
,\zeta=\zeta_t \right]=\frac{8}{t^2 \ell_0} \exp \left(
-\frac{4\ell}{\ell_0 t} \right),
$$
as in the previous cases, and the integral may be bounded as in the
last section as well
$$
\frac{1}{2 \pi i d} \frac{1}{(1+t/2)^2}\int_{|\zeta|=1+\delta}
\frac{1}{\frac{2}{2+t}-\lambda(t)(\zeta-1)}\frac{d
\zeta}{\zeta^{1+\ell/d}} \le \frac{C}{t^2}(1+\delta)^{-\ell/d},$$
leading us to the result
\begin{eqnarray}\nonumber
& & \frac{1}{2 \pi i d} \frac{1}{(1+t/2)^2}\int_{|\zeta|=1}
\frac{1}{\frac{2}{2+t}-\lambda(t)(\zeta-1)}\frac{d
\zeta}{\zeta^{1+\ell/d}} \\ \nonumber
&=& \frac{8}{t^2 \ell_0} \exp \left( -\frac{4\ell}{\ell_0 t} \right)
\sum_{m=1}^\infty \delta_{md,\ell} + \tilde{g}_\ell(t), \qquad
\tilde{g}_\ell(t) \le \frac{C}{t^2}(1+\delta)^{-\ell/d},
\end{eqnarray}
for long times and uniformly in $\ell$. Note that we have made
explicit the assumption of $\ell/d$ being an integer by means of
the introduction of the Dirac comb. Let us now return to the rest
$g_\ell(t)$. We have the following bounds on it (the constant $C$
may change from line to line):
$$
\left| \ell g_\ell(t) \right| \le \frac{C}{t^2} \left|
\int_{|\zeta|=1} \frac{S_1(\zeta,t)}{H(\zeta,t)\left[
\frac{2}{2+t}-\lambda(t)(\zeta-1) \right]} \frac{\ell
d\zeta}{\zeta^{1+\ell/d}} \right|,
$$
where $S_1(\zeta,t)=-a(t)r(\zeta)+S(\zeta,t)$. Integrating by
parts we have
\begin{eqnarray}\nonumber
& & \frac{C}{t^2} \left| \int_{|\zeta|=1}
\frac{S_1(\zeta,t)}{H(\zeta,t)\left[
\frac{2}{2+t}-\lambda(t)(\zeta-1) \right]} \frac{\ell
d\zeta}{\zeta^{1+\ell/d}} \right| \\ \nonumber
&\le& \frac{C}{t^2} \left|
\int_{|\zeta|=1} \frac{\partial}{\partial \zeta} \left\{
\frac{S_1(\zeta,t)}{H(\zeta,t)\left[
\frac{2}{2+t}-\lambda(t)(\zeta-1) \right]} \right\}
\frac{d\zeta}{\zeta^{\ell/d}} \right|
\\ \nonumber &\le&
\frac{C}{t^2} \int_{|\zeta|=1} \frac{ |\partial_\zeta
S_1(\zeta,t)|}{|H(\zeta,t)|\left|
\frac{2}{2+t}-\lambda(t)(\zeta-1) \right|} |d\zeta| +
\\ \nonumber
& & \frac{C}{t^2} \int_{|\zeta|=1} |S_1(\zeta,t)| \frac{\left|
H_\zeta(\zeta,t)\left[
\frac{2}{2+t}-\lambda(t)(\zeta-1)\right]-H(\zeta,t)\lambda(t)
\right|}{H(\zeta,t)^2\left[ \frac{2}{2+t}-\lambda(t)(\zeta-1)
\right]^2} |d\zeta|.
\end{eqnarray}
To bound these integrals we take into account
$$
|H(\zeta,t)| \ge \epsilon_0 \left( |\zeta-1| +t^{-1} \right),
\qquad \left| \frac{2}{2+t}-\lambda(t)(\zeta-1) \right| \ge
\epsilon_0 \left( |\zeta-1| +t^{-1} \right),
$$
where $\epsilon_0 > 0$ is a small enough constant and this last
quantity, together with $\lambda(t)$, admits also a constant upper
bound. These considerations lead to
\begin{equation}
\left| \ell g_\ell(t) \right| \le \frac{C}{t^2}
\int_{|\zeta|=1}\frac{|\partial_\zeta
S_1(\zeta,t)|}{(|\zeta-1|+t^{-1})^2}|d\zeta|+\frac{C}{t^2}
\int_{|\zeta|=1}\frac{|
S_1(\zeta,t)|}{(|\zeta-1|+t^{-1})^3}|d\zeta|.
\end{equation}
Using that $S_1(\zeta,t)=o(|\zeta-1|)$ and $\partial_\zeta
S_1(\zeta,t)=o(1)$ we find
\begin{eqnarray}\nonumber
& & \frac{C}{t^2} \int_{|\zeta-1| \le \delta_1}\frac{|\partial_\zeta
S_1(\zeta,t)|}{(|\zeta-1|+t^{-1})^2}|d\zeta|+\frac{C}{t^2}
\int_{|\zeta-1| \le \delta_1}\frac{|
S_1(\zeta,t)|}{(|\zeta-1|+t^{-1})^3}|d\zeta|
\\ \nonumber &\le&
\frac{o(1)}{t^2} \int_{|\zeta-1| \le
\delta_1}\frac{1}{(|\zeta-1|+t^{-1})^2}|d\zeta|+\frac{o(1)}{t^2}
\int_{|\zeta-1| \le \delta_1}
\frac{|\zeta-1|}{(|\zeta-1|+t^{-1})^3}|d\zeta| \\ \nonumber
&\le& \frac{o(1)}{t},
\end{eqnarray}
where $\delta_1 > 0$ is small enough. On the other hand we have
$$
\frac{C}{t^2} \int_{|\zeta-1| \ge \delta_1}\frac{|\partial_\zeta
S_1(\zeta,t)|}{(|\zeta-1|+t^{-1})^2}|d\zeta|+\frac{C}{t^2}
\int_{|\zeta-1| \ge \delta_1}\frac{|
S_1(\zeta,t)|}{(|\zeta-1|+t^{-1})^3}|d\zeta| \le \frac{C}{t^2},
$$
because $\partial_\zeta S_1(\zeta,t)$ and $S_1(\zeta,t)$ are
bounded on $\{|\zeta|=1\} \cap \{\zeta \neq 1\}$, as found in
section~\ref{asymm}, where we have shown that $H(\zeta,t)$
is free from zeros in $\{|\zeta| \le 1\} \cap \{\zeta \neq 1\}$
uniformly in $t$. As a consequence we have
$$
\sup_{\ell \ge 1} \left| \ell \, g_\ell(t) \right| = o \left(
\frac{1}{t} \right).
$$
As these estimates hold identically for $f^+(\ell,t)$ the final
result follows.
\end{proof}

\newpage

\section{Self-similar asymptotics for the majority kernel}
\label{majoritykernel}

\subsection{Formal calculations concerning self-similarity of the solution}

In this section we concentrate in the model given by Eq.~(\ref
{collisionkernel3}). Let us remind that the key result in this
case was the conservation of the number of particles travelling in
either direction given by Eq.~(\ref{noorder}). This phenomenology
is reminiscent of the one given by the random kernel for the
symmetric initial condition, for which order is not found at the
kinetic level either. So, given the last section results, one
would expect self-similarity of the solution in the case of the
majority kernel too, but now independently of the initial
condition.

Following this reasoning, and given the scaling found for the
solutions corresponding to the random kernel in the last section,
we formally propose the following scaling behavior
\begin{equation*}
F^+(z,t)=\frac{1}{t} \varphi^+[(z-1)t], \qquad F^-(z,t)=\frac{1}{t}
\varphi^-[(z-1)t],
\end{equation*}
for the solutions $F^\pm(z,t)$ to
Eqs.~(\ref{majority1})-(\ref{majority2}). The scaling functions
would obey the system

\begin{eqnarray}
\xi \partial_{\xi \xi} \varphi^+= [\varphi^- -\varphi^-(0)]
\partial_{\xi}
\varphi^+, \\
\xi \partial_{\xi \xi} \varphi^-=[\varphi^+ -\varphi^+(0)]
\partial_{\xi} \varphi^-,
\end{eqnarray}
where $\xi=(z-1)t$ is the self-similar variable. Note that making the substitution $W^+=\varphi^+ -\varphi^+(0)$ and $%
W^-=\varphi^- -\varphi^-(0)$ we arrive at the following system of
ordinary differential equations (ODEs)

\begin{equation*}
\xi \partial_{\xi \xi} W^+ = W^- \partial_\xi W^+, \qquad \xi \partial_{\xi \xi} W^- = W^+
\partial_\xi W^-.
\end{equation*}
Changing variables $\xi=-e^\tau$ we arrive at the four-dimensional
differential system

\begin{eqnarray}
\partial_{\tau} H^+ &=& W^- H^+ + H^+, \\
\partial_{\tau} W^+ &=& H^+, \\
\partial_{\tau} H^- &=& W^+ H^- + H^-, \\
\partial_{\tau} W^- &=& H^-.
\end{eqnarray}
By changing variables again $\psi^\pm=W^\pm+1$ we find the new
differential system
\begin{eqnarray}
\partial_{\tau} H^+ &=& \psi^- H^+, \\
\partial_{\tau} \psi^+ &=& H^+, \\
\partial_{\tau} H^- &=& \psi^+ H^-, \\
\partial_{\tau} \psi^- &=& H^-,
\end{eqnarray}
subject to the initial conditions
$$
\psi^\pm(-\infty)=1, \qquad H^\pm(- \infty)=0,
$$
and which long time behavior is
$$
H^\pm(+ \infty)=0, \qquad \psi^\pm(+\infty)=1-\varphi^\pm(0).
$$
We will interpret this differential problem as a dynamical system.
In this dynamical system one finds two invariant hyperplanes
$\{H^\pm = 0\}$ and one invariant plane $\{\psi^+=\psi^-, \,
H^+=H^-\}$. All the fixed points of this system sit in the plane $\{H^+=H^-=0\}$;
in fact, this is a degenerated plane with all its points being fixed for
the dynamical system. This system admits the first
integral of motion $E=\psi^+\psi^- -H^+ -H^-$, and it takes the value $E=1$
for the initial conditions we are considering.
All this implies that the orbits we are interested in,
that are attracted by the plane $\{H^+=H^-=0\}$
in both the positive and negative infinite time limits, approach
the hyperbola $\psi^+ \psi^-=1$ when $\tau \to \pm \infty$.

In the case $\varphi^\pm(0)=2$ we can find an analytic expression
for the heteroclinic connection. In this case $\psi^+=\psi^-
\equiv \psi$ and $H^+=H^- \equiv H$, and so
$$
\partial_{\tau} \psi= \frac{\psi^2-1}{2}.
$$
We find the solution
$$
\psi(\tau)=-\tanh \left( \frac{\tau-\tau_0}{2} \right), \qquad
H(\tau)=\frac{-1}{1+\cosh(\tau-\tau_0)},
$$
where $\tau_0$ is an arbitrary real constant. In the original
variables the solution reads
$$
F^+(z,t)=F^-(z,t)=\frac{1}{t}\frac{2 \xi_0}{(z-1)t+\xi_0},
$$
where $\xi_0=-e^{\tau_0}$ is an arbitrary negative constant.

In the general case we have not been able to obtain the profiles
$\psi^\pm$ analytically, but we can anyway recover the original
variables to find
$$
F^\pm(z,t)=\frac{1}{t} \left\{ \varphi^\pm(0)-1+\psi^\pm \left[
\ln \left( \frac{(z-1)t}{\xi_0} \right) \right] \right\},
$$
or alternatively
$$
F^\pm(z,t)=\frac{1}{t} \left\{\psi^\pm \left[ \ln \left(
\frac{(z-1)t}{\xi_0} \right) \right] -\psi^\pm(+\infty) \right\},
$$
where $\xi_0$ is an arbitrary negative constant. In the following
section we will examine in a more rigorous way the existence of
these formal expressions as well as their significance.

\subsection{Continuum Dynamics and Laplace Transform}

First of all we note that the self-similar form we have calculated
in the last section corresponds to the solution of the continuum
version of Eq.~(\ref{collisionkernel3}), which reads
\begin{equation}\label{continuum}
\partial_t f^\pm (x,t)= \int_0^x \frac{y}{x}
f^\pm(y,t)f^\mp (x-y,t)dy -f^\pm(x,t) \int_0^\infty f^\mp (y,t)dy.
\end{equation}
In this section we will focus on this continuum version rather
than on the discrete dynamics. We note that both dynamics,
continuum and discrete, should behave analogously in the case of
clusters composed by large numbers of particles, and in particular
this number must much larger than $d$.

The Laplace transform of the solution to Eq.~(\ref{continuum}) is
$$
\hat{f}^\pm(z,t)=\int_0^\infty f^\pm(x,t) e^{-zx} dx,
$$
which is defined for $\mathrm{Re}(z)>0$. The Laplace transformed
version of this equation reads
$$
\partial_t \partial_z \hat{f}^\pm(z,t)=\hat{f}^\mp(z,t)
\partial_z \hat{f}^\pm(z,t) - \hat{f}^\mp(0,t) \partial_z
\hat{f}^\pm(z,t).
$$
According to the formal developments in the previous section, we assume
the following self-similar form of the solution in Laplace space
$$
\hat{f}^\pm(z,t)=\frac{1}{t} \varphi^\pm(zt)=\frac{1}{t}
\varphi^\pm(\xi),
$$
where $\xi=zt$ is the self-similar variable. This corresponds to the
following self-similar form in real space
\begin{equation}\label{scaling}
f^\pm(x,t)=\frac{1}{t^2} \Phi^\pm \left( \frac{x}{t} \right) \, +
\, \text{correction terms,}
\end{equation}
that is reminiscent of the one found in the analysis of the random
kernel. Note the self-similar profiles $\Phi^\pm$ can be recovered
from the inverse Laplace transform
\begin{equation}\label{bromwich}
\Phi^\pm(\zeta)=\frac{1}{2 \pi i} \int_{\gamma -i\infty}^{\gamma
+i\infty} e^{\zeta \eta} \varphi^\pm(\eta) d\eta,
\end{equation}
where $\gamma \in \mathbb{R}$ is large enough and we have used the
Bromwich integral formula~\cite{arfken}.

Repeating the calculations of the previous section, employing the
same notation, we reduce the problem to studying the differential
system
\begin{subequations}
\begin{eqnarray}
\label{system1}
\partial_{\tau} H^+ &=& \psi^- H^+, \\
\label{system2}
\partial_{\tau} \psi^+ &=& H^+, \\
\label{system3}
\partial_{\tau} H^- &=& \psi^+ H^-, \\
\label{system4}
\partial_{\tau} \psi^- &=& H^-,
\end{eqnarray}
\end{subequations}
subject to the conditions
$$
\psi^\pm(-\infty)=1, \qquad H^\pm(- \infty)=0,
$$
and
$$
H^\pm(+ \infty)=0, \qquad \psi^\pm(+\infty)=1-\varphi^\pm(0).
$$
As we have shown in the previous section this system encodes the
form of the self-similar profiles $\Phi^\pm$. This differential
system, as a problem concerning real functions of real variables,
was already studied in the previous section.

In order to proceed with the inverse Laplace transform we need to
extend the solution to this differential system to the complex
plane. The idea is to perform an analytic continuation by
considering the solution of Eqs.~(\ref{system1}, \ref{system2},
\ref{system3}, \ref{system4}), whose trajectories numerically define
the self-similar profiles $\varphi^\pm$, to $\mathbb{C}$. In order
to do so one needs to be sure that this system is free of
singularities in a suitable region of the complex plane. In
particular, the inverse Laplace transform formula~(\ref{bromwich})
makes sense for those solutions free of singularities for
$\mathrm{Re}(\xi)>0$. We have checked this is actually true by numerically integrating
the differential system. The solution for
the ``$+$'' fields for a particular initial condition is represented
in Figs. \ref{p1r}, \ref{p1i}, \ref{h1r} and \ref{h1i}; the solution
of the ``$-$'' fields is completely analogous to this one. The
initial condition was chosen to be the fixed point which serves as
the $\alpha-$limit of the heteroclinic connection in real variables
plus a small positive perturbation in both $\mathrm{Im}(H^+)$ and
$\mathrm{Im}(H^-)$. If both perturbations are of negative sign then
the solution behaves analogously, but if they are of opposite sign
then the solution grows unboundedly. By using a family of initial
conditions, we have numerically checked that in fact there are no
singularities in the whole strip $|\mathrm{Im} (\tau)| \le \pi/2$,
$\mathrm{Re}(\tau)\in(-\infty, \infty)$. This implies that the
functions $\varphi^\pm$, considered as functions of $\xi$
(considered also as a complex variable) are analytic in the
half-plane $\mathrm{Re}(\xi)>0$ and therefore the function
$\Phi^\pm$ can be obtained by means of the inversion formula for the
Laplace transform, Eq.~(\ref{bromwich}). This numerically shows the
existence of scaling solutions to coagulation Eq.~(\ref{continuum})
obeying the self-similar scaling~(\ref{scaling}). On the other hand
we note have clarified under which conditions these solutions are
selected. It is reasonable to expect that for sufficiently symmetric
initial conditions they will indeed be selected. However it is not
so clear that the same will happen for very asymmetric initial
conditions. We leave this question as an open problem.

\begin{figure}[tbp]
\begin{center}
\psfig{file=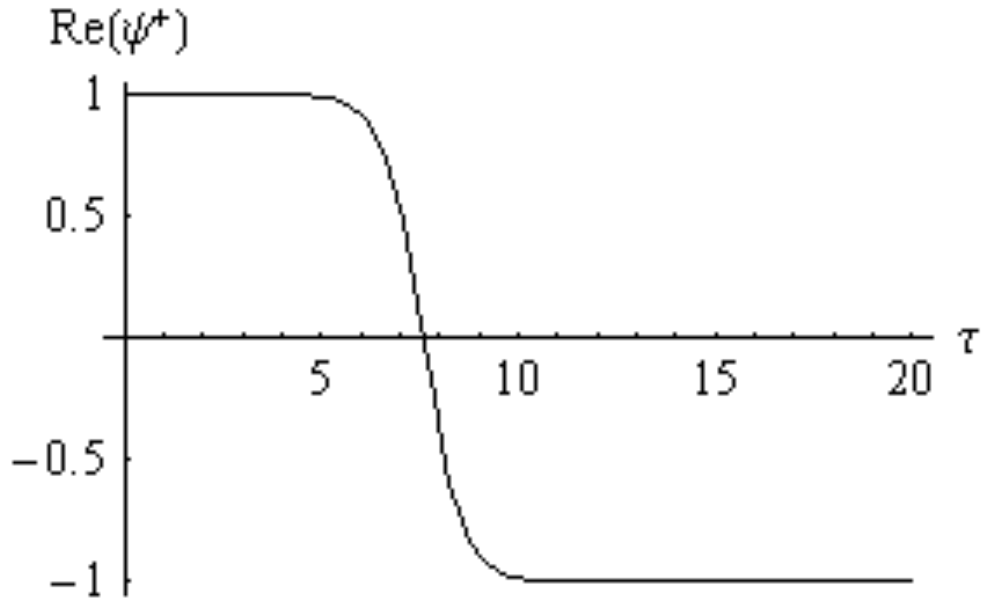,width=8cm,angle=0}
\end{center}
\caption{Numerical solution $\mathrm{Re}(\psi^+)$ versus $\tau$ of
system Eqs.~(\ref{system1}, \ref{system2},
\ref{system3}, \ref{system4}) integrated in the complex plane. The initial
conditions are $H^+(0)=10^{-3}\,i$, $H^-(0)=10^{-3}\,i$, $\psi^+(0)=1$, $\psi^-(0)=1$.} \label{p1r}
\end{figure}

\begin{figure}[tbp]
\begin{center}
\psfig{file=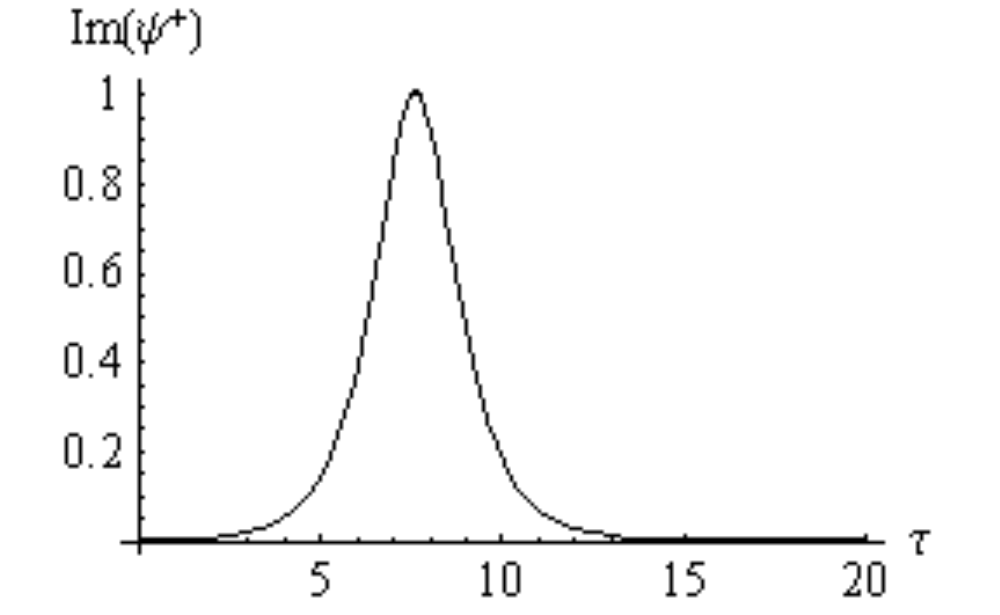,width=8cm,angle=0}
\end{center}
\caption{Numerical solution $\mathrm{Im}(\psi^+)$ versus $\tau$ of
system Eqs.~(\ref{system1}, \ref{system2},
\ref{system3}, \ref{system4}) integrated in the complex plane. The initial
conditions are $H^+(0)=10^{-3}\,i$, $H^-(0)=10^{-3}\,i$, $\psi^+(0)=1$, $\psi^-(0)=1$.} \label{p1i}
\end{figure}

\begin{figure}[tbp]
\begin{center}
\psfig{file=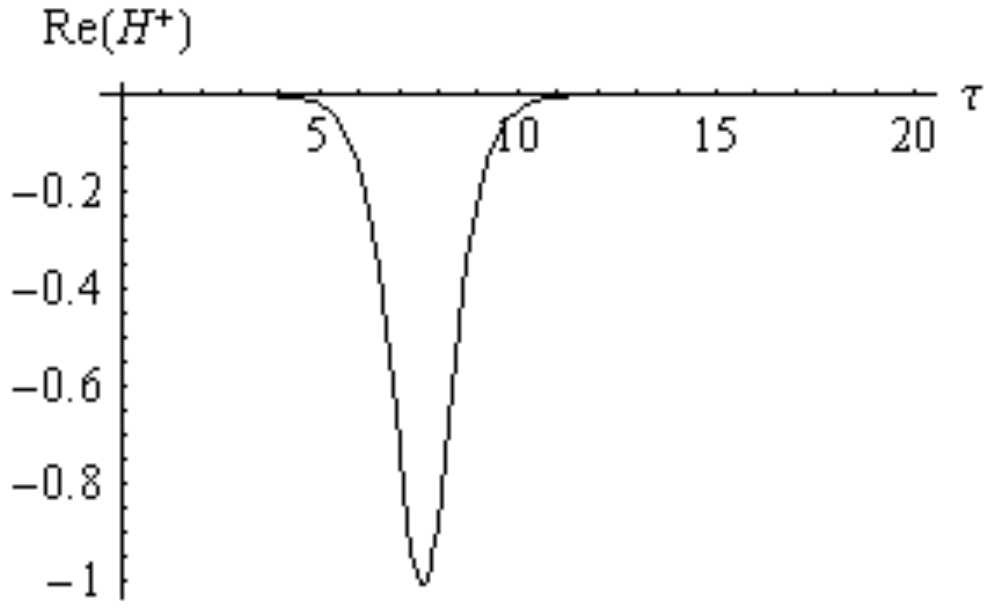,width=8cm,angle=0}
\end{center}
\caption{Numerical solution $\mathrm{Re}(H^+)$ versus $\tau$ of
system Eqs.~(\ref{system1}, \ref{system2},
\ref{system3}, \ref{system4}) integrated in the complex plane. The initial
conditions are $H^+(0)=10^{-3}\,i$, $H^-(0)=10^{-3}\,i$, $\psi^+(0)=1$, $\psi^-(0)=1$.} \label{h1r}
\end{figure}

\begin{figure}[tbp]
\begin{center}
\psfig{file=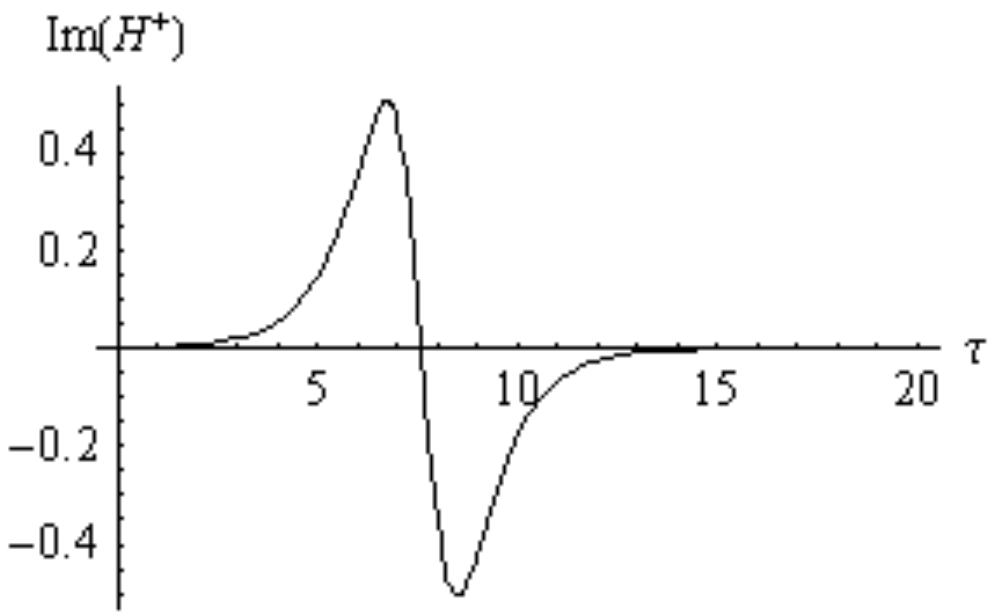,width=8cm,angle=0}
\end{center}
\caption{Numerical solution $\mathrm{Im}(H^+)$ versus $\tau$ of
system Eqs.~(\ref{system1}, \ref{system2},
\ref{system3}, \ref{system4}) integrated in the complex plane. The initial
conditions are $H^+(0)=10^{-3}\,i$, $H^-(0)=10^{-3}\,i$, $\psi^+(0)=1$, $\psi^-(0)=1$.} \label{h1i}
\end{figure}

As a final remark let us mention that it is possible to
analytically show that the functions $\varphi^\pm$ (considered as
functions of the variable $\tau$, which is now considered to be
complex) can be analytically extended to the region
$|\mathrm{Re}(\tau)|>L$, $|\mathrm{Im}(\tau)| \le \pi/2$ for some
real $L$ sufficiently large. This is so because near the fixed
points one can analytically continue the system solution by means
of a linear stability analysis. In this case we have
$$
\psi^\pm(\tau) \approx \psi^\pm(+\infty \,\, \mathrm{or} \,
-\infty)+ \phi^\pm(\tau),
$$
where the $\phi^\pm(\tau)$ are small functions. In this case we
can solve the system to find
$$
H^\pm(\tau)=H^\pm(\tau_1) \exp \left[
(\tau-\tau_1)\psi^\mp(+\infty \,\, \mathrm{or} \, -\infty) +
\int_{\tau_1}^\tau \phi^\mp(s)ds \right],
$$
$$
\phi^\pm (\tau)=\phi^\pm (\tau_1)+\int_{\tau_1}^\tau H^\pm(s)ds.
$$
We can be sure that the solution to the linearized system is free
of singularities, and thus it can be used to safely analytically
continue the solution into a neighborhood of $\tau= +\infty \,\,
\mathrm{and} \, -\infty$ in the complex plane.

\section{Numerical results}
\label{numericalresults}

\subsection{Details of the simulation}

In order to test the applicability of some of the theoretical results, we have performed extensive numerical simulations of a particular
aggregation process. In our model, we consider initially $N(t=0)\equiv N_0$ clusters, each one with only one particle. They are randomly and uniformly placed on the interval $[0,1]$ (we use periodic boundary conditions). Once the cluster locations have been set, we assign to them velocities $+1$ or $-1$. We denote by $p_0$ the initial fraction of clusters that have velocity $+1$ and, consequently, $1-p_0$ is the initial fraction with velocity $-1$. Given $p_0$ we have considered two different ways in the setting
of the initial condition:

\bigskip

(i) We run over the initial $N_0$ clusters and, for each one of them, we draw a random number $u$ extracted from a uniform distribution in the $(0,1)$ interval. If $u<p_0$ we assign the cluster the velocity $+1$, otherwise it is given the velocity $-1$. In this case the number of clusters $ N_+(t=0)$ that initially have the velocity $+1$ is a random variable that follows a binomial distribution with average value $\langle N_+(t=0)\rangle=p_0N_0$ and variance
$\sigma^2[N_+(t=0)]=p_0(1-p_0)N_0$ . Similarly, the number of clusters $N_-(t=0)$ which initially have velocity $-1$ follows a binomial distribution with average value $\langle N_-(t=0)\rangle=(1-p_0)N_0$ and variance $\sigma^2[N_-(t=0)]=p_0(1-p_0)N_0$. If we define $Z(t)=N_+(t)-N_-(t)$, we note that $Z(t=0)$ also follows a
binomial distribution with mean value $\langle
Z(t=0)\rangle=(2p_0-1)N_0$ and variance
$\sigma^2[Z(t=0)]=4p_0(1-p_0)N_0$.

\bigskip

(ii) We select randomly the precise number $N_+(t=0)=[p_0N_0]$ of
clusters ($[x]$ is the integer part of $x$) and assign to those
the velocity $+1$, while we assign the velocity $-1$ to the
reminder $N_-(t=0)=N_0-N_+(t=0)$ clusters. In this case the
initial distribution has no dispersion, or
$\sigma[{N_-}(t=0)]=\sigma[{N_+}(t=0)]=\sigma[Z(t=0)]=0$.

\bigskip

Whatever the initial condition, the dynamical process evolves in the same
way. Clusters move with their assigned velocity and, when two clusters collide, they coagulate with probability $p$
(all results in this section take $p=0.1$) such that a larger
cluster containing all particles of both clusters is formed and
the total number of clusters is reduced by one. The new aggregated cluster
moves right or left randomly with probability $1/2$. When two clusters do not coagulate, they simply pass through each other keeping their velocities. The process
is repeated until no more collisions are possible. This could
happen because only one cluster $N=1$ (containing all particles)
remains, or because all remaining clusters move with the same
speed, either $+1$ or $-1$. The process is repeated $M$ times ($M$ realizations)
starting with different random locations and velocities of the
$N_0$ clusters. We denote by $N(t)$ the random variable that counts the number of remaining clusters at time $t$.

\subsection{Results}

\subsubsection{Number and distribution of final clusters}

Let us define the random variable $t_\infty$ as the time it takes a particular
realization to reach a steady state in which no further evolution
is possible. The remaining number of clusters at this time is denoted by $N_{\infty}=N(t>t_\infty)$. In
Table~\ref{table1} we present the average and the standard deviation of both
quantities as a function of the initial number of clusters $N_0$.
We also show in the table the probability that $N_\infty=1$, i.e. that there remains a single
cluster containing all $N_0$ particles at the end of the run.

\begin{table}[h]
\begin{center}
\begin{tabular}{|c|c|c|c|c|c|}
\hline
\textrm{$N_0$} & \textrm{$ \langle N_\infty\rangle$} & \textrm{$\sigma[N_\infty]$} & \textrm{$\langle t_\infty\rangle$} & \textrm{ $\sigma[t_{\infty}]$} & \textrm{$p(N_\infty=1)$} \\
\hline
$10$ & 3.525 & 2.014 & 4.251 & 2.988 & 1.856$\times 10^{-1}$\\
$10^2$ & 11.270 & 7.855 & 3.435 & 2.651 & 5.638$\times 10^{-2}$\\
$10^3$ & 35.684& 26.290 & 1.889 & 1.9710 & 1.793$\times 10^{-2}$\\
$10^4$ & 112.86 & 84.676 & 0.9503 & 1.305 & 5.543$\times 10^{-3}$\\
$10^5$ & 355.65 & 267.02 & 0.5436 & 0.8072 & 1.550$\times 10^{-3}$\\
\hline
\end{tabular}
\bigskip
\caption{Average and standard deviation of the final number of clusters $N_\infty$ and the time $t_\infty$ it takes to reach a state where no further evolution is possible, as a function of the initial number of clusters $N_0$. These results are for the case in which the number of
initial clusters with velocity $+1$ follows a binomial distribution with a probability $p_0=1/2$.
Averages are over $M=10^5$ realizations for
$N_0=10,10^2,10^3,10^4$ and $M=4\times 10^4$ realizations for
$N_0=10^5$. The last column is the probability that there is a
single cluster at the end of the run.\label{table1}}
\end{center}
\end{table}

\begin{figure}
\includegraphics[scale=0.38,clip]{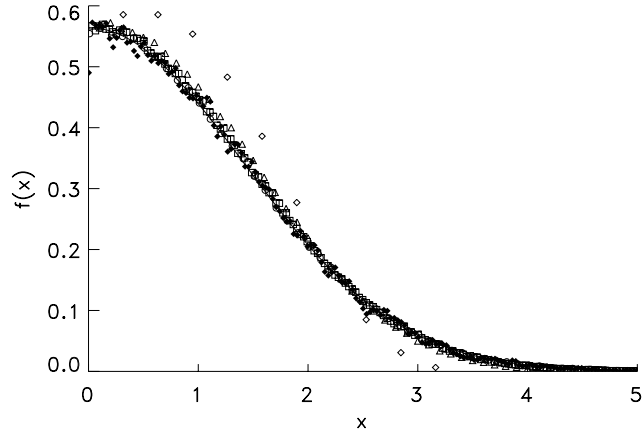}
\caption{Probability distribution of the number $N_{\infty}$ of
remaining clusters, $f(N_{\infty})$ for the same cases as in
Table~\ref{table1}: $N_0=10$, empty rhombi, $N_0=10^2$, triangles, $N_0=10^3$, squares, $N_0=10^4$, circles and $N_0=10^5$, filled rhombi. According to the scaling law discussed in the text we plot the distribution function for the variable $x=N_\infty N_0^{-1/2}$. The vertical axis is then $f(x)=N_0^{1/2}f(N_{\infty)}$. The solid line is the fit to the half-Gaussian distribution, as given by Eq.(\ref{halfgaussian}).
\label{fig1}}
\end{figure}

\begin{figure}
\includegraphics[scale=0.38,clip]{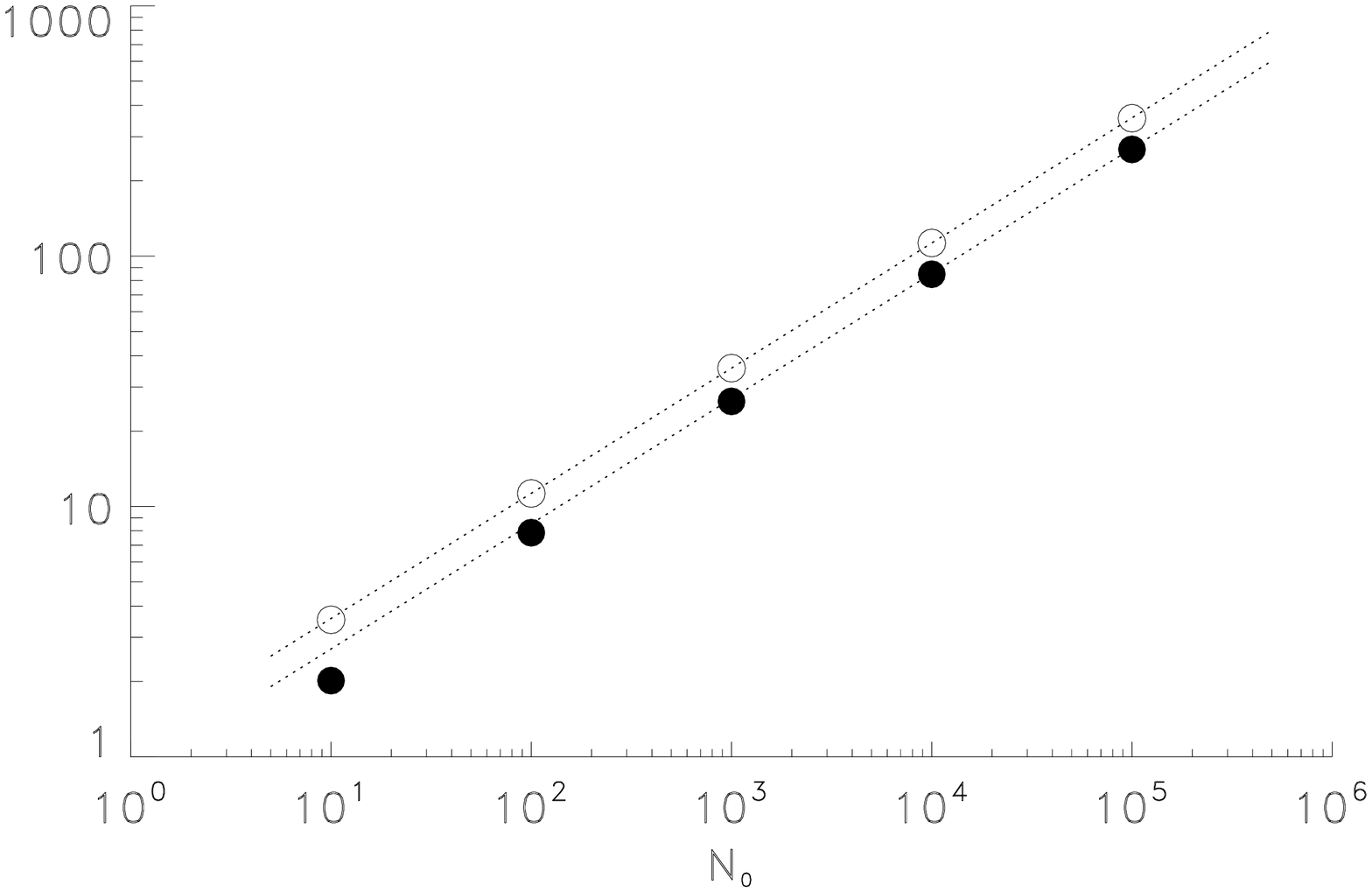}
\caption{Average value of the final number of clusters $\langle
N_\infty\rangle$ (empty symbols) and r.~m.~s. $\sigma[N_\infty]$ (filled symbols)
from Table~\ref{table1} as a function of the initial number of
clusters $N_0$, as well as the analytical expressions
in~(\ref{eq:asymp}) (dotted lines). Note that the theoretical expressions agree very well with the numerical results,
specially for large $N_0$. \label{fig2}}
\end{figure}

We have also computed from the simulations the probability distribution of the number
$N_{\infty}$ of remaining clusters, $f(N_{\infty})$. It turns out, see evidence in Fig. \ref{fig1}, that the dependence of $f(N_{\infty})$ on the intitial number of particles $N_0$ can be described by the scaling law:
\begin{equation}
\label{fN} f(N_{\infty})=N_0^{-1/2}G_+\left(N_\infty\cdot
N_0^{-1/2}\right).
\end{equation}
Moreover, $G_+(x)$ can be fitted to a half-Gaussian distribution:
\begin{equation}
G_+(x)=\frac{2}{\sigma\sqrt{2\pi}}e^{-x^2/2\sigma^2},
\hspace{2.0cm}x\in[0,\infty)
\label{halfgaussian}
\end{equation}
and $\sigma=\sqrt{2}$. This implies a mean value and standard deviation:
\begin{equation}
\langle
N_{\infty}\rangle=\frac{2}{\sqrt{\pi}}N_0^{1/2},\hspace{1.0cm}
\sigma[N_{\infty}]=\sqrt{2-\frac{4}{\pi}}N_0^{1/2}.
\label{eq:asymp}
\end{equation}
Both expressions are in good agreement with the numerical results
of the simulation, specially for large $N_0$, see Fig.
\ref{fig2}.

\begin{figure}
\includegraphics[scale=0.38,clip]{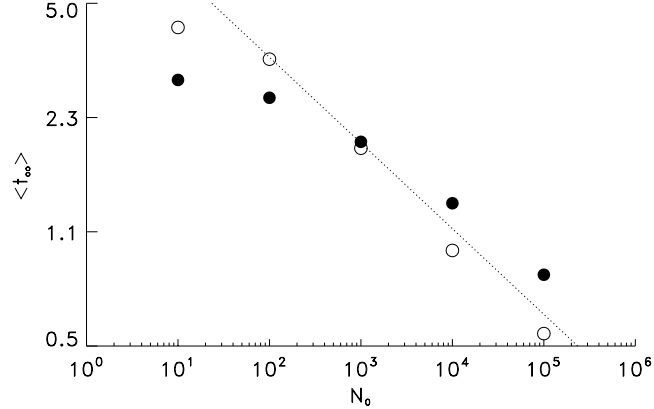}
\caption{Average value $\langle t_\infty\rangle$ (empty symbols) and
r.~m.~s. $\sigma[N_\infty
]$ (filled symbols) of the time needed to
reach the final state as a function of the initial number of
clusters $N_0$. The solid line is a fit of the form $\langle
t_\infty\rangle\sim N_0^{-a}$ with $a=1/4$.\label{fig3}}
\end{figure}

As indicated in Table~\ref{table1}, the time needed to reach equilibrium decreases with $N_0$. This {\sl a priori }Êsurprising result indicates that
the more particles there are initially, the faster the
final state is reached. Obviously, as the inital density of particles decreases with increasing $N_0$, particles are initially closer on average as $N_0$ increases and more collisions and coagulations are produced in the initial stages, so explaining this paradoxical result. The data, see Fig. \ref{fig3}, seem to suggest a
power-law relationship of the form $\langle t_\infty\rangle\sim
N_0^{-1/4}$. However, as for large $N_0$ the fluctuations are
larger than the mean value, $\sigma[t_{\infty}]>\langle
t_\infty\rangle$, the average value itself does not make much sense as a representative time to reach the asymptotic regime.

\begin{figure}
\includegraphics[scale=0.38]{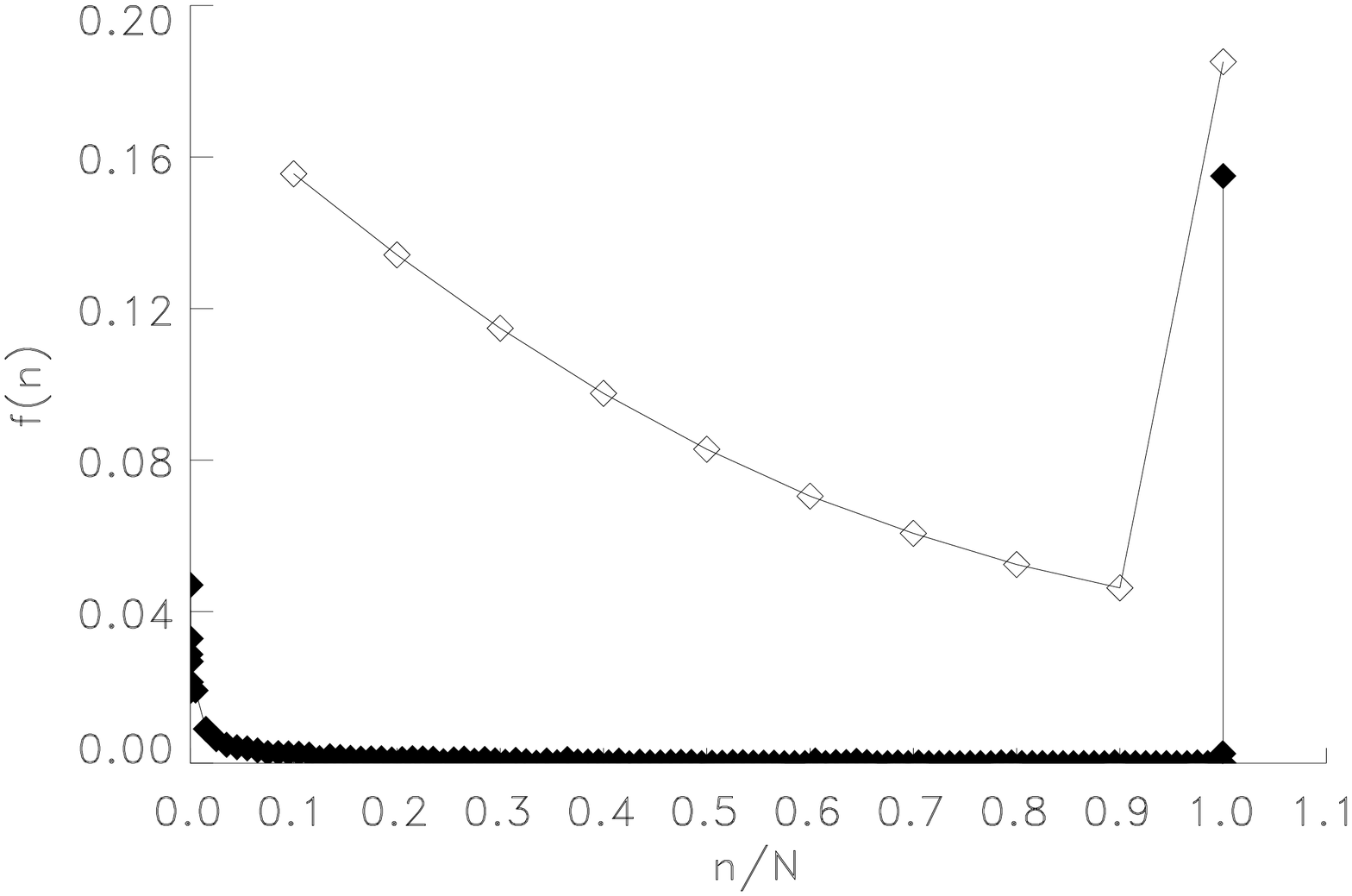}
\caption{Probability distribution $f(n)$ of the sizes of clusters
at equilibrium for $N_0=10$ (empty symbols) and $N_0=10^5$ (filled symbols, for clarity, in this case the vertical axis has been rescaled by an arbitrary factor of $100$). Note
the maximum at $n=N_0$.\label{fig4}}
\end{figure}

Finally, we have computed the probability distribution $f(n)$ of
the sizes of clusters at equilibrium. This is defined as the probability that a particle belongs
to a cluster of size $n$. More precisely, $f(n)$ is computed as the average (over realizations)
number of clusters of size $n$ multiplied by $n$ and divided by
$N_0$, such that the normalization condition is
\begin{equation}
\sum_{n=1}^{N_0}f(n)=1.
\end{equation}
The results are plotted in Figs.~\ref{fig4} for $N_0=10$ and
$N_0=10^5$. Intermediate values of $N_0$ showing a similar behavior. It is not clear from the data whether a scaling law valid for
all values of $N_0$ exists for these functions. Note
the existence of the maximum at $n=N_0$ indicating that the most probable outcome is that a particle belongs to the single cluster containing all particles. This probability can be
computed from Eq.~(\ref{fN}) taking $N_\infty=1$. In the limit of
large $N_0$ the argument of the half-Gaussian distribution is
$x=1\cdot N_0^{-1/2}\to 0$ implying that $G_+(0)=1/\sqrt{\pi}$, so that the value
at the maximum is $N_0^{-1/2}/\sqrt{\pi}$. As it can be seen in
Table~\ref{table1} and in Fig.~\ref{fig5}, this result agrees well
with the simulation results.

\begin{figure}
\includegraphics[scale=0.38]{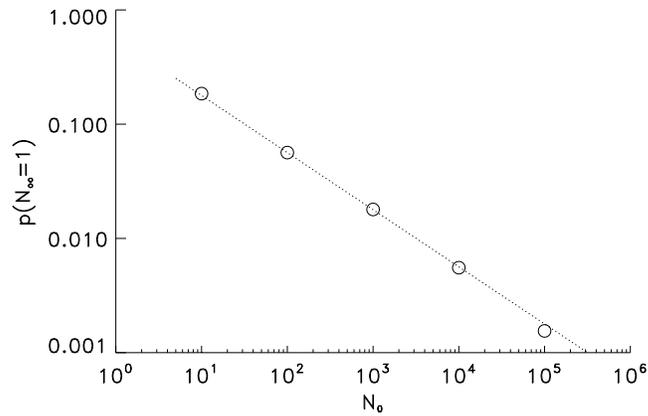}
\caption{Probability to end in a single cluster containing all
particles. With symbols we plot the simulation results from
Table~\ref{table1} and the line is the theoretical expression
$N_0^{-1/2}/\sqrt{\pi}$.\label{fig5}}
\end{figure}

\subsubsection{Time dependence of fluctuations}

\begin{figure}
\includegraphics[scale=0.38]{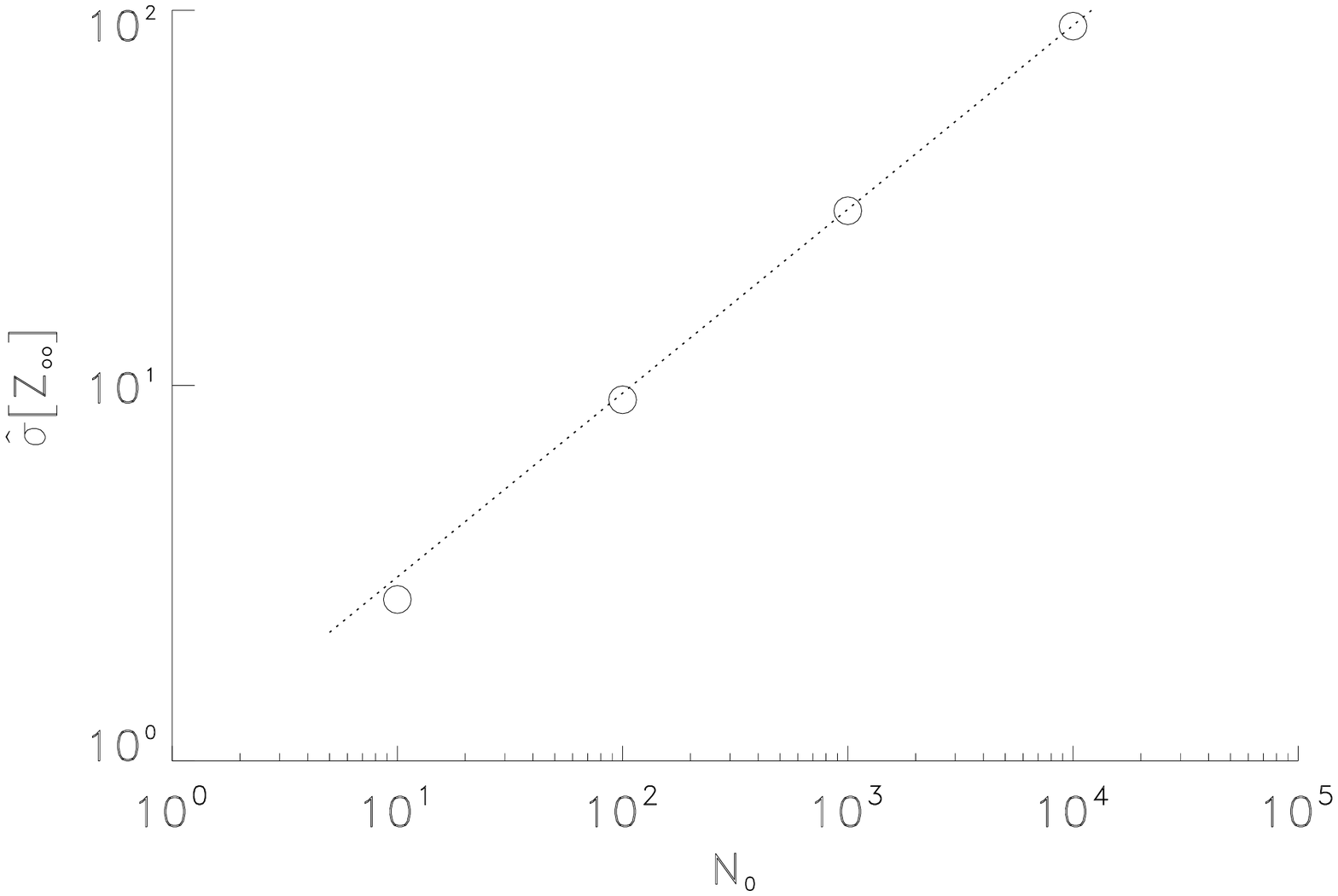}
\caption{Root mean square $\hat \sigma[Z_\infty]$ of the variable
$Z(t)=N_+(t)-N_-(t)$ at the steady state as a function of the
initial number of clusters $N_0$ for a fixed number of $p_0N_0$
clusters, $p_0=1/2$. With symbols we plot the simulation results
and the line is the function $N_0^{1/2}$.\label{fig6}}
\end{figure}

We have computed the mean value $\langle Z(t)\rangle$ and the
fluctuations $\sigma^2[Z(t)]$ of the variable $Z(t)=N_+(t)-N_-(t)$,
the difference between the number of clusters that have velocity
$+1$, $N_+(t)$ and those that have velocity $-1$, $N_-(t)$. The
data (not shown) indicates that $Z(t)$ can be well approximated by
a Gaussian distribution of mean $\langle Z(t)\rangle$ and
r.~m.~s. $\sigma[Z(t)]$. We have found that, in accordance with the
theoretical results, $\langle Z(t)\rangle$ is constant with time
whereas the fluctuations can be decomposed as
$\sigma^2[Z(t)]=\sigma^2[Z(t=0)]+\hat \sigma^2[Z(t)]$, being $\hat
\sigma^2[Z(t)]$ independent on whether the initial condition for
the number of clusters with velocity $+1$ was a fixed number
$[p_0N_0]$ ($\sigma[Z(t=0)]=0$) or it followed a binomial
distribution ($\sigma^2[Z(t=0)]=4p_0(1-p_0)N_0$). In the steady
state, the variable $Z(t)$ takes the constant value $Z_\infty$ and
the fluctuations of the variable $Z_\infty$, within the numerical
precision, are well approximated by $\hat\sigma^2[Z_\infty]=2\bar
p_0 N_0$ with $\bar p_0=\min(p_0,1-p_0)$, see evidence in Fig.~\ref{fig6} for $p_0=1/2$.

\begin{figure}
\includegraphics[scale=0.22]{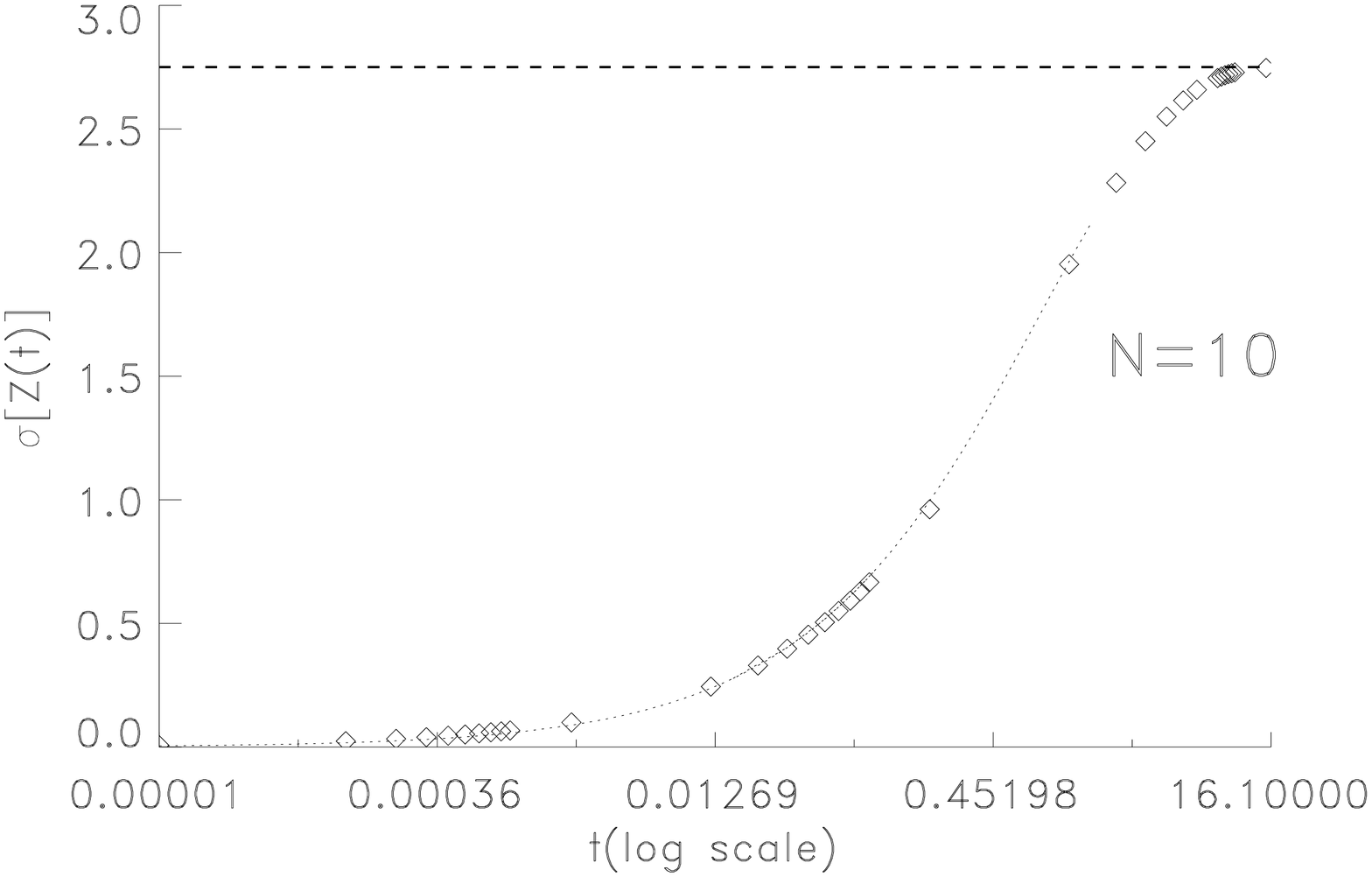}\includegraphics[scale=0.22]{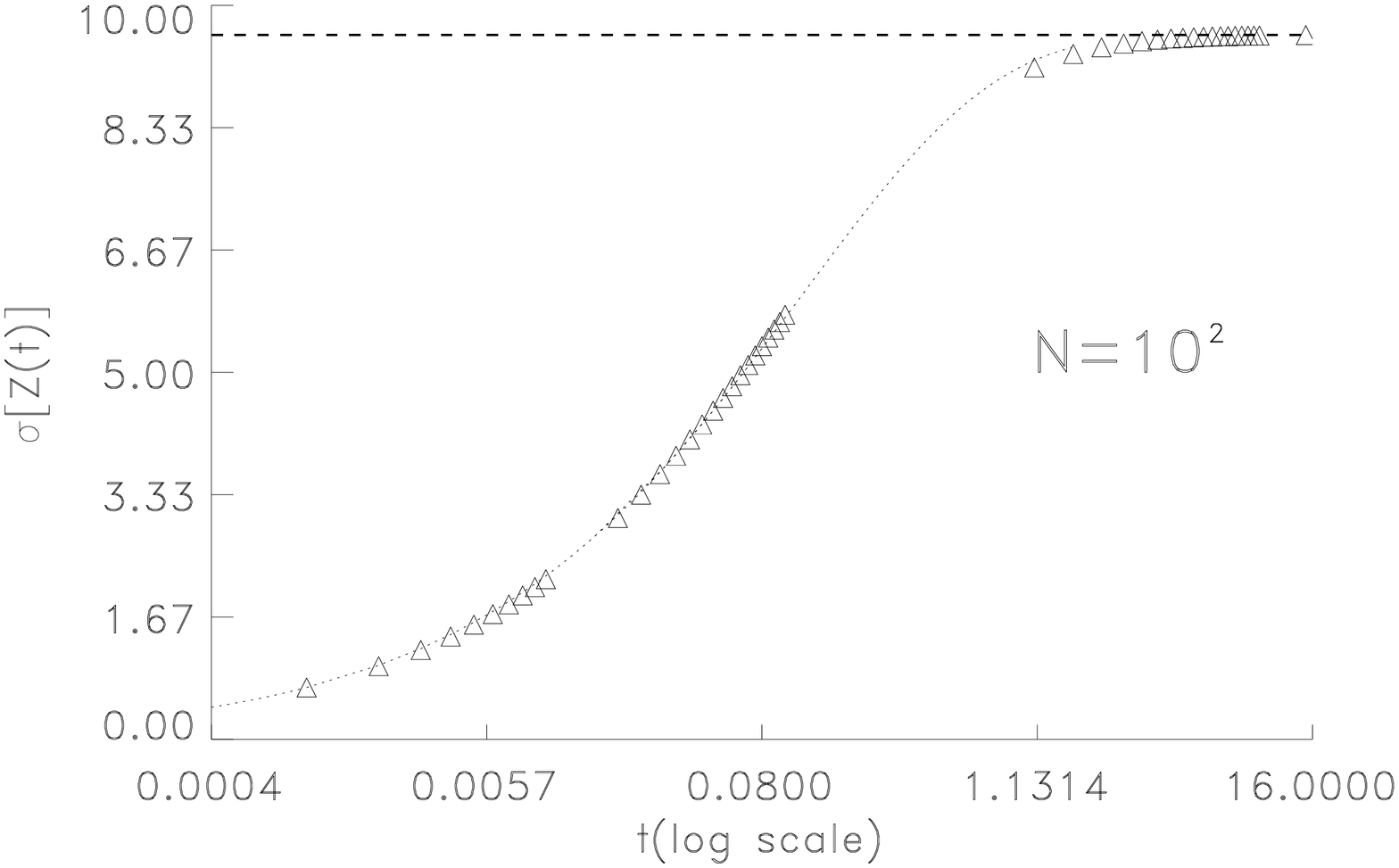}
\includegraphics[scale=0.22]{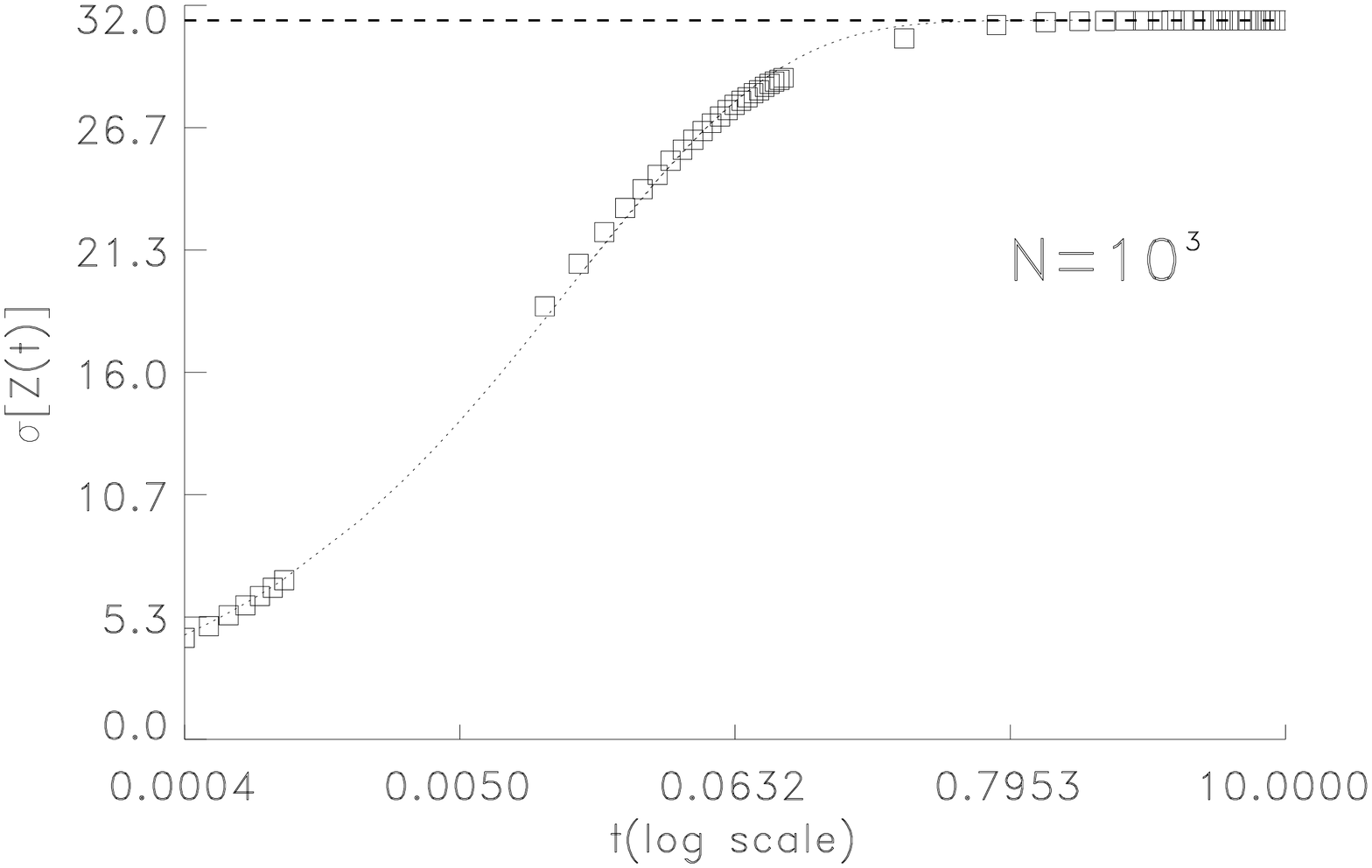}\includegraphics[scale=0.22]{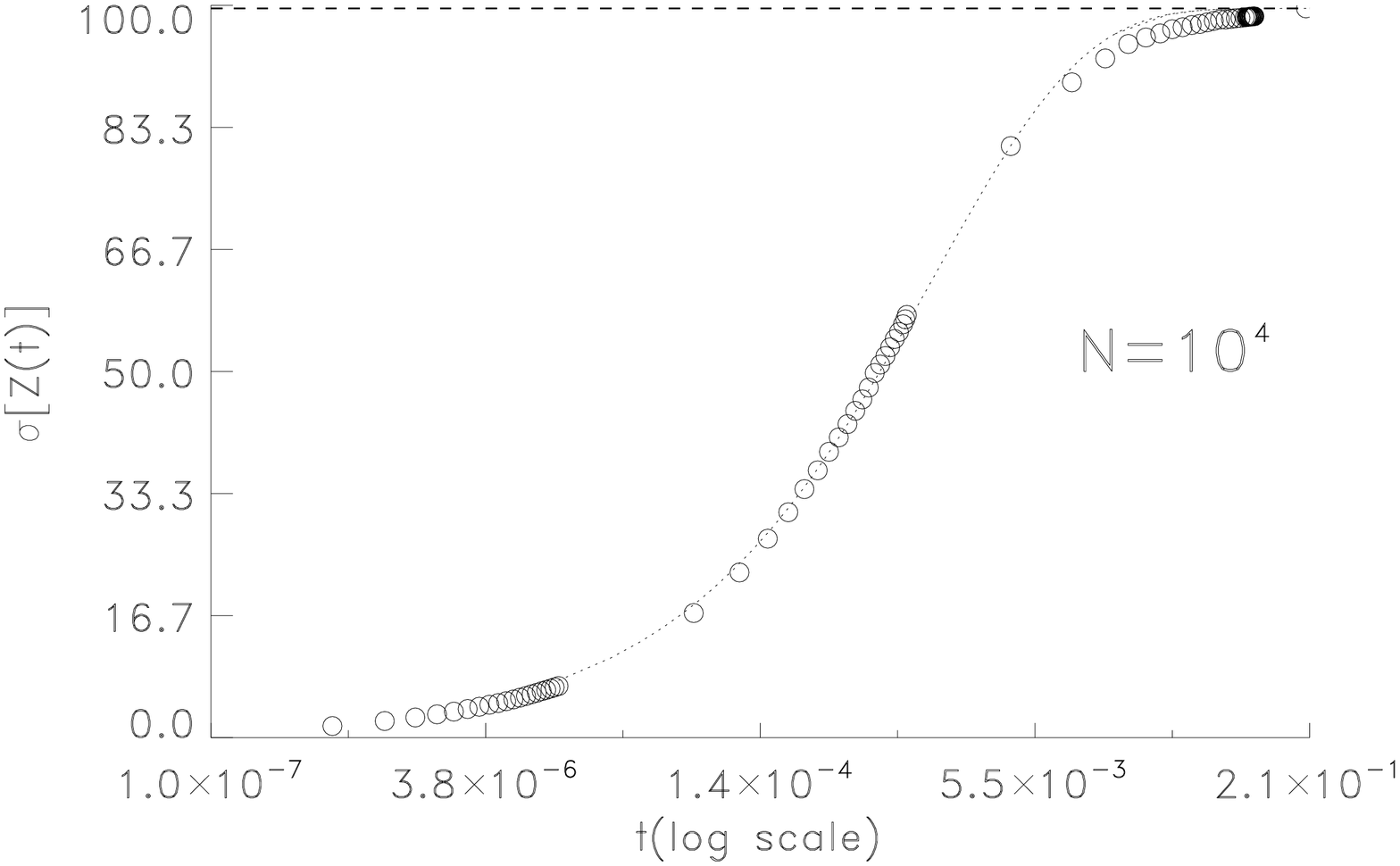}
\includegraphics[scale=0.35]{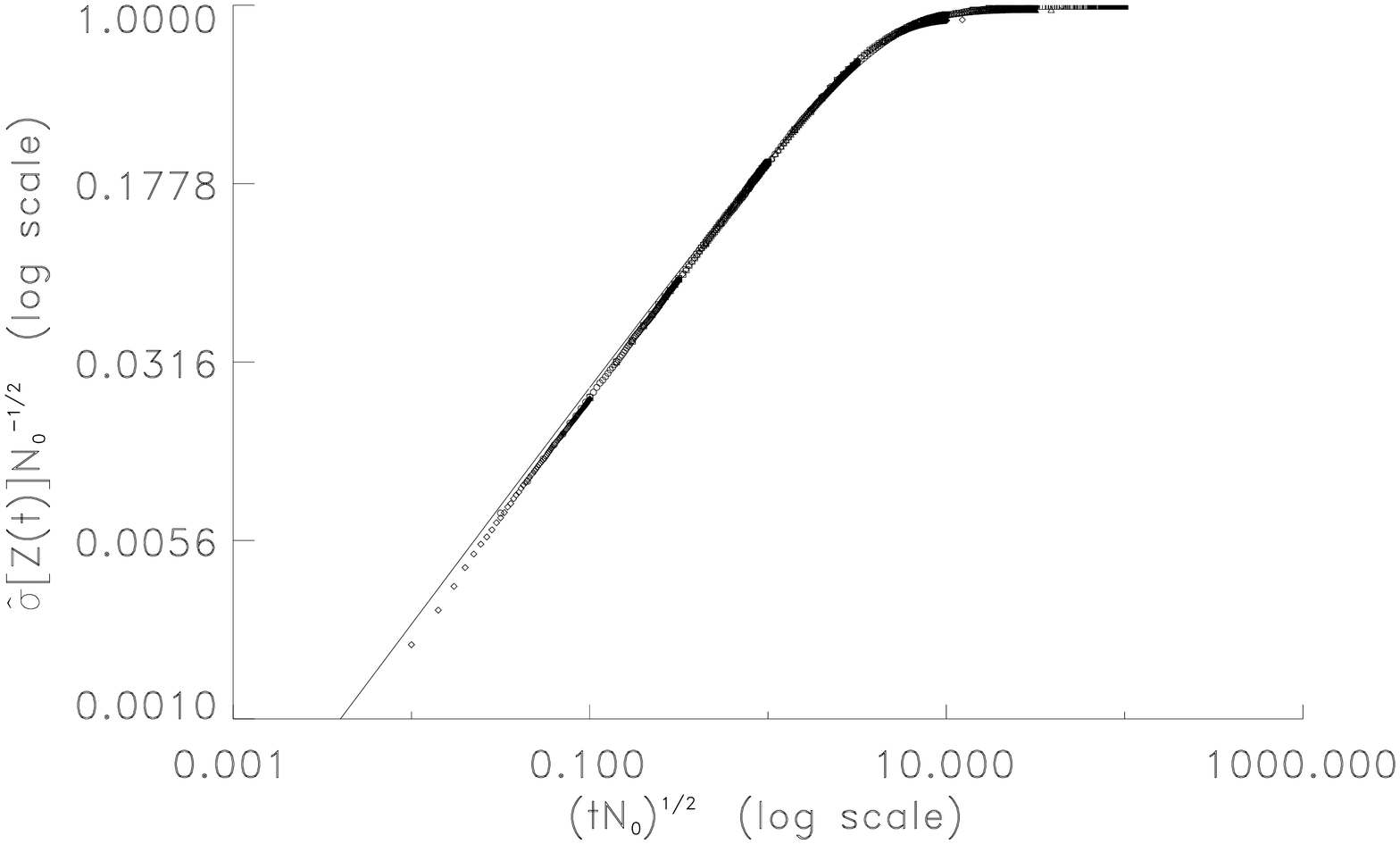}
\caption{Time evolution of the r.~m.~s. $\hat \sigma[Z(t)]$ of the
variable $Z(t)=N_+(t)-N_-(t)$ as a function of time for different values of $N_0$. In the top four panels, we plot with symbols the simulation
results and the line is the empirical function given by
Eq.~(\ref{eq:empirical}) with $(\beta,c)=(1.12,0.58),
(3.16,0.54), (9.17,0.52), (27.0,0.50)$ for
$N_0=10,10^2,10^3,10^4$, respectively. This suggests $c=1/2$ and
$\beta=a\sqrt{ N_0}$, or $ \hat
\sigma[Z(t)]=\sqrt{4p_0(1-p_0)N_0}(1-e^{-a\sqrt{N_0t}})$. The horizontal dotted
lines are the (observed) values for $\hat\sigma[Z_\infty]$. In the bottom panel we replot the data for all values of $N_0$ (same symbols) using the rescaling indicated by Eq.(\ref{fluctuations}). The solid line is the curve $1-e^{-0.25 x}$,
\label{fig7}}
\end{figure}

Finally, we have found numerical evidence indicating that the time dependence of the
fluctuations can be fitted to the form
\begin{equation}
\hat \sigma[Z(t)]=\hat\sigma[Z_\infty](1-e^{-\beta t^c}).
\label{eq:empirical}
\end{equation}
The constants $\beta$ and $c$ depend on $N_0$ and have been fitted using the results of
the simulations, see caption of Fig.~\ref{fig7} for their numerical values  for $p_0=1/2$.
These results suggest the following functional form
\begin{equation}\label{fluctuations}
\hat \sigma[Z(t)]=\sqrt{4p_0(1-p_0)N_0}\left(1-e^{-a\sqrt{N_0t}}\right).
\end{equation}
This expression has been checked in the case $p_0=1/2$ by plotting $\hat \sigma[Z(t)] N_0^{-1/2}$ vs $\sqrt{N_0t}$, see bottom panel of  Fig.\ref{fig7}, with a value $a\sim0.25$.

\subsection{Connection with theoretical results}

It is obvious that the results of the simulations go much further
beyond those of the kinetic theory. Yet, we have found agreement in
two theoretical predictions: 1) the conservation in time of
$\left\langle Z(t) \right\rangle$ and 2) the trend towards the
formation of a cluster containing all particles in the system for
$p_0=1/2$.

It is difficult to compare many more predictions apart from these
two, because the kinetic approach neglects many sources of
fluctuations by its very nature, while the simulations retain all of
them. Anyway, we have found result~(\ref{fluctuations}) especially
interesting and, although a direct comparison with the theory is not
possible, we will offer a possible theoretical explanation of it
using quantities that can theoretically accessed.

Eq.~(\ref{fluctuations}) can be expanded around $t=0$ to find
\begin{equation}
\hat \sigma[Z(t)] = a  N_0 \sqrt{t}.
\end{equation}
This suggests the random variable $Z(t)$, in the limit $t \to 0^+$,
performs a random walk with zero mean and diffusion constant $a^2
N_0^2/\approx N_0/16$. Then the diffusive behavior is modified due
to saturation effects as the system approaches the ordered state
giving rise to the behavior described in~(\ref{fluctuations}).

Now we will calculate a couple of quantities that might result of
interest to interpret the value of the above found diffusion
constant. From now on the condition $N^+(0)=N^-(0)$ on the initial
condition is imposed and all the clusters are initially of size
$\ell=1$ (same conditions as in the simulations). First of all we
consider the mean cluster size of all clusters traveling in the $+$
direction (identical formulas hold for the other direction), which
is given by the formula
\begin{equation}
\frac{M^+(t)}{N^+(t)}= 1 + \frac{N_0}{4}t.
\end{equation}
The second quantity of interest is the average size of the cluster
the particles belong to, again for the $+$ direction. It is given by
the formula
\begin{equation}
\frac{\sum_{\ell=1}^\infty \ell^2 f^+(\ell,t)}{M^+(t)}= 1 +
\frac{N_0}{2}t.
\end{equation}
In both cases the calculations are performed following the
techniques introduced in section~\ref{introduction}. The second
result means the particles are progressively located in clusters of
bigger size, and per unit of time this size increases in $N_0/2$ in
average. This factor describes the average number of particles
traveling in the opposite direction, and correspondingly it is the
number of possible collisions for a given particle. The first result
indicates how the mean cluster size grows in time. Per unit time
this size increases by a factor $(1/2)\times(N_0/2)$. Again we find that the mean
cluster size increases with a velocity proportional to the
number of possible collisions, but this time the proportionality
factor $1/2$ signals that after every collision one cluster
disappears and the newborn cluster chooses its direction of motion
randomly.

In view of these results it is tempting to interpret the value of
the diffusion constant $(N_0/4)\times(N_0/4)$ as the square of the
rate at which the mean cluster size increases.

\section*{Acknowledgments}

CE is grateful to the IFISC for its hospitality.
RT acknowledges financial support from MINECO (Spain), Comunitat Aut\`onoma de les Illes Balears, FEDER, and the European Commission
under project FIS2012-30634.

\medskip
Received xxxx 20xx; revised xxxx 20xx.
\medskip


\begin{thebibliography}{99}

\bibitem{arfken} G. Arfken, ``Mathematical Methods for Physicists'',
3rd edition, Academic Press, Orlando, 1985.

\bibitem{ball} J. M. Ball and J. Carr,
\textit{The discrete coagulation-fragmentation equations: existence,
uniqueness, and density conservation}, J. Stat. Phys. {\bf 61} (1990),
203-234.

\bibitem{bodnar} M. Bodnar and J. J. L. Vel\'{a}zquez,
\textit{An integro-differential equation arising as a limit of
individual cell-based models}, J. Diff. Eqs. \textbf{222} (2006), 341-380.

\bibitem{buhl} J. Buhl, D. J. T. Sumpter, I. D. Couzin, J. J. Hale, E.
Despland, E. R. Miller and S. J. Simpson, \textit{From disorder to
order in marching locusts}, Science \textbf{312} (2006), 1402-1406.

\bibitem{orsogna} J. A. Carrillo, M. R. D'Orsogna and V. Panferov,
\textit{Double milling in self-propelled swarms from kinetic
theory}, Kin. Rel. Mod. \textbf{2} (2009), 363-378.

\bibitem{fortunato} C. Castellano, S. Fortunato and V.
Loreto, \textit{Statistical physics of social dynamics}, Rev. Mod.
Phys. \textbf{81} (2009), 591-646.

\bibitem{voter1} P. Clifford and A. Sudbury, \textit{A model for spatial conflict}, Biometrika \textbf{60} (1973),
581-588.

\bibitem{meerson} M. Conti, B. Meerson, A. Peleg and P. V. Sasorov,
\textit{Phase ordering with a global conservation law: Ostwald
ripening and coalescence}, Phys. Rev. E \textbf{65} (2002), 046117.

\bibitem{omega} R. M. Corless, G. H. Gonnet, D. E. G. Hare, D. J. Jeffrey and D. E. Knuth,
\textit{On the Lambert W function}, Adv. Comput. Math. {\bf 5} (1996),
329-359.

\bibitem{vicsek} A. Czir\'{o}k, A.-L. Barab\'asi and T. Vicsek,
\textit{Collective motion of self-propelled particles: kinetic phase
transition in one dimension}, Phys. Rev. Lett. \textbf{82} (1999), 209-212.

\bibitem{bertozzi} M. R. D'Orsogna, Y. L. Chuang, A. L. Bertozzi and L. S.
Chayes, \textit{Self-propelled particles with soft-core
interactions: patterns, stability and collapse}, Phys. Rev. Lett.
\textbf{96} (2006), 104302.

\bibitem{deffuant} G. Deffuant, D. Neu, F. Amblard and G. Weisbuch,
\textit{Mixing beliefs among interacting
agents}, Adv. Complex Syst. \textbf{3} (2000), 87-98.

\bibitem{emv} C. Escudero, F. Maci\`{a} and J. J. L. Vel'{a}zquez,
\textit{Two-species coagulation approach to consensus by group level
interactions}, Phys. Rev. E {\bf 82} (2010), 016113.

\bibitem{escudero} C. Escudero, C. A. Yates, J. Buhl, I. D. Couzin, R. Erban, I. G. Kevrekidis and P. K. Maini,
\textit{Ergodic directional switching in mobile insect groups},
Phys. Rev. E {\bf 82} (2010), 011926.

\bibitem{munoz} O. Al Hammal, H. Chat\'{e}, I. Dornic and M. A.
Mu\~{n}oz, \textit{Langevin description of critical phenomena with
two symmetric absorbing states}, Phys. Rev. Lett. \textbf{94} (2005),
230601.

\bibitem{emilio} E. Hern\'andez-Garc\'{\i}a and C. L\'opez,
\textit{Clustering, advection and patterns in a model of population
dynamics}, Phys. Rev. E \textbf{70} (2004), 016216.

\bibitem{voter2} R. A. Holley and T. M. Liggett,
\textit{Ergodic theorems for weakly interacting infinite systems and
the voter model}, Ann. Probab. \textbf{3} (1975), 643-663.

\bibitem{clusterswarm} C. Huepe and M. Aldana,
\textit{Intermittency and clustering in a system of self-driven
particles}, Phys. Rev. Lett. \textbf{92} (2004), 168701.

\bibitem{tree} S. Janson, D. E. Knuth, T. Luczak and B. Pittel,
\textit{The birth of the giant component}, Rand. Struct. Alg. {\bf
4} (1993), 233-358.

\bibitem{penrose} M. Kreer and O. Penrose,
\textit{Proof of dynamical scaling in Smoluchowski's coagulation
equation with constant kernel}, J. Stat. Phys. {\bf 75} (1994), 389-407.

\bibitem{ls} I. M. Lifshitz and V. V. Slyozov,
\textit{The kinetics of precipitation from supersaturated solid
solutions}, J. Phys. Chem. Solids \textbf{19} (1961), 35-50.

\bibitem{liggett} T. M. Liggett, ``Interacting Particle Systems'',
Springer-Verlag, New York, 1985.

\bibitem{mcleod} J. B. McLeod, \textit{On the scalar transport equation},
Proc. London Math. Soc. \textbf{14} (1964), 445-458.

\bibitem{menon} G. Menon and R. L. Pego,
\textit{Approach to self-similarity in Smoluchowski's coagulation
equations}, Commun. Pure Appl. Math. \textbf{57} (2004), 1197-1232.

\bibitem{niwa} H. S. Niwa, \textit{School size statistics of fish},
J. Theor. Biol. \textbf{195} (1998), 351-361.

\bibitem{coaguswarm} F. Peruani, A. Deutsch and M. B\"{a}r,
\textit{Nonequilibrium clustering of self-propelled rods}, Phys.
Rev. E \textbf{74} (2006), 030904(R).

\bibitem{toral2} M. Pineda, R. Toral and E. Hern\'andez-Garc\'{\i}a,
\textit{Noisy continuous-opinion dynamics}, J. Stat. Mech. P08001
(2009).

\bibitem{seinfeld} J. Seinfeld, ``Atmospheric Chemistry and Physics of
Air Polution'', Wiley, New York, 1986.

\bibitem{silk} J. Silk and S. D. White,
\textit{The development of structure in the expanding universe},
Astrophys. J. \textbf{223} (1978), L59-L62.

\bibitem{sintes} T. Sintes, R. Toral and A. Chakrabarti,
\textit{Reversible aggregation in self-associating polymer systems}, Phys. Rev. E \textbf{50} (1994),
2967-2976.

\bibitem{toralmarro} R. Toral and J. Marro,
\textit{Cluster kinetics in the lattice gas model: the Becker-Doring
type of equations}, J. Phys. C: Solid State Phys. \textbf{20} (1987),
2491-2500.

\bibitem{toral} R. Toral and C. J. Tessone,
\textit{Finite size effects in the dynamics of opinion formation},
Commun. Comput. Phys. \textbf{2} (2007), 177-195.

\bibitem{lopez} F. V\'{a}zquez and C. L\'{o}pez,
\textit{Systems with two symmetric absorbing states: relating the
microscopic dynamics with the macroscopic behavior}, Phys. Rev. E
\textbf{78} (2008), 061127.

\bibitem{yates} C. A. Yates, R. Erban, C. Escudero, I. D. Couzin,
J. Buhl, I. G. Kevrekidis, P. K. Maini and D. J. T. Sumpter,
\textit{Inherent noise can facilitate coherence in collective swarm
motion}, Proc. Nat. Acad. Sci. USA \textbf{106} (2009), 5464-5469.

\bibitem{ziff} R. M. Ziff, \textit{Kinetics of polymerization}, J. Stat. Phys. \textbf{23} (1980), 241-263.

\end{thebibliography}
\end{document}